\title{\textbf{Characterizing volume via cone duality}}
\author{\textsc{Jian Xiao}}
\date{}
\begin{document}
\maketitle

\theoremstyle{definition}
\newtheorem*{pf}{Proof}
\newtheorem{theorem}{Theorem}[section]
\newtheorem{remark}{Remark}[section]
\newtheorem{problem}{Problem}[section]
\newtheorem{conjecture}{Conjecture}[section]
\newtheorem{lemma}{Lemma}[section]
\newtheorem{corollary}{Corollary}[section]
\newtheorem{definition}{Definition}[section]
\newtheorem{proposition}{Proposition}[section]
\newtheorem{example}{Example}[section]

\begin{abstract}
For divisors over smooth projective varieties, we show that the volume can be characterized by the duality between pseudo-effective cone of divisors and movable cone of
curves. Inspired by this result, we give and study a natural intersection-theoretic volume functional for 1-cycles over compact K\"ahler manifolds.
In particular, for numerical equivalence classes of curves over projective varieties, it is closely related to the mobility functional.
\end{abstract}

\tableofcontents

\section{Introduction}
In this paper, all projective varieties are defined over $\mathbb{C}$. The volume of a divisor on projective variety is a non-negative number measuring the positivity of the divisor. Let $X$ be an $n$-dimensional smooth projective variety, and let $D$ be a divisor on $X$. By definition, the volume of $D$ is defined to be
$$vol(D):= \underset{m\rightarrow \infty}{\limsup}{\frac{h^0 (X, mD)}{m^n /n!}}.$$
Thus $vol(D)$ measures the asymptotic growth of the dimensions of the section space of multiplied divisors $mD$. We call $D$ a big divisor if $h^0 (X, mD)$ has growth of order $m^n$ as $m$ tends to infinity, that is, $D$ is big if and only if $vol(D)>0$. The pseudo-effective cone of divisors (denoted by $\mathcal{E}_{NS}$) is the closure of the cone generated by numerical classes of big divisors. It contains the cone of ample divisors as a subcone. It is well known that the volume $vol$ depends only on the numerical class of the divisor, and $vol^{1/n}$ is homogeneous of degree one, concave on the pseudo-effective cone and extends to a continuous function on the whole real N\'{e}ron-Severi space which is strictly positive exactly on big classes.

In the analytical context,
from the work \cite{Bou02a,Bou02b}, we know that the volume can be characterized by Monge-Amp\`{e}re mass, and from the work \cite{Dem10}, it can even be characterized by
Morse type integrals.
In this paper, the starting point is to give a new characterization of the volume of divisors by using cone duality. From the seminal work of Boucksom-Demailly-Paun-Peternell (see \cite{BDPP13}), we know the duality of the pseudo-effective cone of divisors and the cone generated by movable curves, that is,
$\mathcal{E}_{NS} ^{\vee} = \overline{\mathcal{M}}_{NS}$.
Using this cone duality and an invariant of movable curve, we give the following new volume characterization of divisors by the infimum of intersection
numbers between the pairings of $\mathcal{E}_{NS}$ and ${\mathcal{M}}_{NS}$.

\begin{theorem}
%(see Theorem \ref{thm characterize vol of divisor})
\label{thm divisor volume}
Let $X$ be an $n$-dimensional smooth projective variety and
let $\alpha\in N^1(X, \mathbb{R})$ be a numerical class of divisor. Then the volume of $\alpha$ can be
characterized as following:
\begin{equation*}
vol(\alpha) = \underset
{{\gamma \in {\mathcal{M}}_{NS,1}} }{inf}
 max (\langle \alpha, \gamma \rangle , 0)^n
\end{equation*}
where
${\mathcal{M}}_{NS,1}$ is a subset of the movable cone ${\mathcal{M}}_{NS}$ (see Definition \ref{def vol hat 1}).
Conversely, this volume characterization implies
the cone duality
$\mathcal{E}_{NS} ^{\vee} = \overline{\mathcal{M}}_{NS}$.
Furthermore, we can also replace the movable cone ${\mathcal{M}}_{NS}$ by the Gauduchon cone $\mathcal{G}$ or balanced cone $\mathcal{B}$ which is generated by special hermitian metrics.
\end{theorem}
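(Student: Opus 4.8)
The plan is to reduce everything to the interior case in which $\alpha$ is big, and then extend to the whole of $N^1(X,\mathbb{R})$ by continuity, since $vol^{1/n}$ is continuous and vanishes off the big cone. The two ingredients I would use are the fundamental identity $\langle\alpha^{n-1}\rangle\cdot\alpha = vol(\alpha)$ together with the differentiability of $vol$ on the big cone, which realizes the normalized positive product as an honest movable class, and the dual invariant $\widehat{vol}$ of Definition~\ref{def vol hat 1}, for which I take $\mathcal{M}_{NS,1}=\{\widehat{vol}=1\}$. For an arbitrary $\gamma\in\mathcal{M}_{NS,1}$ the bound
\[
\langle\alpha,\gamma\rangle \ \ge\ vol(\alpha)^{1/n}\,\widehat{vol}(\gamma)^{(n-1)/n} \ =\ vol(\alpha)^{1/n}
\]
is built into $\widehat{vol}$ as an infimum over big classes (take the competitor $\alpha$ itself), whence $\max(\langle\alpha,\gamma\rangle,0)^n\ge vol(\alpha)$ and the right-hand infimum is $\ge vol(\alpha)$.

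For the reverse inequality I would exhibit a minimizer. Setting $\gamma_0=\langle\alpha^{n-1}\rangle/vol(\alpha)^{(n-1)/n}$, the homogeneity of $\widehat{vol}$ (degree $n/(n-1)$) and the normalization $\widehat{vol}(\langle\alpha^{n-1}\rangle)=vol(\alpha)$ give $\gamma_0\in\mathcal{M}_{NS,1}$, while $\langle\alpha,\gamma_0\rangle=vol(\alpha)/vol(\alpha)^{(n-1)/n}=vol(\alpha)^{1/n}$, so $\max(\langle\alpha,\gamma_0\rangle,0)^n=vol(\alpha)$. The non-trivial point here is precisely that the infimum defining $\widehat{vol}(\langle\alpha^{n-1}\rangle)$ is \emph{attained} by $\alpha$, i.e.\ the sharp Diskant/reverse Khovanskii--Teissier equality case for the positive product; this, and the differentiability of $vol$, are the genuine analytic inputs. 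Combining both directions gives the formula on the big cone, and continuity extends it everywhere.

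For the converse, assume the formula. If $vol(\alpha)>0$ then every term in the infimum is positive, so $\langle\alpha,\gamma\rangle>0$ for all $\gamma\in\mathcal{M}_{NS,1}$. Conversely, suppose $\alpha$ lies in the open dual cone $\mathrm{int}\,(\overline{\mathcal{M}}_{NS}^{\vee})$, i.e.\ $\langle\alpha,\gamma\rangle>0$ for every nonzero movable class; by compactness of the unit sphere in $\overline{\mathcal{M}}_{NS}$ there is $c>0$ with $\langle\alpha,\gamma\rangle\ge c\|\gamma\|$, and since $\widehat{vol}$ is bounded on that sphere and homogeneous of degree $n/(n-1)>1$, the normalized classes of $\mathcal{M}_{NS,1}$ satisfy $\|\gamma\|\ge r_0>0$. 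Hence the infimum computing $vol(\alpha)$ is at least $(cr_0)^n>0$, so $\alpha$ is big. Thus $\mathrm{int}\,\mathcal{E}_{NS}=\mathrm{int}\,(\overline{\mathcal{M}}_{NS}^{\vee})$; taking closures of these convex cones and dualizing yields $\mathcal{E}_{NS}^{\vee}=\overline{\mathcal{M}}_{NS}$, recovering the duality of \cite{BDPP13}.

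Finally, to replace $\mathcal{M}_{NS}$ by the Gauduchon cone $\mathcal{G}$ or the balanced cone $\mathcal{B}$, I would run the same two-step scheme in the transcendental setting, substituting the analytic descriptions of $vol$ by Monge--Amp\`ere masses and Morse-type integrals (\cite{Bou02a,Bou02b,Dem10}) for the algebraic identities: the normalized positive product of a big class should be represented by $[\omega^{n-1}]$ for a suitable Gauduchon (resp.\ balanced) metric $\omega$, and the Diskant-type bound is read off from the mass of $\omega^{n-1}$ against $\alpha$. I expect the main obstacle to lie exactly here: in the hermitian setting one lacks Fujita approximation and a full Zariski-decomposition/positive-product calculus, so securing the differentiability of $vol$, the identification of its gradient with a genuine Gauduchon/balanced class, the corresponding cone duality $\mathcal{E}^{\vee}=\overline{\mathcal{G}}$ (resp.\ $\overline{\mathcal{B}}$), and the precise normalizations defining $\mathcal{G}_1,\mathcal{B}_1$ are the delicate steps that must be established before the formula transfers verbatim.
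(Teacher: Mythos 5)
Your argument for the main formula is correct but takes a genuinely different route from the paper's at the key step. For the lower bound $\langle\alpha,\gamma\rangle\ge vol(\alpha)^{1/n}$ on $\mathcal{M}_{NS,1}$ you and the paper agree: it is immediate from the definition of the normalizing invariant ($\mathfrak{M}$ of Definition \ref{def hat vol movable}, which is what Definition \ref{def vol hat 1} uses; note it is an infimum over \emph{pseudo-effective} classes of volume one, not over big classes, though this changes nothing). For the upper bound the paper does \emph{not} produce an exact minimizer: it takes the Fujita approximations $\mu_\delta^*\alpha=\beta_\delta+[E_\delta]$ of \cite{BDPP13}, and combines the orthogonality estimate $\langle[E_\delta],\beta_\delta^{n-1}\rangle=\mathbf{O}(\delta^{1/2})$ with $\mathfrak{M}(\beta_\delta^{n-1})=vol(\beta_\delta)$ (Proposition \ref{prop vol hat coin kahler}, proved via the singular Calabi--Yau theorem) and the push-forward monotonicity $\mathfrak{M}(\mu_{\delta*}\beta_\delta^{n-1})\ge\mathfrak{M}(\beta_\delta^{n-1})$ (Proposition \ref{prop modification vol_ G B}) to exhibit near-minimizers $\gamma_\delta$. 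You instead invoke the positive intersection product and the differentiability of $vol$ (Boucksom--Favre--Jonsson) to get $\langle\alpha^{n-1}\rangle\cdot\alpha=vol(\alpha)$ and $\mathfrak{M}(\langle\alpha^{n-1}\rangle)=vol(\alpha)$, producing an exact minimizer $\gamma_0$. This is legitimate and arguably cleaner, but it imports strictly more machinery (differentiability of the volume and the Khovanskii--Teissier inequality for positive products, both themselves consequences of the same orthogonality estimates), and you must still check that $\langle\alpha^{n-1}\rangle$ lies in $\mathcal{M}_{NS}$ itself, not merely in its closure, so that $\gamma_0\in\mathcal{M}_{NS,1}$; this holds because it dominates some $\mu_{\delta*}(\beta_\delta^{n-1})$, which pairs strictly positively with every nonzero pseudo-effective class. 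Your converse argument (compactness on the unit sphere of $\overline{\mathcal{M}}_{NS}$) is a fine variant of the paper's, which instead tests $(\star)$ against $\alpha-\varepsilon A$.

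The one genuine gap is the Gauduchon/balanced statement, where you only list obstacles and mis-locate the difficulty. Since $\alpha$ is still a divisor class on a projective variety, no transcendental Fujita approximation or hermitian volume calculus is needed: the paper simply shows (Propositions \ref{prop smoothing gauduchon} and \ref{prop smoothing balanced}, via Lamari-type duality and the Alessandrini--Bassanelli pull-back lemma) that the push-forward $\mu_{\delta*}(\beta_\delta^{n-1})$ of a power of a K\"ahler class is an \emph{interior} point of the Gauduchon (resp.\ balanced) cone, hence is represented by $G^{n-1}$ for a Gauduchon (resp.\ balanced) metric $G$ in the same Aeppli (resp.\ Bott--Chern) class. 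The same near-minimizers then work verbatim for $\mathcal{G}_1$ and $\mathcal{B}_1$, and the lower bound is unchanged because $\mathfrak{M}$ is defined identically on these cones. Your proposal as written does not supply this step.
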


\begin{remark}
\label{rmk vol kahler}
Under the conjecture on weak transcendental holomorphic Morse inequalities (see \cite{BDPP13}), the above result also holds true for any Bott-Chern $(1,1)$-class over compact K\"ahler manifolds. In particular, even without this assumption, for any $\alpha\in H^{1,1}_{BC}(X, \mathbb{R})$ over a hyper-K\"ahler manifold $X$,
we have $$vol(\alpha) = \underset
{{\gamma \in {\mathcal{M}}_1} }{inf}
 max (\langle \alpha, \gamma \rangle , 0)^n.$$
\end{remark}

Inspired by the above volume characterization for divisors, using cone dualities, we introduce a volume functional for 1-cycles over compact K\"ahler manifolds.
For smooth projective variety, by Kleiman's criterion, we have the cone duality $Amp^{\vee}=\overline{NE}$ where $Amp$ is the ample cone generated by ample divisors and $\overline{NE}$ is the cone generated by irreducible curves. For $n$-dimensional compact K\"ahler manifold, by Demailly-Paun's numerical characterization of K\"ahler cone (see \cite{DP04}), we have the cone duality $\mathcal{K}^{\vee}=\mathcal{N}$ where $\mathcal{K}$ is the K\"ahler cone generated by K\"ahler classes and $\mathcal{N}$ is the cone  generated by $d$-closed positive $(n-1,n-1)$-currents.

\begin{definition}
(1) Let $X$ be an $n$-dimensional smooth projective variety, and let $\gamma\in N_1(X, \mathbb{R})$ be a numerical equivalence class of curve. Let $Amp_1$ be the set containing all numerical classes of ample divisors of volume one. Then the volume of $\gamma$ is defined to be $$\widehat{vol}_{\overline{NE}} (\gamma) =\underset
{\beta \in Amp_1}{inf} max (\langle \beta, \gamma \rangle, 0)^{\frac{n}{n-1}}.$$
\\
(2) Let $X$ be an $n$-dimensional compact K\"ahler manifold, and let $\gamma\in H^{n-1, n-1}_{BC}(X, \mathbb{R})$ be a Bott-Chern $(n-1,n-1)$-class. Let $\mathcal{K}_1$ be the set containing all K\"ahler classes of volume one. Then the volume of $\gamma$ is defined to be $$\widehat{vol}_{\mathcal{N}} (\gamma) =\underset
{\gamma \in {\mathcal{K}_1}} {inf} max (\langle \beta, \gamma \rangle,0)^{\frac{n}{n-1}}.$$
\end{definition}

From its definition, it is clear $\widehat{vol}_{\overline{NE}}$ (resp. $\widehat{vol}_{\mathcal{N}}$) has concave property. It also has other nice properties.

\begin{theorem}
\label{theorem vol_N}
Let $X$ be an $n$-dimensional smooth projective variety (resp. compact K\"ahler manifold). Then
$\widehat{vol}_{\overline{NE}}$ (resp. $\widehat{vol}_{\mathcal{N}}$) is a continuous function on the whole vector space $N_1(X, \mathbb{R})$ (resp. $H^{n-1,n-1}_{BC}(X, \mathbb{R})$). Furthermore, $\gamma \in \overline{NE}^{\circ}$ (resp. $\mathcal{N} ^\circ$)
if and only if
 $\widehat{vol}_{\overline{NE}}(\gamma)>0$ (resp. $\widehat{vol}_{\mathcal{N}}(\gamma)>0$).
\end{theorem}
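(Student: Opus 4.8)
The plan is to treat the projective functional $\widehat{vol}_{\overline{NE}}$ in detail; the K\"ahler statement for $\widehat{vol}_{\mathcal{N}}$ is entirely parallel, with Kleiman's criterion and the BDPP duality replaced by Demailly--Paun's duality $\mathcal{K}^\vee=\mathcal{N}$. Fix a reference ample (resp. K\"ahler) class $\omega$, and write $g_\beta(\gamma)=\max(\langle\beta,\gamma\rangle,0)^{n/(n-1)}$, so that $\widehat{vol}_{\overline{NE}}(\gamma)=\inf_{\beta\in Amp_1}g_\beta(\gamma)$. Since each $g_\beta$ is continuous, $\widehat{vol}_{\overline{NE}}$ is automatically upper semicontinuous and non-negative on all of $N_1(X,\mathbb{R})$; it is finite, being bounded above by $g_{\beta_0}(\gamma)$ for any fixed $\beta_0\in Amp_1$. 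I would analyse the function on the three regions cut out by the closed cone $\overline{NE}$ --- its interior, exterior, and boundary --- using repeatedly that, by Kleiman, $\overline{NE}$ and the nef cone $Nef=\overline{Amp}$ are dual, so an ample class pairs strictly positively with every nonzero class of $\overline{NE}$.

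On the interior $\overline{NE}^{\circ}$ every pairing $\langle\beta,\gamma\rangle$ with $\beta\in Amp_1\subset Nef\setminus\{0\}$ is strictly positive, so the cut-off is inactive and $\widehat{vol}_{\overline{NE}}=f^{n/(n-1)}$ with $f(\gamma)=\inf_{\beta\in Amp_1}\langle\beta,\gamma\rangle$. First I would prove strict positivity: since $\gamma$ is interior, $\langle\cdot,\gamma\rangle$ attains a positive minimum $c_1$ on the compact slice $\{\beta\in Nef:\|\beta\|=1\}$, whence $\langle\beta,\gamma\rangle\ge c_1\|\beta\|$ for all nef $\beta$; and as $\beta^n=vol(\beta)=1$ forces $\|\beta\|\ge c_0>0$ on $Amp_1$ (the intersection form being bounded), we get $f(\gamma)\ge c_1c_0>0$. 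For continuity on $\overline{NE}^{\circ}$, observe that $f$, being an infimum of linear functionals, is concave and finite on this open convex set, hence automatically continuous there; composing with $t\mapsto t^{n/(n-1)}$ gives continuity of $\widehat{vol}_{\overline{NE}}$ on the interior.

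If $\gamma\notin\overline{NE}$, then by duality some nef, hence (by density of ample classes in $Nef$ and continuity of the pairing) some ample class pairs negatively with $\gamma$; rescaling to volume one exhibits $\beta\in Amp_1$ with $\langle\beta,\gamma\rangle<0$, so $\widehat{vol}_{\overline{NE}}(\gamma)=0$, and this persists on a neighbourhood, giving continuity on the open exterior. The essential point is the boundary: for $\gamma\in\partial\overline{NE}$ I would take a supporting functional, a nonzero $\beta_0\in Nef$ with $\langle\beta_0,\gamma\rangle=0$, and test against the ample classes $\beta_0+\epsilon\omega$ normalised to volume one, $\tilde\beta_\epsilon=v_\epsilon^{-1/n}(\beta_0+\epsilon\omega)$ with $v_\epsilon=(\beta_0+\epsilon\omega)^n$. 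Then $\langle\tilde\beta_\epsilon,\gamma\rangle=v_\epsilon^{-1/n}\epsilon\langle\omega,\gamma\rangle$, so $\widehat{vol}_{\overline{NE}}(\gamma)=0$ reduces to $v_\epsilon^{-1/n}\epsilon\to0$, i.e.\ $v_\epsilon/\epsilon^n\to\infty$. Expanding $v_\epsilon=\sum_k\binom{n}{k}\epsilon^k\,\beta_0^{n-k}\cdot\omega^k$, this is where the difficulty lies: if $\beta_0$ is nef but not big the leading term $\beta_0^n$ vanishes, and one must exclude the degenerate scenario that all lower intersection numbers $\beta_0^{n-k}\cdot\omega^k$ vanish too. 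The way around this is $\beta_0\cdot\omega^{n-1}>0$: indeed $\omega^{n-1}$ is an interior class of the movable cone of curves (a complete-intersection curve), so by BDPP duality it pairs strictly positively with the nonzero pseudo-effective --- in particular nonzero nef --- class $\beta_0$. Hence the $k=n-1$ term of $v_\epsilon$ is already positive, forcing $v_\epsilon/\epsilon^n\to\infty$ and $\langle\tilde\beta_\epsilon,\gamma\rangle\to0$; as all pairings over $Amp_1$ are non-negative on $\overline{NE}$, this gives $\widehat{vol}_{\overline{NE}}(\gamma)=0$.

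Continuity at a boundary point then follows formally: upper semicontinuity gives $\limsup\widehat{vol}_{\overline{NE}}(\gamma_j)\le\widehat{vol}_{\overline{NE}}(\gamma)=0$ for $\gamma_j\to\gamma$, while non-negativity gives $\liminf\ge0$, so the limit is $0$. Combining the three regions yields continuity on all of $N_1(X,\mathbb{R})$, and the above shows $\widehat{vol}_{\overline{NE}}(\gamma)>0$ precisely on $\overline{NE}^{\circ}$. The K\"ahler case is identical once $\overline{NE}$, $Nef$, $Amp_1$ and BDPP are replaced by $\mathcal{N}$, $\overline{\mathcal{K}}$, $\mathcal{K}_1$ and the Demailly--Paun duality $\mathcal{K}^\vee=\mathcal{N}$, the one substantive input again being $\beta_0\cdot\omega^{n-1}>0$ for nonzero nef $\beta_0$, which holds because $\omega^{n-1}$ is interior to $\mathcal{N}$. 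I expect the boundary vanishing --- concretely, the positivity $\beta_0\cdot\omega^{n-1}>0$ and the resulting blow-up $v_\epsilon/\epsilon^n\to\infty$ --- to be the only real obstacle; everything else is soft convexity and semicontinuity.
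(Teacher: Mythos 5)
Your proposal is correct, and its central step --- the vanishing of the volume functional on $\partial\overline{NE}$ (resp.\ $\partial\mathcal{N}$) --- is exactly the paper's argument: pick a supporting nef class $\beta_0$ with $\langle\beta_0,\gamma\rangle=0$, test against the normalized perturbations $(\beta_0+\epsilon\omega)/vol(\beta_0+\epsilon\omega)^{1/n}$, and use $\langle\beta_0,\omega^{n-1}\rangle>0$ to force $vol(\beta_0+\epsilon\omega)\geq n\epsilon^{n-1}\langle\beta_0,\omega^{n-1}\rangle$ and hence a pairing of order $\epsilon^{1/n}$. (The paper justifies $\langle\beta_0,\omega^{n-1}\rangle>0$ by noting that a nonzero nef class contains a nonzero positive current, rather than by your appeal to BDPP/Demailly--Paun; both are fine, and yours is if anything the heavier tool.) You diverge in two sub-arguments, both legitimately. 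For positivity on the interior, the paper writes $\gamma-\omega^{n-1}\in\mathcal{N}$ and proves $\widehat{vol}_{\mathcal{N}}(\omega^{n-1})=vol(\omega)$ via the singular Calabi--Yau theorem (the Khovanskii--Teissier argument of Proposition \ref{prop vol hat coin kahler}); your compactness argument on the unit slice of the nef cone is more elementary and avoids that analytic input, though it gives only some positive lower bound rather than the clean identity $\widehat{vol}(\omega^{n-1})=vol(\omega)$, which the paper also wants for its own sake. For continuity at boundary points, your observation that an infimum of continuous functions is upper semicontinuous, so $\limsup\widehat{vol}(\gamma_j)\leq\widehat{vol}(\gamma)=0$, is a genuine shortcut: the paper instead proves the quantitative estimate $\widehat{vol}_{\mathcal{N}}(\gamma+\varepsilon\omega^{n-1})\leq \mathbf{O}(\varepsilon^{\frac{1}{n-1}})$ by optimizing the auxiliary parameter $\delta=\varepsilon$. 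Be aware that this explicit rate is not wasted effort there --- it is reused verbatim in the proof of Theorem \ref{thm compare mob and vol_NE} to compare with the mobility functional --- so your softer route suffices for the present theorem but does not supply that later ingredient.
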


The functional $\widehat{vol}_{\overline{NE}}$ is closely related to the mobility functional recently introduced by Lehmann (see \cite {Leh13}).
Mobility functional for cycles was suggested in \cite{DELV11} as an analogue of the volume function for divisors. The motivation is that one can interpret the volume of a divisor $D$ as an asymptotic measurement of the number of general points contained in members of $|mD|$ as $m$ tends to infinity. Let $\gamma$ be a numerical equivalence class of $k$-cycles over an $n$-dimensional integral projective variety $X$, following \cite{DELV11}, Lehmann defined the mobility of $\gamma$ as following:
$$ mob(\gamma):= \underset{m\rightarrow \infty}{\limsup} \frac{mc(m\gamma)}{m^{\frac{n}{n-k}}/n!},$$
where $mc(m\gamma)$ is the mobility count of the cycle class $m\gamma$, which is the maximal non-negative integer $b$ such that any $b$ general points of $X$ are contained in a cycle of class $m\gamma$.
In particular, we can define the mobility for numerical classes of curves. Lehmann proved that the mobility functional also distinguishes
interior points and boundary points. Thus, in the situation of curves, combining with Theorem \ref{theorem vol_N}, we have two functionals with this property. It is interesting to compare $mob$ and $\widehat{vol}_{\overline{NE}}$ over $\overline{NE}$. The optimistical expectation is that there are two positive constants $c_1, c_2$ depending only on the dimension of the underlying manifold such that
$$c_1 \widehat{vol}_{\overline{NE}}(\gamma)\leq mob(\gamma)\leq c_2 \widehat{vol}_{\overline{NE}}(\gamma)$$
for any $\gamma \in \overline{NE}$. Moreover, we expect $\widehat{vol}_{\overline{NE}}(\gamma)=mob(\gamma)$. In this paper, we obtain the positive constant $c_2$ by using Lehamnn's estimates of mobility count functional $mc$. In a subsequent joint work \cite{LX15} with Lehmann, besides other results, we will obtain the positive constant $c_1$.
Indeed, for any fixed ample divisor $A$ and boundary point $\gamma\in \partial \overline{NE}$, it is not hard to obtain the asymptotic behaviour of the quotient $mob(\gamma+ \varepsilon A^{n-1})/ \widehat{vol}_{\overline{NE}}(\gamma+ \varepsilon A^{n-1})$ as $\varepsilon$ tends to zero.

\begin{theorem}
\label{theorem compare mob and vol_N}
Let $X$ be an $n$-dimensional smooth projective variety, and let $\overline{NE}$ be the closure of the cone generated by effective 1-cycles. Then for any $\gamma\in \overline{NE}$, we have $$mob(\gamma)\leq n! 2^{4n+1} \widehat{vol}_{\overline{NE}}(\gamma).$$
And for any fixed ample divisor $A$ and boundary point $\gamma\in \partial \overline{NE}$, there is a positive constant $c(A, \gamma)$ such that
$mob(\gamma+ \varepsilon A^{n-1})\geq c(A, \gamma)\varepsilon \widehat{vol}_{\overline{NE}}(\gamma+ \varepsilon A^{n-1})$. In particular, we have $$\underset{\varepsilon\rightarrow 0}{\liminf}\frac{mob(\gamma+ \varepsilon A^{n-1})}{\varepsilon \widehat{vol}_{\overline{NE}}(\gamma+ \varepsilon A^{n-1})} \geq c(A, \gamma).$$
\end{theorem}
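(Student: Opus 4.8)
\section*{Proof proposal for Theorem \ref{theorem compare mob and vol_N}}

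The plan is to prove the two inequalities by separate mechanisms: Lehmann's upper estimate for the mobility count gives the first, while the elementary functorial properties of $mob$ combined with a boundary rate estimate for $\widehat{vol}_{\overline{NE}}$ give the second. For the upper bound I would start from the identity $mob(\gamma)=n!\,\limsup_{m\to\infty} mc(m\gamma)/m^{n/(n-1)}$ and invoke the scale-invariant upper bound for the mobility count proved in \cite{Leh13}, namely $mc(\alpha)\leq 2^{4n+1}\langle\beta,\alpha\rangle^{n/(n-1)}/(\beta^n)^{1/(n-1)}$ for every ample class $\beta$. Fixing $\beta\in Amp_1$ (so $\beta^n=1$) and taking $\alpha=m\gamma$ yields $mc(m\gamma)\leq 2^{4n+1} m^{n/(n-1)}\langle\beta,\gamma\rangle^{n/(n-1)}$; note that $\langle\beta,\gamma\rangle\geq 0$ since $\beta$ is nef and $\gamma\in\overline{NE}$, so the truncation in the definition of $\widehat{vol}_{\overline{NE}}$ is inactive. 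Dividing by $m^{n/(n-1)}$, passing to the $\limsup$, taking the infimum over $\beta\in Amp_1$, and multiplying by $n!$ then gives $mob(\gamma)\leq n!\,2^{4n+1}\widehat{vol}_{\overline{NE}}(\gamma)$. For $\gamma\in\partial\overline{NE}$ both sides vanish — the left by the bigness criterion for $mob$ from \cite{Leh13} and the right by Theorem \ref{theorem vol_N} — so the inequality is automatic there.

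For the lower bound, observe first that $\gamma+\varepsilon A^{n-1}\in\overline{NE}^{\circ}$ for all $\varepsilon>0$. I would extract a lower bound on $mob$ from two properties of the mobility functional established in \cite{Leh13}: monotonicity on $\overline{NE}$ (adjoining an effective representative of $\gamma$ to a cycle only increases the number of general points it meets) and homogeneity of degree $n/(n-1)$. Together these give $mob(\gamma+\varepsilon A^{n-1})\geq mob(\varepsilon A^{n-1})=\varepsilon^{n/(n-1)}mob(A^{n-1})$, and $mob(A^{n-1})>0$ because the complete-intersection class $A^{n-1}$ is big, i.e. interior to $\overline{NE}$. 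It remains to bound $\widehat{vol}_{\overline{NE}}(\gamma+\varepsilon A^{n-1})$ from above. Since $\gamma\in\partial\overline{NE}$ and $Amp^{\vee}=\overline{NE}$, there is a nonzero nef class $D\in\overline{Amp}$ with $\langle D,\gamma\rangle=0$; testing the defining infimum against volume-one ample classes built from $D$ (perturbing by $A$ and renormalizing) produces $\widehat{vol}_{\overline{NE}}(\gamma+\varepsilon A^{n-1})\leq C(A,\gamma)\,\varepsilon$ for all small $\varepsilon$. Combining the two estimates, for $\varepsilon<1$,
\[
\frac{mob(\gamma+\varepsilon A^{n-1})}{\varepsilon\,\widehat{vol}_{\overline{NE}}(\gamma+\varepsilon A^{n-1})}\ \geq\ \frac{\varepsilon^{n/(n-1)}mob(A^{n-1})}{C(A,\gamma)\,\varepsilon^{2}}\ =\ \frac{mob(A^{n-1})}{C(A,\gamma)}\,\varepsilon^{\frac{n}{n-1}-2}\ \geq\ \frac{mob(A^{n-1})}{C(A,\gamma)},
\]
using $\tfrac{n}{n-1}-2\leq 0$ for $n\geq 2$; this gives the pointwise inequality with $c(A,\gamma)=mob(A^{n-1})/C(A,\gamma)$ and hence the $\liminf$ bound.

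I expect the main obstacle to be the boundary rate estimate $\widehat{vol}_{\overline{NE}}(\gamma+\varepsilon A^{n-1})\leq C(A,\gamma)\,\varepsilon$. The subtlety is that membership in $Amp_1$ fixes the normalization $\beta^n=1$, so a supporting nef class $D$ with $D^n=0$ cannot merely be rescaled into $Amp_1$; one must instead optimize the perturbation $\beta=D+\delta A$ over $\delta$, where the balance $\delta\sim\varepsilon$ produces the linear rate, whereas a big supporting class with $D^n>0$ yields the faster rate $\varepsilon^{n/(n-1)}$ — in either case the exponent does not beat the crude mobility lower bound precisely because $n/(n-1)\leq 2$. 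A secondary, routine point is upgrading Lehmann's estimate from integral ample classes to arbitrary real classes of $Amp_1$, which I would handle by density together with the continuity and homogeneity of both functionals.
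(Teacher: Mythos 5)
Your overall route coincides with the paper's: the upper bound comes from Lehmann's mobility-count estimate applied to multiples of $\gamma$ and then optimized over ample classes of volume one, and the lower bound comes from $mob(\gamma+\varepsilon A^{n-1})\geq mob(\varepsilon A^{n-1})=\varepsilon^{n/(n-1)}mob(A^{n-1})$ together with an upper bound on $\widehat{vol}_{\overline{NE}}(\gamma+\varepsilon A^{n-1})$ obtained by testing against perturbations $D+\delta A$ of a supporting nef class $D$. Two small points of care on the upper bound: Theorem 6.24 of \cite{Leh13} carries the hypothesis $s\geq 1$, so the scale-invariant form you quote is only legitimate once $\langle\beta,m\gamma\rangle\geq vol(\beta)$, which is why the paper passes to $k\gamma$ with $k$ large before applying it; and the passage from very ample integral classes to all of $Amp_1$ by density is exactly how the paper concludes. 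Neither is a gap.

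The one genuine error is the claimed rate $\widehat{vol}_{\overline{NE}}(\gamma+\varepsilon A^{n-1})\leq C(A,\gamma)\,\varepsilon$. Testing against $\beta_\delta=(D+\delta A)/vol(D+\delta A)^{1/n}$ gives a numerator of order $\delta+\varepsilon$, but the denominator is only bounded below by a multiple of $\delta^{(n-1)/n}$ when $D$ is maximally degenerate (i.e. $D^k\cdot A^{n-k}=0$ for all $k\geq 2$); optimizing at $\delta\sim\varepsilon$ then yields a pairing of order $\varepsilon^{1/n}$ and hence only $\widehat{vol}_{\overline{NE}}(\gamma+\varepsilon A^{n-1})\leq \mathbf{O}(\varepsilon^{1/(n-1)})$, not $\mathbf{O}(\varepsilon)$. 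This is sharp: on $X=\mathbb{P}^1\times\mathbb{P}^{n-1}$ with $\gamma$ the class of a line in a fiber $\{pt\}\times\mathbb{P}^{n-1}$, the supporting class is the pullback $D$ of a point from $\mathbb{P}^1$, which satisfies $D^2=0$, and a direct computation gives $\widehat{vol}_{\overline{NE}}(\gamma+\varepsilon A^{n-1})\sim\varepsilon^{1/(n-1)}$, much larger than $\varepsilon$ for $n\geq 3$. Fortunately the theorem survives: with the correct rate the exponent count is $\tfrac{n}{n-1}-1-\tfrac{1}{n-1}=0$, so the quotient $mob(\gamma+\varepsilon A^{n-1})/\bigl(\varepsilon\,\widehat{vol}_{\overline{NE}}(\gamma+\varepsilon A^{n-1})\bigr)$ is still bounded below --- but with no slack at all, which is precisely why the paper proves the estimate (\ref{eq vol_NE boundary behaviour}) with exponent $\tfrac{1}{n-1}$ and why the statement has $\varepsilon\,\widehat{vol}_{\overline{NE}}$ rather than $\widehat{vol}_{\overline{NE}}$ in the denominator. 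You should replace the linear rate by $\varepsilon^{1/(n-1)}$ and redo your final display accordingly; the conclusion is unchanged.
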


With respect to our volume functional $\widehat{vol}_{\mathcal{N}}$, we want to study Fujita type approximation results for 1-cycles over compact K\"ahler manifolds. In this paper, following Boucksom's analytical version of divisorial Zariski decomposition (see \cite{Bou04}, \cite{Bou02b}) (for the algebraic approach, see \cite{Nak04}), we study Zariski decomposition for 1-cycles. In divisorial Zariski decomposition, the negative part is an effective divisor of Kodaira dimension zero, and indeed it contains only one positive $(1,1)$-current. In our setting, we can prove this fact also holds for big 1-cycles. Comparing with other definitions of Zariski decomposition for 1-cycles (see e.g. \cite{FL13}), effectiveness of the negative part is one of its advantage. Using his characterization of volume by Monge-Amp\`{e}re mass, Boucksom showed that the Zariski projection preserves volume. It is also expected that in our setting the Zariski projection preserves
$\widehat{vol}_{\mathcal{N}}$. Indeed, this follows from our another kind of Zariski decomposition for 1-cycles developed in \cite{LX15}, which is more closely related to $\widehat{vol}_{\mathcal{N}}$.

\begin{theorem}
\label{thm introduction zariski}
Let $X$ be an $n$-dimensional compact K\"ahler manifold and let $\gamma\in \mathcal{N}^\circ$ be an interior point. Let $\gamma=Z(\gamma)+\{N(\gamma)\}$ be the Zariski decomposition in the sense of Boucksom, then $N(\gamma)$ is an effective curve and it is the unique positive current contained in the negative part $\{N(\gamma)\}$.
As a consequence, this implies $\widehat{vol}_{\mathcal{N}}(\{N(\gamma)\})=0$. Moreover, we have $\widehat{vol}_{\mathcal{N}}(\gamma)
=\widehat{vol}_{\mathcal{N}}(Z(\gamma))$.
\end{theorem}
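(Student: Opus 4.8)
The plan is to argue at the level of closed positive $(n-1,n-1)$-currents, transporting Boucksom's rigidity of the negative part \cite{Bou04} from divisors to codimension $n-1$, with the extraction of curves supplied by the Siu decomposition. First I fix a closed positive $(n-1,n-1)$-current $T$ with minimal singularities in the big class $\gamma\in\mathcal{N}^{\circ}$ and write its Siu decomposition $T=\sum_j\nu_j[C_j]+R$, where the $C_j$ are distinct irreducible curves, $\nu_j=\nu(T,C_j)>0$ are the generic Lelong numbers, and $R\ge 0$ is a closed positive current whose Lelong upper level sets are at most $0$-dimensional; in particular $\nu(R,C)=0$ for every curve $C$. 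Because $T$ has minimal singularities, $\nu_j=\nu(\gamma,C_j)$ are the minimal multiplicities of $\gamma$, so the negative part is the effective combination $N(\gamma)=\sum_j\nu_j[C_j]$ and $\{R\}=Z(\gamma)$. That $N(\gamma)$ is a genuine effective $1$-cycle (finitely many components) I would obtain by adapting Boucksom's argument that the classes of the components of the negative part are numerically independent, combining the finite mass bound $\sum_j\nu_j\langle\omega,[C_j]\rangle=\langle\omega,\{N(\gamma)\}\rangle<\infty$ against a fixed K\"ahler class $\omega$ with the negativity built into the minimal-multiplicity construction.

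The core step is uniqueness. Let $S$ be any closed positive $(n-1,n-1)$-current with $\{S\}=\{N(\gamma)\}$. Since $R$ lies in $Z(\gamma)$ and has vanishing generic Lelong number along every curve, $S+R$ is a closed positive current in the class $\gamma$; by additivity of generic Lelong numbers and minimality, $\nu(S,C_j)=\nu(S+R,C_j)\ge\nu(\gamma,C_j)=\nu_j$ for every $j$. Writing the Siu decomposition $S=\sum_j\mu_j[C_j]+\sum_k\mu'_k[C'_k]+R_S$ with $\mu_j=\nu(S,C_j)\ge\nu_j$, $\mu'_k>0$, $R_S\ge 0$, and pairing with $\omega$ gives
\[
\langle\omega,\{N(\gamma)\}\rangle=\sum_j\nu_j\langle\omega,[C_j]\rangle=\langle\omega,\{S\}\rangle=\sum_j\mu_j\langle\omega,[C_j]\rangle+\sum_k\mu'_k\langle\omega,[C'_k]\rangle+\langle\omega,\{R_S\}\rangle.
\]
As every $\langle\omega,[C_j]\rangle,\langle\omega,[C'_k]\rangle>0$ and $\langle\omega,\{R_S\}\rangle\ge 0$, while $\mu_j\ge\nu_j$, the identity forces $\mu_j=\nu_j$, the absence of the extra curves $C'_k$, and $R_S=0$; hence $S=N(\gamma)$, which is the asserted uniqueness.

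The two volume statements then follow. For $\widehat{vol}_{\mathcal{N}}(\{N(\gamma)\})=0$, by Theorem \ref{theorem vol_N} it suffices to show $\{N(\gamma)\}\notin\mathcal{N}^{\circ}$. Were it interior, then $\{N(\gamma)\}-\varepsilon\{\omega^{n-1}\}$ would still lie in $\mathcal{N}^{\circ}$ for small $\varepsilon>0$, hence be represented by a closed positive current $S'$; but then $S'+\varepsilon\,\omega^{n-1}$ would be a closed positive current in $\{N(\gamma)\}$ carrying a nonzero absolutely continuous part, hence distinct from the purely curve-supported $N(\gamma)$, contradicting uniqueness. Thus $\{N(\gamma)\}\in\partial\mathcal{N}$ and its volume vanishes. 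For the equality $\widehat{vol}_{\mathcal{N}}(\gamma)=\widehat{vol}_{\mathcal{N}}(Z(\gamma))$, the inequality $\ge$ is immediate from $\langle\beta,\gamma\rangle=\langle\beta,Z(\gamma)\rangle+\langle\beta,\{N(\gamma)\}\rangle\ge\langle\beta,Z(\gamma)\rangle$ for $\beta\in\mathcal{K}_1$ together with the monotonicity of $t\mapsto\max(t,0)^{n/(n-1)}$; the reverse inequality reduces to producing near-minimizing K\"ahler classes for $Z(\gamma)$ that are asymptotically orthogonal to $\{N(\gamma)\}$, i.e.\ to the orthogonality of the decomposition, which I would import from the refined Zariski decomposition for $1$-cycles of \cite{LX15}.

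I expect the main obstacle to be twofold. First, the finiteness of the negative part: the Hodge-index mechanism that forces numerical independence of the components in the divisorial case has no immediate analogue in codimension $n-1$, where two $(n-1,n-1)$-classes do not pair, so independence of the curve classes $[C_j]$ must be secured by a different route. Second, the reverse volume inequality, where concavity and homogeneity of $\widehat{vol}_{\mathcal{N}}^{(n-1)/n}$ only yield superadditivity and hence the easy direction, so that genuine orthogonality of the positive and negative parts is indispensable. By contrast the uniqueness argument is robust, resting only on the Siu decomposition, additivity of generic Lelong numbers, and comparison of masses.
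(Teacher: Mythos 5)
Your skeleton (Siu decomposition plus pairing of masses against a K\"ahler class, with the hypothesis $\gamma\in\mathcal{N}^\circ$ standing in for regularization) is the right one and matches the paper's, but three steps are genuinely missing. The most serious is your opening move: you fix a positive current $T\in\gamma$ ``with minimal singularities'' and deduce $\nu_j=\nu(\gamma,C_j)$, so that the Siu residue $R$ lies in $Z(\gamma)$. For $(n-1,n-1)$-classes there is no envelope construction and no analogue of Demailly's regularization theorem, so no single current is known to realize all the minimal multiplicities simultaneously; the paper deliberately avoids this object by proving instead that $\nu(\gamma,C)=\underset{0\leq T\in\gamma}{inf}\,\nu(T,C)$ (Lemma \ref{lemma multiplicity}), that for $T\geq\beta^{n-1}$ the current $T-N(\gamma)$ is a positive representative of $Z(\gamma)$ and that $\nu(Z(\gamma),C)=0$ only as an infimum (Lemma \ref{lemma big zariski projection}), and then that $\{N(\gamma)\}=\{N(\{N(\gamma)\})\}$ (Lemma \ref{lemma neg eq}); uniqueness follows from $S\geq\sum\nu(S,C)[C]\geq N(\{N(\gamma)\})=N(\gamma)$ together with your mass identity. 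Your uniqueness computation can be repaired in this spirit (for fixed $C_j$ and $\delta>0$ add to $S$ a positive current $T_j-N(\gamma)\in Z(\gamma)$ with $\nu(T_j,C_j)\leq\nu(\gamma,C_j)+\delta$), but as written it rests on a current you have not constructed.

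Second, you concede in your final paragraph that you cannot prove finiteness of $N(\gamma)$; this is not optional, since the theorem asserts $N(\gamma)$ is an effective curve. The paper's Lemma \ref{lemma bouck injective } does it with no Hodge-index input: if $\Gamma=\sum a_i[C_i]$ has zero class with the $C_i$ in the negative support, split $\Gamma=\Gamma_+-\Gamma_-$, use $\Gamma_\pm\leq cN(\gamma)$ and $Z(\{N(\gamma)\})=0$ to force $Z(\{\Gamma_\pm\})=0$, hence $\Gamma_\pm=N(\{\Gamma_\pm\})$ and $\Gamma_+=\Gamma_-$; the curve classes are therefore linearly independent and there are at most $\rho$ of them. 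Third, for $\widehat{vol}_{\mathcal{N}}(\gamma)=\widehat{vol}_{\mathcal{N}}(Z(\gamma))$ you prove only the easy inequality and defer the other to an unspecified ``asymptotic orthogonality''. The paper's actual route is to invoke the decomposition $\gamma=B_\gamma^{n-1}+\zeta_\gamma$ of \cite{LX15} with $B_\gamma$ nef and big and $\widehat{vol}_{\mathcal{N}}(B_\gamma^{n-1})=\widehat{vol}_{\mathcal{N}}(\gamma)$, use nefness of $B_\gamma$ to produce elements $\Omega_\varepsilon+T_\gamma$ of $\gamma[-\varepsilon\omega^{n-1}]$ with $\Omega_\varepsilon$ smooth, deduce $\nu(\gamma,x)\leq\nu(T_\gamma,x)$ and hence $Z(\gamma)-B_\gamma^{n-1}\in\mathcal{N}$, which yields $\widehat{vol}_{\mathcal{N}}(Z(\gamma))\geq\widehat{vol}_{\mathcal{N}}(B_\gamma^{n-1})$. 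By contrast, your argument that $\widehat{vol}_{\mathcal{N}}(\{N(\gamma)\})=0$ follows from uniqueness is correct and essentially the one in the paper.
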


\section{Characterizing volume for divisors}

\subsection{Technical preliminaries}

\subsubsection{Smoothing movable classes}

Besides the well known cone duality $\mathcal{E}_{NS} ^\vee =\overline{\mathcal{M}}_{NS}$, we also have cone dualities between the cone defined by positive currents and the cone defined by positive forms. They provide a method to smooth movable classes, which will be useful in volume characterization by using special metrics.

Let $X$ be an $n$-dimensional compact complex manifold, then we have Bott-Chern cohomology groups $H^{\bullet, \bullet}_{BC}(X, \mathbb{K})$ and Aeppli cohomology groups $H^{\bullet, \bullet}_{A}(X, \mathbb{K})$ with $\mathbb{K}=\mathbb{R}$ or $\mathbb{C}$. Recall that we have canonical duality between $H^{\bullet, \bullet}_{BC}(X, \mathbb{K})$ and $H^{n-\bullet, n-\bullet}_{A}(X, \mathbb{K})$ (see \cite{AT13}).

\begin{definition}
\label{def cone}
Let $X$ be an $n$-dimensional compact complex manifold.
\\
(1) The cone $\mathcal{E}$ is defined to be the convex cone in $H^{1,1}_{BC}(X, \mathbb{R})$ generated by $d$-closed positive $(1,1)$-currents;
\\
(2) The cone $\mathcal{E}_A$ is defined to be the convex cone in $H^{1,1}_{A}(X, \mathbb{R})$ generated by $dd^c$-closed positive $(1,1)$-currents;
\\
(3) The balanced cone $\mathcal{B}$ is defined to be the convex cone in $H^{n-1,n-1}_{BC}(X, \mathbb{R})$ generated by $d$-closed strictly positive $(n-1,n-1)$-forms;
\\
(4) The Gauduchon cone $\mathcal{G}$ is defined to be the convex cone in $H^{n-1,n-1}_{A}(X, \mathbb{R})$ generated by $dd^c$-closed strictly positive $(n-1,n-1)$-forms.
\end{definition}

Under the duality of $H^{1,1}_{BC}(X, \mathbb{R})$ and $H^{n-1,n-1}_{A}(X, \mathbb{R})$ and the duality of $H^{1,1}_{A}(X, \mathbb{R})$ and $H^{n-1,n-1}_{BC}(X, \mathbb{R})$, we have the following cone dualities between the above positive cones.

\begin{proposition}
\label{prop cone dual}
Let $X$ be an $n$-dimensional compact complex manifold, then we have
$\mathcal{E}^{\vee}= \overline{\mathcal{G}}$ and $\mathcal{E}_A ^{\vee}= \overline{\mathcal{B}}$
\end{proposition}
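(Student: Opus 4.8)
The plan is to prove the first duality $\mathcal{E}^{\vee}=\overline{\mathcal{G}}$ in full and then obtain $\mathcal{E}_A^{\vee}=\overline{\mathcal{B}}$ by the same argument, interchanging the roles of the operators $d$ and $dd^c$ and of the cohomologies $H^{\bullet,\bullet}_{BC}$ and $H^{\bullet,\bullet}_A$ (so that ``Gauduchon'' is replaced by ``balanced''). Throughout I use the canonical pairing $\langle\{T\},\{G\}\rangle=\int_X T\wedge G$; it descends to $H^{1,1}_{BC}(X,\mathbb{R})\times H^{n-1,n-1}_A(X,\mathbb{R})$ precisely because the first entry is $d$-closed and the second is $dd^c$-closed (integration by parts kills the Bott--Chern and Aeppli ambiguities). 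The easy inclusion $\overline{\mathcal{G}}\subseteq\mathcal{E}^{\vee}$ is immediate: if $G$ is a $dd^c$-closed strictly positive $(n-1,n-1)$-form and $T$ is a $d$-closed positive $(1,1)$-current, then $\langle\{T\},\{G\}\rangle=T(G)\geq 0$ by positivity, and since $\mathcal{E}$ is generated by such classes while $\mathcal{E}^{\vee}$ is closed, the whole of $\overline{\mathcal{G}}$ lies in $\mathcal{E}^{\vee}$.

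For the reverse inclusion $\mathcal{E}^{\vee}\subseteq\overline{\mathcal{G}}$ I would argue by contradiction, using Hahn--Banach separation in the finite-dimensional space $H^{n-1,n-1}_A(X,\mathbb{R})$. Suppose $\Omega\in\mathcal{E}^{\vee}\setminus\overline{\mathcal{G}}$. As $\overline{\mathcal{G}}$ is a closed convex cone, there is a class $\{R\}\in H^{1,1}_{BC}(X,\mathbb{R})$ (an element of the dual space) with $\langle\Omega,\{R\}\rangle<0$ and $\langle\{G\},\{R\}\rangle\geq 0$ for every $\{G\}\in\overline{\mathcal{G}}$, i.e. $\{R\}\in\mathcal{G}^{\vee}$. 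The problem thereby reduces to the single inclusion $\mathcal{G}^{\vee}\subseteq\overline{\mathcal{E}}$: granting it, $\{R\}\in\overline{\mathcal{E}}$, and since $\mathcal{E}^{\vee}=(\overline{\mathcal{E}})^{\vee}$ the hypothesis $\Omega\in\mathcal{E}^{\vee}$ forces $\langle\Omega,\{R\}\rangle\geq 0$, contradicting the strict inequality.

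The analytic heart is therefore the statement: if a real Bott--Chern class $\{R\}$ satisfies $\int_X R\wedge G\geq 0$ for every $dd^c$-closed strictly positive $(n-1,n-1)$-form $G$, then $\{R\}\in\overline{\mathcal{E}}$. This is a Lamari-type duality, which I would establish by a second, infinite-dimensional, Hahn--Banach separation in the Fréchet space of currents, arguing by contraposition. Suppose $\{R\}\notin\overline{\mathcal{E}}$; at the level of currents this means a representative $R$ does not belong to the weakly closed convex cone $\mathcal{C}$ generated by $d$-closed positive $(1,1)$-currents together with the $dd^c$-exact currents. Separating $R$ from $\mathcal{C}$ produces a smooth $(n-1,n-1)$-form $G$ with $\int_X R\wedge G<0$ while $\int_X S\wedge G\geq 0$ for all $S\in\mathcal{C}$. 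Testing against positive currents shows that $G$ is a positive form, and testing against $dd^c$-exact currents, via $\int_X dd^c\psi\wedge G=\int_X\psi\,dd^cG$, shows $dd^cG=0$; adding a small multiple of a fixed Gauduchon form (which exists by Gauduchon's theorem, guaranteeing that the ambient cone is solid) we may take $G$ strictly positive. Then $G$ is a Gauduchon form with $\int_X R\wedge G<0$, contradicting the hypothesis, and Definition~\ref{def cone} identifies this $G$ as a generator of $\mathcal{G}$.

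I expect the main obstacle to be exactly this last separation. The delicate points are, first, that the cone $\mathcal{C}$ is genuinely weakly closed: this rests on the compactness of $X$ to bound the masses of the normalized positive currents and hence to extract weak limits, together with the closedness of the space of $dd^c$-exact currents, so that the closed cone in cohomology $\overline{\mathcal{E}}$ corresponds to a closed cone of currents. Second, one must check that the functional produced by Hahn--Banach is represented by a smooth strictly positive $dd^c$-closed form and not merely by a positive current; it is here that the regularity of the separator and the appeal to Gauduchon's theorem in the Bott--Chern--Aeppli framework do the essential work. Once $\mathcal{E}^{\vee}=\overline{\mathcal{G}}$ is secured, the identity $\mathcal{E}_A^{\vee}=\overline{\mathcal{B}}$ follows verbatim by exchanging $d$ with $dd^c$ and $H_{BC}$ with $H_A$ throughout.
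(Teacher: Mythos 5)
Your overall architecture matches the paper's: the paper proves both dualities by invoking Lamari's Hahn--Banach characterization of positive $(1,1)$-currents --- precisely the ``analytic heart'' you isolate --- and then reads off the cone duality through the pairing between $H^{1,1}_{BC}$ and $H^{n-1,n-1}_{A}$ (resp.\ $H^{1,1}_{A}$ and $H^{n-1,n-1}_{BC}$). Your easy inclusion and the finite-dimensional separation reducing $\mathcal{E}^{\vee}\subseteq\overline{\mathcal{G}}$ to $\mathcal{G}^{\vee}\subseteq\overline{\mathcal{E}}$ are fine.

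However, your proof of the Lamari-type lemma has a genuine gap at the decisive step. You separate $R$ from the cone $\mathcal{C}$ generated by \emph{$d$-closed} positive $(1,1)$-currents together with the $dd^c$-exact currents, and then assert that ``testing against positive currents shows that $G$ is a positive form.'' That test is not available: the separating form $G$ is only known to pair nonnegatively with elements of $\mathcal{C}$, i.e.\ with $d$-closed positive currents, and this does not force pointwise positivity of $G$ --- for that you would need nonnegativity against arbitrary positive $(1,1)$-currents (e.g.\ Dirac-type ones), which are not closed and hence not in your $\mathcal{C}$. Without pointwise positivity, $G$ is merely a $dd^c$-closed form lying in the dual cone of the closed positive currents, not a Gauduchon metric, so it yields no contradiction with the hypothesis $\{R\}\in\mathcal{G}^{\vee}$. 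The standard repair is to let $\mathcal{C}$ be generated by \emph{all} positive $(1,1)$-currents plus the $dd^c$-exact ones: the separator is then genuinely positive, and membership $R\in\mathcal{C}$ still gives $\{R\}\in\mathcal{E}$ because $R$ is $d$-closed and the $dd^c$-exact correction is closed, so the positive summand is automatically closed. Note also that weak closedness of this cone rests on a mass bound obtained by pairing with a fixed Gauduchon metric --- that, rather than ``solidity,'' is where Gauduchon's theorem enters, and it is used a second time for the final perturbation to strict positivity. With these corrections your argument becomes a legitimate proof of the characterization that the paper simply cites from Lamari.
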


\begin{proof}
Indeed, the above cone dualities are consequences of geometric Hahn-Banach theorem, for example, one can see \cite{Sul76}, \cite{Lam99} or \cite{Tom10}. For reader's convenience, let us sketch its proof. Firstly, we prove $\mathcal{E}^{\vee}= \overline{\mathcal{G}}$. Let $\alpha$ be a real smooth $(1,1)$-form. Applying Lamari's characterization of
positive $(1,1)$-currents, we know that there exists a distribution $\psi$ such that $\alpha+dd^c \psi$ is a positive $(1,1)$-current if and only if
$$\int \alpha \wedge G \geq 0$$
for any $dd^c$-closed strictly positive $(n-1,n-1)$-form $G$ (thus $G=\omega^{n-1}$ for some Gauduchon metric $\omega$). Under the natural duality of $H^{1,1}_{BC}(X, \mathbb{R})$ and $H^{n-1,n-1}_{A}(X, \mathbb{R})$, it is clear this implies the cone duality $\mathcal{E}^{\vee}= \overline{\mathcal{G}}$.
Using the same technique (Hahn-Banach theorem), one can also give a characterization of $dd^c$-closed positive $(1,1)$-currents.
More precisely, there exists a $(0,1)$-current $\theta$ such that $\alpha+\partial\theta+ \overline{\partial\theta}$ is a positive $(1,1)$-current if and only if
$$\int \alpha \wedge B \geq 0$$
for any $d$-closed strictly positive $(n-1,n-1)$-form $B$ (thus $B=\omega^{n-1}$ for some balanced metric $\omega$). Under the natural duality of $H^{1,1}_{A}(X, \mathbb{R})$ and $H^{n-1,n-1}_{BC}(X, \mathbb{R})$, this implies the cone duality $\mathcal{E}_A ^{\vee}= \overline{\mathcal{B}}$.
\end{proof}

Recall that the cone of movable curves $\mathcal{M}_{NS}$ is generated by numerical equivalence classes of curves of the form $\mu_* (\tilde{A}_1 \wedge...\wedge \tilde{A}_{n-1})$, where $\mu: \tilde{X}\rightarrow X$ ranges among all modifications with $\tilde{X}$ smooth projective and $\tilde{A}_1,...,\tilde{A}_{n-1}$ range among all ample divisors over $\tilde{X}$.
And its transcendental version is the movable cone
$\mathcal{M}\subseteq H^{n-1,n-1}_{BC}(X, \mathbb{R})$ over a compact K\"ahler manifold $X$. $\mathcal{M}$ is the cone generated by all the Bott-Chern classes of the form $[\mu_* (\tilde{\omega}_1 \wedge...\wedge \tilde{\omega}_{n-1})]_{BC}$, where $\mu: \tilde{X}\rightarrow X$ ranges among all modifications with $\tilde{X}$ K\"ahler and $\tilde{\omega}_1,...,\tilde{\omega}_{n-1}$ range among all K\"ahler metrics over
$\tilde{X}$.

Our first observation is that any current $\mu_* (\tilde{\omega}_1 \wedge...\wedge \tilde{\omega}_{n-1})$ can be smoothed to be a Gauduchon metric $G$ such that $[\mu_* (\tilde{\omega}_1 \wedge...\wedge \tilde{\omega}_{n-1})]_A =[G]_A$.

\begin{proposition}
\label{prop smoothing gauduchon}
Let $\mu: \tilde{X}\rightarrow X$ be a modification between compact complex manifold, and let $\tilde G$ be a Gauduchon metric on $\tilde{X}$. Then $\mu_* \tilde G$ can be smoothed to be a Gauduchon metric $G$ such that $[\mu_* \tilde G]_A=[G]_A$.
\end{proposition}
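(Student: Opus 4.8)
The plan is to prove that the Aeppli class $[\mu_*\tilde G]_A$ lies in the \emph{interior} of the Gauduchon cone $\overline{\mathcal G}$, and then to upgrade interior membership to the existence of a genuine smooth Gauduchon representative. First I would check that $\mu_*\tilde G$ is a legitimate $dd^c$-closed positive $(n-1,n-1)$-current on $X$, so that its Aeppli class is defined: positivity is preserved under pushforward, and $dd^c(\mu_*\tilde G)=\mu_*(dd^c\tilde G)=0$ because $\tilde G$ is Gauduchon and $\mu_*$ commutes with $d$ and $d^c$. In particular $[\mu_*\tilde G]_A\in\overline{\mathcal G}=\mathcal E^\vee$ by Proposition \ref{prop cone dual}, where $\mathcal E$ is the cone of Definition \ref{def cone}.

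The core step is the strict positivity of the pairing. I would fix an arbitrary nonzero class $\eta\in\mathcal E$, represented by a nonzero $d$-closed positive $(1,1)$-current $T$. Since $\mu^*\colon H^{1,1}_{BC}(X,\mathbb R)\to H^{1,1}_{BC}(\tilde X,\mathbb R)$ and $\mu_*\colon H^{n-1,n-1}_{A}(\tilde X,\mathbb R)\to H^{n-1,n-1}_{A}(X,\mathbb R)$ are adjoint for the Bott--Chern/Aeppli duality (projection formula), I would compute
\[
\langle \eta, [\mu_*\tilde G]_A\rangle = \langle \mu^*\eta,[\tilde G]_A\rangle = \int_{\tilde X}\mu^* T\wedge \tilde G .
\]
Here $\mu^* T$ is the pullback positive $(1,1)$-current, well defined via local potentials and nonzero because $T$ is, and $\tilde G=\tilde\omega^{n-1}$ is strictly positive; hence the last integral is the $\tilde\omega$-mass of $\mu^* T$ and is strictly positive. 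Thus $\langle\eta,[\mu_*\tilde G]_A\rangle>0$ for every $\eta\in\mathcal E\setminus\{0\}$.

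Next I would record that $\mathcal E$ is a closed salient cone: closedness follows from the standard weak-compactness/bounded-mass argument for positive currents, and salience from the observation that if $T_1,T_2\geq 0$ have $[T_1]+[T_2]=0$ in $H^{1,1}_{BC}$, then pairing with a fixed Gauduchon metric on $X$ (which exists by Gauduchon's theorem) forces $T_1=T_2=0$. For such a cone the elementary finite-dimensional duality yields $\Omega\in\mathrm{int}(\mathcal E^\vee)$ if and only if $\langle\Omega,\eta\rangle>0$ for all $\eta\in\mathcal E\setminus\{0\}$; combined with the previous paragraph this shows $[\mu_*\tilde G]_A$ is an interior point of $\overline{\mathcal G}$. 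Finally I would translate this into a smooth representative: every Aeppli class admits a smooth representative and strict positivity is an open condition, so the cone $\mathcal G$ generated by smooth $dd^c$-closed strictly positive $(n-1,n-1)$-forms is open and $\mathrm{int}(\overline{\mathcal G})=\mathcal G$. Hence $[\mu_*\tilde G]_A$ is represented by a smooth $dd^c$-closed strictly positive $(n-1,n-1)$-form, which by Michelsohn's lemma is $\omega^{n-1}$ for a Gauduchon metric $\omega$; taking $G$ to be this form gives $[\mu_*\tilde G]_A=[G]_A$.

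The main obstacle I anticipate is precisely this last passage from ``interior point of $\overline{\mathcal G}$'' to ``represented by an honest Gauduchon metric'': one must verify that the open cone $\mathcal G$ exhausts the interior of its closure, which rests on the existence of smooth Aeppli representatives together with Michelsohn's correspondence between strictly positive $(n-1,n-1)$-forms and $(n-1)$-st powers of Hermitian metrics. By contrast the strict positivity of the pairing is robust, requiring only the projection formula and the nonvanishing of $\mu^*T$, so I would expect the bookkeeping around the interior characterization to be where the real care is needed.
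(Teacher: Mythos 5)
Your proposal is correct and follows essentially the same route as the paper: both reduce the statement, via the duality $\mathcal{E}^{\vee}=\overline{\mathcal{G}}$, to the strict positivity of $\langle[\mu_*\tilde G]_A,\alpha\rangle=\int\tilde G\wedge\tilde T$ for a nonzero positive current $\tilde T$ in the pulled-back class, and then conclude that an interior point of $\overline{\mathcal{G}}$ is represented by a Gauduchon metric. The only difference is that you spell out the supporting facts the paper leaves implicit (closedness and salience of $\mathcal{E}$, and the identification of $\mathrm{int}(\overline{\mathcal{G}})$ with $\mathcal{G}$ via Michelsohn's lemma), which is careful but not a change of method.
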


\begin{proof}
From the cone duality $\mathcal{E}^{\vee}= \overline{\mathcal{G}}$, in order to prove $[\mu_* \tilde G]_A \in \mathcal{G}$, we only need to verify that $[\mu_* \tilde G]_A$ is an interior point of $\mathcal{E}^{\vee}$$(=\overline{\mathcal{G}})$. For any $\alpha\in \mathcal{E}\setminus \{[0]_{BC}\}$, since the pull-back $\mu^* \alpha$ is also pseudo-effective, we have $$\langle[\mu_* \tilde G]_A, \alpha\rangle =
\langle[\tilde G]_A, \mu^* \alpha\rangle \geq 0.$$
Take a positive current $\tilde T \in \mu^* \alpha$, then we have
$$\langle[\tilde G]_A, \mu^* \alpha\rangle=
\int \tilde G \wedge \tilde T .$$
By the strictly positivity of $\tilde G$, $\int \tilde G \wedge \tilde T =0$ if and only if $\tilde T =0$, and this contradicts to our assumption $\alpha=[\mu_* \tilde T]_{BC}\in \mathcal{E}\setminus \{[0]_{BC}\}$. Thus $\langle[\mu_* \tilde G]_A, \alpha\rangle >0$ for any $\alpha\in \mathcal{E}\setminus \{[0]_{BC}\}$, and this implies $[\mu_* \tilde G]_A$ is an interior point of $\overline{\mathcal{G}}$, which means that there exists a Gauduchon metric $G$ such that $[\mu_* \tilde G]_A= [G]_A$.
\end{proof}

Indeed, the current $\mu_* (\tilde{\omega}_1 \wedge...\wedge \tilde{\omega}_{n-1})$ can not only be smoothed to be a Gauduchon class, it can also smoothed to be a balanced metric $B$ such that $[\mu_* (\tilde{\omega}_1 \wedge...\wedge \tilde{\omega}_{n-1})]_{BC} =[B]_{BC}$.
From the proof of Proposition \ref{prop smoothing gauduchon}, we see that a key ingredient is that the pull-back of cohomology class in $\mathcal{E}$ contains positive currents. Analogue to this fact, due to a result of \cite{AB95}, one can also always pull back Aeppli class in $\mathcal{E}_A$ and get $dd^c$-closed positive $(1,1)$-currents on the manifold upstairs.

\begin{lemma}(see \cite{AB95})
\label{lemma AB95}
Let $\mu: \tilde{X}\rightarrow X$ be a modification between compact complex manifold, and let $T$ be a $dd^c$-closed positive $(1,1)$-current on $X$. Then there exists an unique $dd^c$-closed positive $(1,1)$-current $\tilde T \in \mu^*[T]_A$ such that
$\mu_* \tilde T=T$.
\end{lemma}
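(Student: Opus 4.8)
The statement is due to Alessandrini--Bassanelli \cite{AB95}; the plan is to recall how their extension technique produces $\tilde T$ and then isolate the two places where genuine work is needed. Write $Z := \mu(\mathrm{Exc}(\mu)) \subset X$ for the center of the modification, an analytic set of codimension $\geq 2$, and $E := \mu^{-1}(Z)$ for the exceptional set; over $X \setminus Z$ the map $\mu$ is a biholomorphism. Since a positive $dd^c$-closed $(1,1)$-current cannot charge an analytic set of codimension $\geq 2$, we have $T = \mathbf 1_{X \setminus Z}\, T$, so $T$ is already recovered from its restriction to the isomorphism locus, and the whole problem becomes one of extending a current across $E$.

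For existence, I would first pull back on the isomorphism locus, setting $\tilde T_0 := (\mu|_{\tilde X \setminus E})^* T$, which is a positive $dd^c$-closed $(1,1)$-current on $\tilde X \setminus E$. The crucial analytic point is a local finite-mass estimate for $\tilde T_0$ near $E$: comparing a fixed Hermitian metric $\tilde\omega$ with $\mu^*\omega$ and the exceptional classes, the mass $\int_K \tilde T_0 \wedge \tilde\omega^{n-1}$ over a compact set $K$ is dominated by $\int_X T \wedge \omega^{n-1}$ together with controlled exceptional contributions, hence is finite. Granting finite mass, the Skoda--El Mir extension theorem in its $dd^c$-closed form yields the trivial extension $\tilde T$ across $E$, a positive $dd^c$-closed $(1,1)$-current on $\tilde X$ that puts no mass on $E$. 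Its pushforward $\mu_* \tilde T$ is then positive and $dd^c$-closed, agrees with $T$ on $X \setminus Z$, and charges no mass on $Z$; since positive $(1,1)$-currents are of order zero and $Z$ is Lebesgue-null, this forces $\mu_* \tilde T = T$. To identify the Aeppli class, choose a smooth representative $\alpha$ of $[T]_A$, so that $T - \alpha \in \operatorname{Im}\partial + \operatorname{Im}\bar\partial$; the same relation holds for $\tilde T_0 - \mu^*\alpha$ on the isomorphism locus, and the content of the construction is that this persists across $E$, giving $[\tilde T]_A = [\mu^*\alpha]_A = \mu^*[T]_A$.

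For uniqueness, suppose $\tilde T'$ is another positive $dd^c$-closed current in $\mu^*[T]_A$ with $\mu_* \tilde T' = T$. Because $\mu$ is biholomorphic over $X \setminus Z$, the condition $\mu_*(\cdot) = T$ determines both $\tilde T$ and $\tilde T'$ on $\tilde X \setminus E$, where they therefore coincide; hence $S := \tilde T - \tilde T'$ is a $dd^c$-closed $(1,1)$-current supported on $E$. By the support theorem for $dd^c$-closed currents, $S$ is a real combination $\sum_i c_i [E_i]$ of the classes of the codimension-one prime components $E_i$ of $E$. Being a difference of currents in the same Aeppli class, $S$ is Aeppli-exact, so $\sum_i c_i [E_i]_A = 0$ in $H^{1,1}_A(\tilde X)$; as the Aeppli classes of the exceptional prime divisors are linearly independent, all $c_i = 0$ and $\tilde T = \tilde T'$.

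The main obstacle is the existence half: both the local finite-mass estimate near $E$ and the preservation of $dd^c$-closedness under trivial extension require the $dd^c$-closed refinement of Skoda--El Mir, which is precisely the technical heart of \cite{AB95}, and the identification of the Aeppli class across $E$ is of the same nature. By contrast, once these extension results are granted, the pushforward identity and the uniqueness argument are essentially formal, depending only on the codimension of $Z$ and the independence of the exceptional classes.
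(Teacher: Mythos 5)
The paper offers no proof of this lemma: it is quoted verbatim as a result of Alessandrini--Bassanelli \cite{AB95}, so there is no internal argument to compare against. Your reconstruction follows the standard strategy of that paper --- pull back over the isomorphism locus, establish local finiteness of mass near the exceptional set, apply the $dd^c$-closed refinement of Skoda--El Mir to take the trivial extension, and check the pushforward and the Aeppli class --- and you correctly identify the mass estimate and the $dd^c$-closed extension theorem as the genuine technical content that must be imported from \cite{AB95}. Since the paper itself treats the lemma as a black box, deferring exactly those two points to the reference is consistent with how the lemma is used here.

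The one step I would not accept as written is the uniqueness argument. You reduce to $S:=\tilde T-\tilde T'=\sum_i c_i[E_i]$ with Aeppli class zero and then invoke ``linear independence of the Aeppli classes of the exceptional prime divisors.'' That independence is itself a nontrivial assertion; on a general compact complex manifold (the setting of the lemma) it does not follow from anything you have set up, and you would need a separate negativity-type argument to justify it. Fortunately it is also unnecessary: since $\tilde T$ is a trivial extension it puts no mass on $E$, so $S=\mathbf 1_E S=-\mathbf 1_E\tilde T'\leq 0$ is \emph{anti-effective}, i.e.\ all $c_i\geq 0$ after changing sign. An Aeppli-trivial class pairs to zero against $[\omega_G^{n-1}]_{BC}$ for any Gauduchon metric $\omega_G$ (this pairing is exactly the duality $H^{1,1}_A\times H^{n-1,n-1}_{BC}\to\mathbb{R}$ the paper uses throughout), giving $\sum_i c_i\int_{E_i}\omega_G^{n-1}=0$ with every term nonnegative and every integral strictly positive, hence $c_i=0$. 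This repairs the gap using only positivity, with no independence statement needed.
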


We remark that the above fact is already used by Toma (see \cite{Tom10}), and the following proposition is essentially due to Toma.

\begin{proposition}
\label{prop smoothing balanced}
Let $\mu: \tilde{X}\rightarrow X$ be a modification between compact balanced manifold, and let $\tilde B$ be a balanced metric on $\tilde{X}$. Then $\mu_* \tilde B$ can be smoothed to be a balanced metric $B$ such that $[\mu_* \tilde B]_{BC}=[B]_{BC}$.
\end{proposition}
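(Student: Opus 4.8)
The plan is to run the proof of Proposition \ref{prop smoothing gauduchon} verbatim, but with the dual pair of cones. By Proposition \ref{prop cone dual} we have the cone duality $\mathcal{E}_A^{\vee}=\overline{\mathcal{B}}$ under the pairing between $H^{n-1,n-1}_{BC}(X,\mathbb{R})$ and $H^{1,1}_{A}(X,\mathbb{R})$. Hence, to produce a balanced metric $B$ with $[\mu_* \tilde B]_{BC}=[B]_{BC}$, it suffices to show that the class $[\mu_* \tilde B]_{BC}$ is an interior point of $\overline{\mathcal{B}}=\mathcal{E}_A^{\vee}$, i.e. that $\langle [\mu_* \tilde B]_{BC}, \alpha\rangle>0$ for every $\alpha\in \mathcal{E}_A\setminus\{[0]_A\}$.

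For the key positivity estimate I would argue as follows. Fix $\alpha\in \mathcal{E}_A\setminus\{[0]_A\}$ and choose a $dd^c$-closed positive $(1,1)$-current $T\in\alpha$. By the projection formula for the adjoint pair $(\mu^*,\mu_*)$ on Aeppli/Bott--Chern cohomology, $$\langle [\mu_* \tilde B]_{BC}, \alpha\rangle = \langle [\tilde B]_{BC}, \mu^*\alpha\rangle.$$ The point is to represent $\mu^*\alpha$ by a positive current upstairs: by Lemma \ref{lemma AB95} there is a unique $dd^c$-closed positive $(1,1)$-current $\tilde T\in\mu^*\alpha$ with $\mu_*\tilde T=T$. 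Therefore $$\langle [\tilde B]_{BC}, \mu^*\alpha\rangle=\int_{\tilde X}\tilde B\wedge\tilde T.$$ Since $\tilde B$ is a strictly positive $(n-1,n-1)$-form and $\tilde T\ge 0$, this integral vanishes if and only if $\tilde T=0$; but $\tilde T=0$ forces $T=\mu_*\tilde T=0$ and hence $\alpha=[0]_A$, contrary to assumption. Thus $\langle [\mu_* \tilde B]_{BC}, \alpha\rangle>0$ for all $\alpha\in\mathcal{E}_A\setminus\{[0]_A\}$, so $[\mu_* \tilde B]_{BC}$ lies in the interior of $\overline{\mathcal{B}}$ and is represented by a genuine balanced metric $B$.

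The one step that carries real content---and the only place the argument departs from the Gauduchon case---is the pullback of the Aeppli class, so this is where I expect the main difficulty to sit. In Proposition \ref{prop smoothing gauduchon} the pullback $\mu^*\alpha$ automatically contained a positive current, because the naive pullback of a $d$-closed positive $(1,1)$-current under a modification is again $d$-closed and positive. For the Aeppli/balanced pairing this is no longer automatic: $dd^c$-closedness need not survive a crude pullback, and one genuinely needs the Alessandrini--Bassanelli result quoted as Lemma \ref{lemma AB95} to obtain a canonical $dd^c$-closed positive representative $\tilde T$ in $\mu^*\alpha$ with the correct pushforward $\mu_*\tilde T=T$. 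The remaining points---namely that $\mu_*$ on $H^{n-1,n-1}_{BC}$ is adjoint to $\mu^*$ on $H^{1,1}_A$ with respect to the duality pairings (the projection formula used above), and that $\mathcal{E}_A\setminus\{[0]_A\}$ is nonempty so that interiority is the right condition---are routine and mirror Toma's treatment.
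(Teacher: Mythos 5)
Your proposal is correct and follows essentially the same route as the paper: reduce to showing $\langle[\mu_*\tilde B]_{BC},\alpha\rangle>0$ for all $\alpha\in\mathcal{E}_A\setminus\{[0]_A\}$ via the duality $\mathcal{E}_A^{\vee}=\overline{\mathcal{B}}$, and use the Alessandrini--Bassanelli lemma to obtain a non-zero $dd^c$-closed positive current $\tilde T\in\mu^*\alpha$ so that $\int\tilde B\wedge\tilde T>0$. Your added remark correctly identifies the pullback of the Aeppli class as the one step with real content, which is exactly the point the paper attributes to Lemma \ref{lemma AB95} and Toma.
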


\begin{proof}
Similar to the proof of Proposition \ref{prop smoothing gauduchon}, we only need to show
$$\langle[\mu_* \tilde B]_{BC}, \alpha\rangle >0$$
for any $\alpha\in \mathcal{E}_A\setminus \{[0]_{A}\}$. Now, by using Lemma \ref{lemma AB95}, for any $\alpha=[T]_A \in \mathcal{E}_A\setminus \{[0]_{A}\}$, one can find a non-zero $dd^c$-closed positive $(1,1)$-current $\tilde T \in \mu^*\alpha$, so we have $$\langle[\mu_* \tilde B]_{BC}, \alpha\rangle=
 \langle[\tilde B]_{BC}, \mu^*\alpha\rangle=\int \tilde B \wedge \tilde T >0.$$
And as a consequence, there exists a balanced metric $B$ such that $[\mu_* \tilde B]_{BC}=[B]_{BC}$.
\end{proof}

\subsubsection{An invariant of movable classes}

In this subsection, we introduce an (universal) invariant $\mathfrak{M}$ for movable, balanced or Gauduchon classes. This invariant is defined by cone duality and intersection numbers. We will see that they coincide with the volume of K\"ahler classes if the cohomology classes are given by the $(n-1)$-power of K\"ahler classes.

\begin{definition}
\label{def hat vol movable}
Let $X$ be an $n$-dimensional compact K\"ahler manifold, and let $\gamma$ be a movable (or balanced, or Gauduchon) class. Let $\mathcal{E}_1$ be the set of pseudo-effective classes of volume one. Then
the invariant $\mathfrak{M}(\gamma)$ is defined as following:
$$\mathfrak{M}(\gamma) :=\underset{\beta\in \mathcal{E}_1}{inf}\langle\beta, \gamma\rangle ^{\frac{n}{n-1}} .$$
\end{definition}

\begin{remark}
\label{rmk vol hat proj}
In the case when $X$ is a smooth projective variety, we can also define $\mathfrak{M}(\gamma)$ for $\gamma\in \mathcal{M}_{NS}$. In this situation, the parings $\langle\beta, \gamma\rangle$ are the parings of numerical equivalence classes of divisors and curves.
\end{remark}

\begin{remark}
\label{rmk cone coincide}
Recall that we have the cone dualities $\mathcal{E}^{\vee}=\overline{\mathcal{G}}$ and $\mathcal{E}_A ^{\vee}=\overline{\mathcal{B}}$. Indeed, under the assumption of the conjectured transcendental cone duality $\mathcal{E}^{\vee}=\overline{\mathcal{M}}$ (see \cite{BDPP13}), the movable cone $\mathcal{M}$, the balanced cone $\mathcal{B}$ and the Gauduchon cone $\mathcal{G}$ should be the same, that is, $\mathcal{E}^{\vee}=\overline{\mathcal{M}}=\overline{\mathcal{B}}
=\overline{\mathcal{G}}$ (see e.g. \cite{FX14}). This is why we call $\mathfrak{M}$ is an universal invariant associated to  movable, balanced and Gauduchon classes over compact K\"ahler manifolds.
\end{remark}

It is clear that, from its definition, we have
$$\mathfrak{M}(\gamma_1 + \gamma_2)^{\frac{n-1}{n}} \geq \mathfrak{M}(\gamma_1)^{\frac{n-1}{n}}+ \mathfrak{M}( \gamma_2)^{\frac{n-1}{n}}.$$

\begin{proposition}
\label{prop vol hat coin kahler}
Let $X$ be an $n$-dimensional compact K\"ahler manifold, and let $\gamma=\omega^{n-1}$ for some K\"ahler class $\omega$, then we have
$\mathfrak{M}(\gamma)=vol(\omega)$.
\end{proposition}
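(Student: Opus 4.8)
The plan is to establish the two inequalities $\mathfrak{M}(\omega^{n-1})\le vol(\omega)$ and $\mathfrak{M}(\omega^{n-1})\ge vol(\omega)$ separately, using at the outset that for a K\"ahler (hence nef) class one has $vol(\omega)=\omega^n$, and that the set $\mathcal{E}_1$ over which the infimum runs consists exactly of the big classes of volume one (a non-big pseudo-effective class has volume zero and so is never a competitor).

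For the upper bound I would simply exhibit a good competitor. Set $\beta_0=\omega/(\omega^n)^{1/n}$. This is a positive multiple of a K\"ahler class, hence pseudo-effective, and $vol(\beta_0)=(\omega^n)/(\omega^n)=1$, so $\beta_0\in\mathcal{E}_1$. A direct computation gives
\[
\langle\beta_0,\omega^{n-1}\rangle=\frac{\omega^n}{(\omega^n)^{1/n}}=(\omega^n)^{\frac{n-1}{n}},
\]
so that $\langle\beta_0,\omega^{n-1}\rangle^{\frac{n}{n-1}}=\omega^n$, and therefore $\mathfrak{M}(\omega^{n-1})\le\omega^n=vol(\omega)$.

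For the lower bound, the key analytic input is the Khovanskii--Teissier inequality relating a big class $\beta$ and a K\"ahler class $\omega$,
\[
\langle\beta,\omega^{n-1}\rangle\ge vol(\beta)^{1/n}\,(\omega^n)^{\frac{n-1}{n}}.
\]
Granting this, for any $\beta\in\mathcal{E}_1$ we have $vol(\beta)=1$, so $\langle\beta,\omega^{n-1}\rangle\ge(\omega^n)^{\frac{n-1}{n}}$ and hence $\langle\beta,\omega^{n-1}\rangle^{\frac{n}{n-1}}\ge\omega^n$; taking the infimum over $\mathcal{E}_1$ yields $\mathfrak{M}(\omega^{n-1})\ge\omega^n=vol(\omega)$, which combined with the previous paragraph finishes the proof.

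The main obstacle is of course the proof of this Khovanskii--Teissier inequality in the transcendental, non-nef setting, since $\beta$ is only big while the classical inequality is stated for nef classes. The route I would take is to pass to Boucksom's positive intersection product: write the divisorial Zariski decomposition $\beta=\langle\beta\rangle+\{N(\beta)\}$, so that pairing against the nef class $\omega^{n-1}$ gives $\langle\beta,\omega^{n-1}\rangle\ge\langle\beta\rangle\cdot\omega^{n-1}=\langle\beta\cdot\omega^{n-1}\rangle$ (the negative part pairs nonnegatively with $\omega^{n-1}$, and nef classes factor out of the positive product); then apply the log-concavity of $i\mapsto\langle\beta^{i}\cdot\omega^{n-i}\rangle$ at $i=1$, together with $\langle\beta^n\rangle=vol(\beta)$ and $\langle\omega^n\rangle=\omega^n$, to conclude. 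All of this machinery is available over compact K\"ahler manifolds through Boucksom's theory of volumes and positive products. Finally I would note that equality in Khovanskii--Teissier holds precisely when $\beta$ is proportional to $\omega$, so the infimum is in fact attained, namely at $\beta_0$.
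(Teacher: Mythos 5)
Your proof is correct, and the upper bound (testing against $\beta_0=\omega/vol(\omega)^{1/n}$) is exactly the paper's first step. The lower bound, however, is reached by a genuinely different route. The paper does not pass through positive intersection products at all: it invokes Boucksom's singular Calabi--Yau theorem to produce, for each $\beta\in\mathcal{E}_1$, a positive current $T\in\beta$ with $T_{ac}^n=\Phi$ almost everywhere for $\Phi=\omega^n/vol(\omega)$, and then concludes by the pointwise arithmetic--geometric mean inequality $T_{ac}\wedge\omega^{n-1}\geq (T_{ac}^n/\Phi)^{1/n}(\omega^n/\Phi)^{(n-1)/n}\Phi$, integrated over $X$. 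That argument is short and needs only one black box. Your route instead needs the whole positive-product package in the transcendental K\"ahler setting: the inequality $\beta\cdot\omega^{n-1}\geq\langle\beta\cdot\omega^{n-1}\rangle$ (which, by the way, you can get directly from $\mu^*\beta=(\mu^*\beta-E)+E$ with $E$ effective and $\mu^*\omega$ nef, without invoking the divisorial Zariski decomposition or the claimed identity $\langle\beta\rangle\cdot\omega^{n-1}=\langle\beta\cdot\omega^{n-1}\rangle$, which is a nontrivial assertion in its own right), plus Fujita approximation for big transcendental classes and Khovanskii--Teissier for nef classes on the modifications. Since those last two ingredients are themselves established via Demailly regularization and the (singular) Calabi--Yau theorem, your argument is valid but strictly heavier than the paper's, and in effect re-derives the same analytic input one layer up. Your closing remark that equality forces $\beta$ proportional to $\omega$ is not needed and is actually a delicate point in the big non-nef setting; I would drop it.
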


\begin{proof}
Firstly, let $\beta=\frac{\omega}{vol(\omega)^{\frac{1}{n}}}$, it is clear that $$\langle \beta, \omega^{n-1}\rangle=vol(\omega)^{\frac{n-1}{n}}, $$
which implies $\mathfrak{M}(\gamma)\leq vol(\omega)$. On the other hand, we claim that, for any $\beta\in \mathcal{E}$ with $vol(\beta)=1$, we have
$$\langle\beta, \gamma\rangle ^{\frac{n}{n-1}}\geq vol(\omega).$$
This is just the Khovanskii-Teissier inequality which follows from the singular version of Calabi-Yau theorem (see \cite{Bou02a}): there exists a positive $(1,1)$-current $T\in \beta$ such that $$T_{ac}^n = \Phi$$
 almost everywhere, where $\Phi=\omega^n/{vol(\omega)}$ and $T_{ac}$ is the absolutely continuous part of $T$ with respect to Lebesgue measure. Here we use the same symbol $\omega$ to denote a K\"ahler metric in the K\"ahler class $\omega$. Then we have
 $$\langle\beta, \gamma\rangle=\int T\wedge \omega^{n-1}\geq
 \int T_{ac} \wedge \omega^{n-1}\geq \int (\frac{T_{ac}^n}{\Phi})^{\frac{1}{n}}
 (\frac{\omega^n}{\Phi})^{\frac{n-1}{n}}\Phi
 =vol(\omega) ^{\frac{n-1}{n}}.$$
This implies the claim, thus finishing our proof.
\end{proof}

An easy corollary is the strictly positivity of $\mathfrak{M}$.

\begin{corollary}
\label{cor positive vol hat}
Let $X$ be an $n$-dimensional compact K\"ahler manifold, and let $\gamma\in \mathcal{G}$ (resp. $\mathcal{M}$ or $\mathcal{B}$) be an interior point, then we have
$\mathfrak{M}(\gamma)>0$.
\end{corollary}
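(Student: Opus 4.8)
The plan is to reduce the positivity of $\mathfrak{M}$ at an arbitrary interior point to the already-computed value $\mathfrak{M}(\omega^{n-1})=vol(\omega)$ at the $(n-1)$-st power of a K\"ahler class, using the superadditivity of $\mathfrak{M}^{\frac{n-1}{n}}$ recorded in the display just before Proposition \ref{prop vol hat coin kahler}. The whole argument is driven by Proposition \ref{prop vol hat coin kahler} (which itself rests on Khovanskii--Teissier / Calabi--Yau) together with a one-line convex-cone observation.

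First I would rewrite the functional in a form that makes its concavity transparent. Since $t\mapsto t^{\frac{n}{n-1}}$ is increasing on $[0,\infty)$ and all the pairings $\langle\beta,\gamma\rangle$ are non-negative when $\gamma$ lies in the closed dual cone $\overline{\mathcal{G}}=\mathcal{E}^{\vee}$ (resp. $\overline{\mathcal{B}}=\mathcal{E}_A^{\vee}$), the infimum can be pulled outside the power, giving
$$\mathfrak{M}(\gamma)^{\frac{n-1}{n}} = \underset{\beta\in\mathcal{E}_1}{\inf}\,\langle\beta,\gamma\rangle.$$
This is an infimum of linear functionals of $\gamma$, hence a concave, positively one-homogeneous functional; in particular it is non-negative on the whole closed dual cone and superadditive, which is exactly the inequality preceding Proposition \ref{prop vol hat coin kahler}. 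Here I use that $X$ is K\"ahler, so the Bott--Chern, Aeppli and de Rham groups in bidegrees $(1,1)$ and $(n-1,n-1)$ coincide and the pairing is simply $\int_X\alpha\wedge\gamma$; this lets me treat the Gauduchon, balanced and movable cases uniformly, since each of the three cones sits in $H^{n-1,n-1}(X,\mathbb{R})$ and pairs against $\mathcal{E}$.

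Next, fix any K\"ahler class $\omega$ on $X$. The form $\omega^{n-1}$ is $d$-closed and strictly positive, so its class lies in each of $\mathcal{G}$, $\mathcal{B}$ and $\mathcal{M}$, and Proposition \ref{prop vol hat coin kahler} gives $\mathfrak{M}(\omega^{n-1})^{\frac{n-1}{n}}=vol(\omega)^{\frac{n-1}{n}}>0$. Now let $\gamma$ be an interior point of the relevant cone. Since the interior is open, $\gamma-\varepsilon\,\omega^{n-1}$ still lies in the cone, hence in $\mathcal{E}^{\vee}$, for all sufficiently small $\varepsilon>0$, so $\mathfrak{M}(\gamma-\varepsilon\,\omega^{n-1})^{\frac{n-1}{n}}\geq 0$. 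Applying superadditivity and the degree-one homogeneity of $\mathfrak{M}^{\frac{n-1}{n}}$ to the decomposition $\gamma=(\gamma-\varepsilon\,\omega^{n-1})+\varepsilon\,\omega^{n-1}$ yields
$$\mathfrak{M}(\gamma)^{\frac{n-1}{n}} \geq \mathfrak{M}(\gamma-\varepsilon\,\omega^{n-1})^{\frac{n-1}{n}} + \varepsilon\,vol(\omega)^{\frac{n-1}{n}} \geq \varepsilon\,vol(\omega)^{\frac{n-1}{n}} > 0,$$
whence $\mathfrak{M}(\gamma)>0$.

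The only point requiring care is the opening reduction and the claim $\mathfrak{M}(\gamma-\varepsilon\,\omega^{n-1})\geq 0$; both rest on the non-negativity of $\langle\beta,\gamma'\rangle$ for $\beta\in\mathcal{E}_1$ and $\gamma'$ in the closed dual cone. For the Gauduchon and balanced cases this is precisely the cone duality $\mathcal{E}^{\vee}=\overline{\mathcal{G}}$ and $\mathcal{E}_A^{\vee}=\overline{\mathcal{B}}$ of Proposition \ref{prop cone dual}, while for the movable cone one only needs the elementary inclusion $\mathcal{M}\subseteq\mathcal{E}^{\vee}$, so no conjectural input is required. I expect this bookkeeping to be the only delicate step; once it is in place the estimate is immediate, and as a byproduct it even furnishes the explicit lower bound $\mathfrak{M}(\gamma)\geq \varepsilon^{\frac{n}{n-1}}vol(\omega)$ for every admissible $\varepsilon$.
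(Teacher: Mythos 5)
Your argument is correct and is exactly the intended one: the paper leaves this as an ``easy corollary'' of Proposition \ref{prop vol hat coin kahler}, the implicit reasoning being precisely that an interior point dominates $\varepsilon\,\omega^{n-1}$ for small $\varepsilon>0$, so superadditivity (equivalently, monotonicity of the infimum of non-negative pairings over the dual cone) gives $\mathfrak{M}(\gamma)\geq \varepsilon^{\frac{n}{n-1}}vol(\omega)>0$. Your care about well-definedness of the fractional power and the inclusion $\mathcal{M}\subseteq\mathcal{E}^{\vee}$ is sound but adds nothing beyond the paper's route.
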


Next let $\mu:\tilde{X}\rightarrow X$ be a modification between compact K\"ahler manifolds, we want to study the behaviour of $\mathfrak{M}$
under $\mu$. Firstly, we need the following elementary fact on the transform of the volume of pseudo-effective $(1,1)$-classes under bimeromorphic maps.

\begin{lemma}
\label{lemma modification vol (1,1)}
Let $\mu:\tilde{X}\rightarrow X$ be a modification between $n$-dimensional compact K\"ahler manifolds. Assume $\beta\in \mathcal{E}$ is a pseudo-effective class on $X$, then $vol(\beta)=vol(\mu^* \beta)$; Assume $\tilde \beta\in \widetilde{\mathcal{E}}$ is a pseudo-effective class on $\tilde X$, then $vol(\tilde\beta)\leq vol(\mu_* \tilde\beta)$.
\end{lemma}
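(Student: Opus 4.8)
The plan is to reduce both assertions to Boucksom's description of the volume of a pseudo-effective $(1,1)$-class as an absolutely continuous Monge-Amp\`ere mass (\cite{Bou02a,Bou02b,Bou04}): for $\beta\in\mathcal{E}$ one has
$$vol(\beta)=\sup_{T\in\beta,\ T\ge 0}\int_X T_{ac}^n,$$
the supremum ranging over closed positive currents in the class and being attained by currents with minimal singularities. The whole argument will then rest on one geometric fact together with standard current calculus: a modification $\mu$ restricts to a biholomorphism $\mu\colon \tilde X\setminus E\to X\setminus Z$, where $E$ is the exceptional locus and $Z=\mu(E)$ its image, both proper analytic (hence Lebesgue-negligible) subsets; moreover pull-back and push-forward of closed positive $(1,1)$-currents preserve positivity, represent the classes $\mu^*\beta$ and $\mu_*\tilde\beta$, and are compatible with cohomology via the projection formula $\mu_*\mu^*\beta=\beta$.

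I would prove the second (push-forward) inequality first, since it also yields half of the first. Let $\tilde\beta$ be pseudo-effective on $\tilde X$ and let $\tilde T\in\tilde\beta$ be any closed positive current. Then $\mu_*\tilde T$ is a closed positive current in the class $\mu_*\tilde\beta$, so this class is pseudo-effective and $vol(\mu_*\tilde\beta)\ge\int_X(\mu_*\tilde T)_{ac}^n$. Over $X\setminus Z$ the current $\mu_*\tilde T$ is just the transport of $\tilde T$ under the biholomorphism $\mu$, so its absolutely continuous part and the associated Monge-Amp\`ere measure correspond to those of $\tilde T$ on $\tilde X\setminus E$; since $E$ and $Z$ are negligible we get
$$\int_X(\mu_*\tilde T)_{ac}^n\ \ge\ \int_{X\setminus Z}(\mu_*\tilde T)_{ac}^n=\int_{\tilde X\setminus E}\tilde T_{ac}^n=\int_{\tilde X}\tilde T_{ac}^n.$$
Taking the supremum over $\tilde T\in\tilde\beta$ gives $vol(\tilde\beta)\le vol(\mu_*\tilde\beta)$.

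For the first assertion I would argue by two inequalities. The bound $vol(\mu^*\beta)\ge vol(\beta)$ is the mirror image of the above: for a closed positive $T\in\beta$ the pull-back $\mu^*T$ is a closed positive current in $\mu^*\beta$ whose absolutely continuous part agrees, through the same biholomorphism, with that of $T$ away from $E$, so $\int_{\tilde X}(\mu^*T)_{ac}^n=\int_X T_{ac}^n$, and taking the supremum over $T\in\beta$ gives $vol(\mu^*\beta)\ge vol(\beta)$. For the reverse inequality I would simply apply the already-established push-forward inequality to $\tilde\beta=\mu^*\beta$, together with the projection formula, obtaining $vol(\mu^*\beta)\le vol(\mu_*\mu^*\beta)=vol(\beta)$. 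Combining the two yields $vol(\mu^*\beta)=vol(\beta)$.

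The cohomological bookkeeping (positivity and class of $\mu^*T$, $\mu_*\tilde T$, and $\mu_*\mu^*=\mathrm{id}$) is routine. The genuinely delicate step, and the one I expect to be the main obstacle, is the interchange of the absolutely continuous Monge-Amp\`ere mass with pull-back and push-forward across the exceptional locus. The crucial point is that this mass does not charge the Lebesgue-negligible analytic sets $E$ and $Z$, so the biholomorphism $\mu\colon\tilde X\setminus E\to X\setminus Z$ transports the masses without loss or gain, and nothing is contributed over $E$. This is exactly where Boucksom's results on the absolutely continuous part of positive currents and on the volume of pseudo-effective classes must be invoked with care, rather than in any of the formal manipulations; the asymmetry producing the mere inequality in the second statement comes precisely from the fact that $\mu^*\mu_*\tilde\beta\ne\tilde\beta$ in general, so extra mass concentrated along $E$ can be genuinely lost under $\mu_*$.
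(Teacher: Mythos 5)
Your proposal is correct and follows essentially the same route as the paper: both reduce to Boucksom's description of the volume as the supremum of $\int T_{ac}^n$ over positive currents in the class, transport currents by $\mu^*$ and $\mu_*$, and use that the absolutely continuous Monge-Amp\`ere mass does not charge the Lebesgue-negligible analytic sets $E$ and $Z$, so the masses match across the biholomorphism $\tilde X\setminus E\to X\setminus Z$. The only difference is organizational (you establish the push-forward inequality first and deduce $vol(\mu^*\beta)\le vol(\beta)$ from it via $\mu_*\mu^*\beta=\beta$, whereas the paper argues the two inequalities for $\mu^*\beta$ directly and then says the push-forward case is similar), which does not change the substance.
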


\begin{proof}
Recall that the volume of $\beta$ is defined to be the supremum of Monge-Amp\`{e}re mass, that is,
$$vol(\beta)=\underset{T}{sup} \int T_{ac}^n,$$
where $T$ ranges among all positive $(1,1)$-currents in the class $\beta$. For any positive current $T\in \beta$, we obtain a positive current $\mu^*T \in\mu^* \beta$. By the definition of the absolutely part with respect to Lebesgue measure, we have $(\mu^*T)_{ac}=\mu^*T_{ac}$. And $T_{ac}$ is a $(1,1)$-form with $L^1 _{loc}$ coefficients. In particular, analytic subset is of zero measure with respect to the measure $T_{ac}^n$, which yields
$$\int (\mu^*T)_{ac}^n =\int T_{ac}^n.$$
This implies $vol(\mu^* \beta)\geq vol(\beta)$. On the other hand, for any positive current $\tilde T \in\mu^* \beta$, we get a positive current $\mu_*\tilde T\in \beta$. By $(\mu_*\tilde T)_{ac} =\mu_*\tilde T_{ac}$, we obtain $vol(\mu^* \beta)\leq vol(\beta)$. All in all we have $vol(\beta)=vol(\mu^* \beta)$.
Similarly, it is also easy to see $vol(\tilde\beta)\leq vol(\mu_* \tilde\beta)$ for any $\tilde\beta \in \widetilde{\mathcal{E}}$.
\end{proof}

Now we can show that $\mathfrak{M}$ has the same property as $vol$ under bimeromorphic maps. We only state the result for K\"ahler manifolds. It is clear that $\mathfrak{M}$ admits an extension to the closure of $\mathcal{G}$ (resp. $\mathcal{M}$ or $\mathcal{B}$).

\begin{proposition}
\label{prop modification vol_ G B}
Let $\mu:\tilde{X}\rightarrow X$ be a modification between $n$-dimensional compact K\"ahler manifolds. Assume $\gamma\in \overline{\mathcal{G}}$ (resp. $\overline{\mathcal{M}}$ or $\overline{\mathcal{B}}$ ), then $\mathfrak{M}(\gamma)=\mathfrak{M}(\mu^* \gamma)$; Assume $\tilde \gamma\in \overline{\widetilde{\mathcal{G}}}$ (resp. $\overline{\widetilde{\mathcal{M}}}$ or $\overline{\widetilde{\mathcal{B}}}$), then $\mathfrak{M}(\tilde\gamma)\leq \mathfrak{M}(\mu_* \tilde\gamma)$.
\end{proposition}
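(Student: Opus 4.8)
The plan is to unwind the definition of $\mathfrak{M}$ from Definition \ref{def hat vol movable} and to reduce the whole statement to two facts that are already at hand: the projection formula together with $\mu_*\mu^* = \mathrm{id}$ on cohomology (valid since $\mu$ is a modification, hence of degree one), and the volume comparison of Lemma \ref{lemma modification vol (1,1)}. Throughout I would use that the pairing $\langle\beta,\gamma\rangle$ is nonnegative whenever $\beta$ is pseudo-effective and $\gamma\in\overline{\mathcal{G}}$ (resp. $\overline{\mathcal{M}}$ or $\overline{\mathcal{B}}$) — this is exactly the relevant cone duality — and that $vol$ is homogeneous of degree $n$, so that renormalizing a class rescales the constraint set $\mathcal{E}_1$ in a controlled way.

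For the equality $\mathfrak{M}(\gamma)=\mathfrak{M}(\mu^*\gamma)$ I would prove the two inequalities separately. For $\mathfrak{M}(\mu^*\gamma)\le\mathfrak{M}(\gamma)$, take any test class $\beta\in\mathcal{E}_1$ on $X$; by Lemma \ref{lemma modification vol (1,1)} its pullback $\mu^*\beta$ again has volume one and is pseudo-effective, hence is an admissible test class on $\tilde X$, while the projection formula gives $\langle\mu^*\beta,\mu^*\gamma\rangle=\langle\beta,\mu_*\mu^*\gamma\rangle=\langle\beta,\gamma\rangle$. Thus $\mathfrak{M}(\mu^*\gamma)\le\langle\beta,\gamma\rangle^{n/(n-1)}$, and taking the infimum over $\beta$ yields the inequality.

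For the reverse inequality $\mathfrak{M}(\gamma)\le\mathfrak{M}(\mu^*\gamma)$, I would take any test class $\tilde\beta$ of volume one on $\tilde X$ and push it down. Here Lemma \ref{lemma modification vol (1,1)} only gives $v:=vol(\mu_*\tilde\beta)\ge 1$, so I renormalize by setting $\beta':=\mu_*\tilde\beta/v^{1/n}$, which has volume one by homogeneity and hence lies in $\mathcal{E}_1$. Using the projection formula, the bound $v^{1/n}\ge 1$, and nonnegativity of the pairing,
$$\langle\beta',\gamma\rangle = v^{-1/n}\langle\mu_*\tilde\beta,\gamma\rangle = v^{-1/n}\langle\tilde\beta,\mu^*\gamma\rangle \le \langle\tilde\beta,\mu^*\gamma\rangle,$$
so $\mathfrak{M}(\gamma)\le\langle\tilde\beta,\mu^*\gamma\rangle^{n/(n-1)}$; taking the infimum over $\tilde\beta$ finishes this direction. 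The same two moves give the second assertion, where only one inequality is claimed: pulling back an admissible $\beta\in\mathcal{E}_1$ (volume preserved) shows $\mathfrak{M}(\tilde\gamma)\le\langle\mu^*\beta,\tilde\gamma\rangle^{n/(n-1)}=\langle\beta,\mu_*\tilde\gamma\rangle^{n/(n-1)}$, whence $\mathfrak{M}(\tilde\gamma)\le\mathfrak{M}(\mu_*\tilde\gamma)$.

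The only genuine subtlety, and the step I would watch most carefully, is the renormalization in the reverse inequality: it is precisely the asymmetry in Lemma \ref{lemma modification vol (1,1)} — pullback preserves volume exactly, whereas pushforward may strictly increase it — that forces an inequality rather than an equality for $\mu_*$. One must check that the resulting factor $v^{-1/n}\le 1$ is compatible with the sign of the pairing so that the estimate runs in the favorable direction, and that $\beta'$ still lies in $\mathcal{E}_1$ after rescaling. Finally, since all the quantities involved are continuous and $\mathfrak{M}$ extends to the closures $\overline{\mathcal{G}},\overline{\mathcal{M}},\overline{\mathcal{B}}$, it suffices to argue for classes in the open cones and pass to the limit, so no separate treatment of boundary classes is required.
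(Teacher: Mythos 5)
Your proposal is correct and follows essentially the same route as the paper: pulling back test classes (volume preserved by Lemma \ref{lemma modification vol (1,1)}) for one inequality, and pushing forward with the renormalization $\mu_*\tilde\beta/vol(\mu_*\tilde\beta)^{1/n}$ together with $vol(\mu_*\tilde\beta)\geq 1$ and nonnegativity of the pairing for the other. The subtlety you flag — the asymmetry of the volume lemma forcing only an inequality in the pushforward direction — is exactly the point the paper's argument turns on.
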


\begin{proof}
We firstly consider the pull-back case. By Lemma
\ref{lemma modification vol (1,1)}, for any fixed
$\tilde\beta \in \tilde{\mathcal{E}}$
with $vol(\tilde\beta)=1$, we have
$$\langle \tilde\beta, \mu^*\gamma \rangle
= \langle \mu_*\tilde\beta, \gamma \rangle
\geq \langle \frac{\mu_*\tilde \beta}{vol(\mu_*\tilde \beta)^{1/n}}, \gamma \rangle
\geq \mathfrak{M}(\gamma)^{\frac{n-1}{n}}.$$
This clearly implies $\mathfrak{M}(\mu^*\gamma)\geq \mathfrak{M}(\gamma)$. For the other direction, for any fixed $\beta\in \mathcal{E}$ with $vol(\beta)=1$, using Lemma \ref{lemma modification vol (1,1)} again, we have
$$\langle\beta, \gamma\rangle
=\langle\mu_*(\mu^*\beta), \gamma\rangle
=\langle\mu^*\beta, \mu^*\gamma\rangle
\geq  \mathfrak{M}(\mu^*\gamma)^{\frac{n-1}{n}}.$$
Thus, $\mathfrak{M}(\mu^*\gamma)\leq \mathfrak{M}(\gamma)$, and as a consequence, we finish the proof of $\mathfrak{M}(\mu^*\gamma)= \mathfrak{M}(\gamma)$. For the push-forward case, the proof of
$\mathfrak{M}(\tilde\gamma)\leq \mathfrak{M}(\mu_* \tilde\gamma)$ is the same.
\end{proof}

We remark that the inequality $\mathfrak{M}(\tilde\gamma)\leq \mathfrak{M}(\mu_* \tilde\gamma)$ is important in the characterization of the volume of divisors in the following section.

\subsection{Volume characterization}

In this section, using the invariant $\mathfrak{M}$ introduced in the previous section, we show the volume of divisors can be characterized by cone duality.

\begin{definition}
\label{def vol hat 1}
Let $X$ be an $n$-dimensional smooth projective variety, and let $\mathcal{M}_{NS}$ be the cone of movable curves. Then $\mathcal{M}_{NS,1}$ is defined to be the subset containing all $\gamma\in \mathcal{M}_{NS}$ with $\mathfrak{M}(\gamma)=1$. Similarly, for compact K\"ahler manifolds, we can define
$\mathcal{G}_1, \mathcal{M}_1$ and $\mathcal{B}_1$ in the same way.
\end{definition}

\begin{theorem}
\label{thm characterize vol of divisor}
\label{big divisor}
Let $X$ be an $n$-dimensional smooth projective variety and
let $\alpha\in N^1(X, \mathbb{R})$ be a numerical equivalence class of divisor. Then the volume of $\alpha$ can be
characterized as following:
$$
(\star)\ vol(\alpha)^{\frac{1}{n}} = \underset{
{\gamma \in {\mathcal{M}}_{NS,1}} }{inf} max (\langle \alpha, \gamma \rangle, 0).
$$
Conversely, this volume characterization implies
the cone duality
$\mathcal{E}_{NS} ^{\vee} = \overline{\mathcal{M}}_{NS}$.
Moreover, we can also replace the cone of movable curves by Gauduchon or balanced cone, that is,
$$
vol(\alpha)^{\frac{1}{n}}
=\underset
{\gamma \in {\mathcal{G}_1}} {inf} max(\langle \alpha, \gamma \rangle,0)
=\underset
{\gamma \in {\mathcal{B}_1}} {inf} max(\langle \alpha, \gamma \rangle,0)
.
$$
\end{theorem}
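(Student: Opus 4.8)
The plan is to establish $(\star)$ first for big classes $\alpha$ (those with $vol(\alpha)>0$), then to treat the degenerate classes by approximation and separation, and finally to transfer the statement to the Gauduchon and balanced cones via the smoothing results of the previous subsection. Throughout I will use that $\gamma\in\mathcal M_{NS,1}$ means exactly $\mathfrak M(\gamma)=1$, and that, by Definition \ref{def hat vol movable} together with monotonicity of $t\mapsto t^{(n-1)/n}$ and nonnegativity of the pairings, $\mathfrak M(\gamma)^{(n-1)/n}=\inf_{\beta\in\mathcal E_1}\langle\beta,\gamma\rangle$. The lower bound is then immediate: for big $\alpha$ and any $\gamma\in\mathcal M_{NS,1}$ the class $\beta_\alpha:=\alpha/vol(\alpha)^{1/n}$ lies in $\mathcal E_1$, so $\langle\beta_\alpha,\gamma\rangle\ge\inf_{\beta\in\mathcal E_1}\langle\beta,\gamma\rangle=\mathfrak M(\gamma)^{(n-1)/n}=1$, i.e. $\langle\alpha,\gamma\rangle\ge vol(\alpha)^{1/n}$; since $\alpha$ is pseudo-effective this equals $\max(\langle\alpha,\gamma\rangle,0)$, whence $\inf_{\gamma\in\mathcal M_{NS,1}}\max(\langle\alpha,\gamma\rangle,0)\ge vol(\alpha)^{1/n}$.

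For the reverse inequality I must exhibit near-optimal movable classes. I would invoke Fujita approximation (\cite{BDPP13}): for every $\varepsilon>0$ there is a modification $\mu:\tilde X\to X$ and a decomposition $\mu^*\alpha=\tilde A+\tilde E$ with $\tilde A$ ample, $\tilde E$ effective, $\tilde A^n\ge vol(\alpha)-\varepsilon$, together with the orthogonality estimate $\tilde E\cdot\tilde A^{n-1}\to0$ as $\varepsilon\to0$. Setting $\gamma_0=\mu_*(\tilde A^{n-1})\in\mathcal M_{NS}$, Proposition \ref{prop vol hat coin kahler} applied on $\tilde X$ gives $\mathfrak M(\tilde A^{n-1})=vol(\tilde A)=\tilde A^n$, and Proposition \ref{prop modification vol_ G B} gives $\mathfrak M(\gamma_0)\ge\mathfrak M(\tilde A^{n-1})=\tilde A^n$, while the projection formula gives $\langle\alpha,\gamma_0\rangle=\langle\tilde A+\tilde E,\tilde A^{n-1}\rangle=\tilde A^n+\tilde E\cdot\tilde A^{n-1}$. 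Normalizing $\gamma:=\gamma_0/\mathfrak M(\gamma_0)^{(n-1)/n}\in\mathcal M_{NS,1}$ then yields $\langle\alpha,\gamma\rangle\le(\tilde A^n+\tilde E\cdot\tilde A^{n-1})/(\tilde A^n)^{(n-1)/n}\to vol(\alpha)^{1/n}$. I expect this to be the crux of the argument: the orthogonality estimate is precisely what prevents the numerator from overshooting $vol(\alpha)$, while the push-forward inequality $\mathfrak M(\tilde\gamma)\le\mathfrak M(\mu_*\tilde\gamma)$ of Proposition \ref{prop modification vol_ G B} is what controls the normalizing denominator after pushing forward.

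For the degenerate classes I distinguish two cases. If $\alpha$ is pseudo-effective with $vol(\alpha)=0$, I set $\alpha_j=\alpha+\tfrac1j H$ with $H$ ample; these are big with $vol(\alpha_j)\to0$, and choosing $\gamma_j\in\mathcal M_{NS,1}$ almost realizing $vol(\alpha_j)^{1/n}$ and using $\langle\alpha,\gamma_j\rangle\le\langle\alpha_j,\gamma_j\rangle$ gives $0\le\inf_{\gamma}\langle\alpha,\gamma\rangle\le\langle\alpha,\gamma_j\rangle\to0$, matching $vol(\alpha)=0$. If $\alpha\notin\mathcal E_{NS}$, then $vol(\alpha)=0$ and, by the BDPP duality $\mathcal E_{NS}=\overline{\mathcal M}_{NS}^\vee$, there is a movable class $\gamma$, which I may take interior and normalize using Corollary \ref{cor positive vol hat}, with $\langle\alpha,\gamma\rangle<0$; then $\max(\langle\alpha,\gamma\rangle,0)=0$ forces the infimum to be $0$ as well.

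For the converse I would argue in convex geometry without invoking BDPP: granting $(\star)$ and the elementary fact that $vol(\alpha)>0$ characterizes the interior of $\mathcal E_{NS}$, the formula shows $\alpha\in\mathcal E_{NS}^\circ$ if and only if $\langle\alpha,\cdot\rangle$ is bounded below by a positive constant on $\mathcal M_{NS,1}$, i.e. if and only if $\alpha$ lies in the interior of $\overline{\mathcal M}_{NS}^\vee$; taking closures gives $\mathcal E_{NS}=\overline{\mathcal M}_{NS}^\vee$ and hence, dualizing the closed convex cones, $\mathcal E_{NS}^\vee=\overline{\mathcal M}_{NS}$, the one delicate point being that $\mathcal M_{NS,1}$ should serve as a compact base of the movable cone. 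Finally, for the Gauduchon and balanced cones the lower bound is identical (again $\mathfrak M(\gamma)=1$ forces $\inf_{\beta\in\mathcal E_1}\langle\beta,\gamma\rangle=1$, whence $\langle\alpha,\gamma\rangle\ge vol(\alpha)^{1/n}$ for every $\gamma\in\mathcal G_1$ or $\mathcal B_1$), while for the upper bound I note that the near-optimal classes $\mu_*(\tilde A^{n-1})$ constructed above can, by Proposition \ref{prop smoothing gauduchon} (resp. Proposition \ref{prop smoothing balanced}), be represented by a Gauduchon (resp. balanced) metric in the same class; since this preserves both $\mathfrak M$ and the pairing with $\alpha$, the same approximating sequence shows the corresponding infima are $\le vol(\alpha)^{1/n}$, and equality follows.
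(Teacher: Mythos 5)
Your proposal follows the paper's own proof essentially step for step: the lower bound comes from the definition of $\mathfrak{M}$, the upper bound from Fujita approximation with the BDPP orthogonality estimate combined with Propositions \ref{prop vol hat coin kahler} and \ref{prop modification vol_ G B}, the degenerate classes are handled by perturbation by an ample class and by the known duality, and the Gauduchon/balanced statements follow from the smoothing results of Propositions \ref{prop smoothing gauduchon} and \ref{prop smoothing balanced}. The one point you flag as delicate --- that $\mathcal{M}_{NS,1}$ should be a compact base of the movable cone --- is not actually needed: in the converse direction one only has to show that an interior point $\alpha$ of $\overline{\mathcal{M}}_{NS}^{\vee}$ satisfies $\langle\alpha,\gamma\rangle\geq \varepsilon\langle A,\gamma\rangle\geq \varepsilon\, vol(A)^{1/n}$ for all $\gamma\in\mathcal{M}_{NS,1}$, which follows from the already-established lower bound applied to the ample class $A$, exactly as in the paper.
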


\begin{proof}
We first consider the case when $\alpha$ is not pseudo-effective, by the definition of volume of divisors, it is clear $vol(\alpha)=0$. On the other hand, the cone duality $\mathcal{E}_{NS} ^\vee = \overline{\mathcal{M}}_{NS}$ implies there exists some interior point $\gamma\in \mathcal{M}_{NS}$ such that $\langle\alpha, \gamma\rangle<0$. Furthermore, using Corollary \ref{cor positive vol hat}, we can even normalize $\gamma$ such that $\mathfrak{M}(\gamma)=1$.
Thus $\underset{
{\gamma \in {\mathcal{M}}_{NS,1}} }{inf} max (\langle \alpha, \gamma \rangle, 0)=0=vol(\alpha)^{\frac{1}{n}}$.\\

Next consider the case when $\alpha$ is given by a big divisor.
By the very definition of $\mathfrak{M}$, for any $\gamma\in \mathcal{M}_{NS}$, it is clear that
$$
\langle\frac{\alpha}{vol(\alpha)^{1/n}}, \gamma\rangle
\geq \mathfrak{M}(\gamma)^{\frac{n-1}{n}},
$$
or equivalently,
$$
\langle\alpha, \gamma\rangle
\geq vol(\alpha)^{\frac{1}{n}}\mathfrak{M}(\gamma)^{\frac{n-1}{n}}.
$$
In particular, for any $\gamma\in \mathcal{M}_{NS,1}$, this yields
$
\langle\alpha, \gamma\rangle
\geq vol(\alpha)^{\frac{1}{n}}.
$
Thus we have
$$
vol(\alpha)^{\frac{1}{n}}\leq \underset
{\gamma \in {\mathcal{M}}_{NS,1}} {inf} \langle \alpha, \gamma \rangle.
$$
In order to prove the equality, we need to show that, for any $\varepsilon>0$, there exists a movable class $\gamma_\varepsilon\in \mathcal{M}_{NS,1}$ such that
$$
\langle \alpha, \gamma_\varepsilon\rangle \leq vol(\alpha)^{\frac{1}{n}}+\varepsilon.
$$
This mainly depends on approximating Zariski decomposition of K\"ahler currents and orthogonality estimates of the decomposition (see \cite{BDPP13}). Since $\alpha$ is given by a big divisor, for any $\delta>0$, there exists a modification $\mu_\delta :X_\delta \rightarrow X$ such that $\mu_\delta ^* \alpha =\beta_\delta +[E_\delta]$ with $\beta_\delta$ given by an ample divisor and $E_\delta$ given by an effective divisor. Moreover, we also have
\begin{equation}
\label{eq approximation zariski ineq}
vol(\alpha)-\delta \leq vol(\beta_\delta)\leq vol(\alpha)
\end{equation}
and
\begin{equation}
\label{eq orthogonality zariski ineq}
\langle [E_\delta], \beta_\delta ^{n-1}\rangle
\leq
c(vol(\alpha)-vol(\beta_\delta))^{1/2}
\end{equation}
where $c$ is a positive constant depending only on the class $\alpha$ and dimension $n$. Applying (\ref{eq approximation zariski ineq}) and (\ref{eq orthogonality zariski ineq}) to $\langle \alpha, \mu_{\delta*}\beta_\delta ^{n-1}\rangle$, we get
\begin{align}
\label{eq after orthogonality}
\langle \alpha, \mu_{\delta*}(\beta_\delta ^{n-1})\rangle
&= \langle \mu_\delta ^*\alpha, \beta_\delta ^{n-1}\rangle\\
&= vol(\beta_\delta)+\langle [E_\delta], \beta_\delta ^{n-1}\rangle\\
&\leq  vol(\alpha)+\mathbf{O}(\delta^{1/2}).
\end{align}
Next by Proposition \ref{prop vol hat coin kahler} and Proposition \ref{prop modification vol_ G B}, we know that
\begin{equation}
\label{eq push-forward}
\mathfrak{M}(\mu_{\delta*}(\beta_\delta ^{n-1}))\geq
\mathfrak{M}{(\beta_\delta ^{n-1})}=vol(\beta_\delta).
\end{equation}
We claim that $\gamma_\delta:={\mu_{\delta*}(\beta_\delta ^{n-1})}/{\mathfrak{M}(\mu_{\delta*}(\beta_\delta ^{n-1}))^{\frac{n-1}{n}}}$ is our desired movable class. Firstly, by the definition of $\mathfrak{M}$, it is obvious that $\mathfrak{M}(\gamma_\delta)=1$. Secondly, by using (\ref{eq approximation zariski ineq}) and (\ref{eq push-forward}), we can estimate
$\langle\alpha, \gamma_\delta\rangle$ as following:
\begin{align}
\langle\alpha, \gamma_\delta\rangle
&\leq \langle\mu_\delta ^*\alpha, \frac{\beta_\delta ^{n-1}}{vol(\beta_\delta)^{n-1/n}}\rangle\\
&\leq
vol(\alpha)^{1/n}+
[\frac{vol(\alpha)}{(vol(\alpha)-\delta)^{\frac{n-1}{n}}}
-vol(\alpha)^{1/n}]
+\mathbf{O}(\delta^{1/2})
\end{align}
Thus, for any $\varepsilon>0$, we can choose some $\delta(\varepsilon)>0$, such that $\gamma_{\delta(\varepsilon)}$ is our desired movable class. In summary, we have finished the proof of the equality $$vol(\alpha)^{\frac{1}{n}} = \underset
{\gamma \in {\mathcal{M}}_{NS,1}} {inf} \langle \alpha, \gamma \rangle$$
for big class $\alpha$.\\

In the case when $\alpha$ lies on the boundary of $\mathcal{E}_{NS}$, for any $\varepsilon>0$ and ample divisor $A$, apply the above proved equality for $\alpha+\varepsilon A$, we have
$$ vol(\alpha+\varepsilon A)^{\frac{1}{n}} = \underset
{\gamma \in {\mathcal{M}}_{NS,1}} {inf} \langle \alpha+\varepsilon A, \gamma \rangle.$$
Take ${inf}$ on both sides with respect to
${\varepsilon>0} $, we get the equality for boundary class.
\\

Now we show that $(\star)$ implies
$\mathcal{E}_{NS} ^\vee = \overline{\mathcal{M}}_{NS}$. It is obvious $\mathcal{E}_{NS}\subseteq \overline{\mathcal{M}}_{NS} ^\vee$. In order to prove the converse inclusion, we only need to show: if $\alpha$ is an interior point of $\overline{\mathcal{M}}_{NS} ^\vee$, then $\alpha$ is also an interior point of $\mathcal{E}_{NS}$ (or equivalently, $vol(\alpha)>0$). Fix an ample divisor $A$. Since $\alpha$ is an interior point of $\overline{\mathcal{M}}_{NS} ^\vee$, for $\varepsilon>0$ small, $\alpha-\varepsilon A$ also lies in the interior of $\overline{\mathcal{M}}_{NS} ^\vee$. In particular, we have $\langle\alpha, \gamma\rangle>\langle\varepsilon A, \gamma\rangle$ for any $\gamma\in \overline{\mathcal{M}}_{NS}\setminus [0]$. Then $(\star)$ implies
$$vol(\alpha)^{\frac{1}{n}}= \underset{
{\gamma \in {\mathcal{M}}_{NS,1}} }{inf} max (\langle \alpha, \gamma \rangle, 0)
\geq \varepsilon vol(A)^{\frac{1}{n}}>0.$$

For the volume characterization by Gauduchon or balanced cone, from the proof for movable cone, one can see that if we can show $\gamma_{\delta(\varepsilon)}$ can be smoothed to be a Gauduchon or balanced class, then we have the desired equality. And this just follows from the results of Proposition \ref{prop smoothing gauduchon} and Proposition \ref{prop smoothing balanced}.
\end{proof}

\begin{remark}
\label{rmk vol kahler conj}
Let $X$ be an $n$-dimensional compact K\"ahler manifold. Under the assumption of the conjectured weak transcendental holomorphic Morse inequality, that is, $$vol(\alpha-\beta)\geq \alpha^n-n\alpha^{n-1}\cdot\beta$$ for any nef classes $\alpha,\beta$ (for recent progress of this problem, one can see \cite{Xia13}, \cite{Pop14}), then we will also have orthogonality estimates for interior points of $\mathcal{E}$ (see \cite{BDPP13}). By the arguments above, we will have volume characterization for any Bott-Chern $(1,1)$-class $\alpha$, that is,
$$\ vol(\alpha)^{\frac{1}{n}} = \underset{
{\gamma \in {\mathcal{M}}_{1}} }{inf} max (\langle \alpha, \gamma \rangle, 0). $$
Moreover, this implies the cone duality $\mathcal{E}^\vee =\overline{\mathcal{M}}$. Thus it is natural to ask whether one can prove this volume characterization without using orthogonality estimates of approximation Zariski decomposition. And this also provides new perspectives to prove the conjectured cone duality $\mathcal{E}^\vee =\overline{\mathcal{M}}$.
\end{remark}

\section{Volume functional for 1-cycles}

\subsection{Definition and properties}
Inspired by Theorem \ref{thm characterize vol of divisor}, using cone dualities, we introduce a volume functional for the numerical equivalence class of curves over smooth projective varieties and a
volume functional for Bott-Chern $(n-1,n-1)$-classes over compact K\"ahler manifolds.
For smooth projective variety, we have the ample cone $Amp$ generated by ample divisors and the cone $\overline{NE}$ generated by irreducible curves. Then we have the cone duality $$Amp^{\vee}=\overline{NE}$$ which is just Kleiman's criterion. For $n$-dimensional compact K\"ahler manifold, we have K\"ahler cone $\mathcal{K}$ generated by K\"ahler classes and the cone $\mathcal{N}$ generated by $d$-closed positive $(n-1,n-1)$-currents. Then we have the cone duality $$\mathcal{K}^{\vee}=\mathcal{N}$$ which follows from Demailly-Paun's numerical characterization of K\"ahler cone (see \cite{DP04}).

Now we can give the following definition.
\begin{definition}
\label{def vol n ne}
(1) Let $X$ be an $n$-dimensional smooth projective variety, and let $\gamma\in N_1(X, \mathbb{R})$ be a numerical equivalence class of curve. Let $Amp_1$ be the set containing all numerical classes of ample divisors of volume one. Then the volume of $\gamma$ is defined to be $$\widehat{vol}_{\overline{NE}} (\gamma) =\underset
{\beta \in Amp_1}{inf} max (\langle \beta, \gamma \rangle, 0)^{\frac{n}{n-1}}.$$
\\
(2) Let $X$ be an $n$-dimensional compact K\"ahler manifold, and let $\gamma\in H^{n-1, n-1}_{BC}(X, \mathbb{R})$ be a Bott-Chern $(n-1,n-1)$-class. Let $\mathcal{K}_1$ be the set containing all K\"ahler classes of volume one. Then the volume of $\gamma$ is defined to be $$\widehat{vol}_{\mathcal{N}} (\gamma) =\underset
{\gamma \in {\mathcal{K}_1}} {inf} max (\langle \beta, \gamma \rangle,0)^{\frac{n}{n-1}}.$$
\end{definition}

\begin{remark}
\label{rmk compare movable}
In the volume characterization of divisors, using the cone duality $\mathcal{E}_{NS}^\vee = \overline{\mathcal{M}}_{NS}$ (or the conjectured $\mathcal{E}^\vee = \overline{\mathcal{M}}$), we introduce an invariant $\mathfrak{M}$ for movable classes (see Definition \ref{def hat vol movable}). Now $\widehat{vol}_{\mathcal{N}}$ gives another invariant of movable classes when it restricts on $\overline{\mathcal{M}}$. From their definitions, it is clear we have $\mathfrak{M}(\gamma)\leq \widehat{vol}_{\mathcal{N}}(\gamma)$ for any $\gamma\in \overline{\mathcal{M}}$. Unlike $vol$ giving an uniform volume functional on $\mathcal{E}$ and $\overline{\mathcal{K}}$, \emph{we do not know whether they would coincide on the movable cone}. In general, the nef cone $\overline{\mathcal{K}}$ can be strictly contained in $\mathcal{E}$, it seems possible that $\mathfrak{M}$ may be smaller than $\widehat{vol}_{\mathcal{N}}$. However, if $X$ is a projective or compact K\"ahler surface,
%since the positive part of Zariski decomposition is nef with the same volume and is orthogonal to the negative part,
both our volume functional $\widehat{vol}_{\mathcal{N}}$ (or $\widehat{vol}_{\overline{NE}}$) and $\mathfrak{M}$ coincide with the usual volume for pseudo-effective classes.
\end{remark}

\begin{example}
\label{eg vol}
To illustrate the definition of volume functional for 1-cycles, we propose to do some concrete calculations on an example similar to the one due to Cutkosky \cite{Cut86} (it is also contained in \cite{Bou04}). Let $Y$ be a smooth projective surface, and let $D, H$ be two very ample divisors over $Y$. Let $X=\mathbb{P}(\mathcal{O}(D)\oplus \mathcal{O}(-H))$ with its canonical projection $\pi: X \rightarrow Y$. Denote by $L=\mathcal{O}_X (1)$ the tautological bundle of $X$, then the nef cone $\mathcal{K}_X$ of $X$ is generated by $\pi^* \mathcal{K}_Y$ and $\pi^*H +L$. In Cutkosky's example, $Y$ is an Abelian surface (or more generally, a projective surface with $\overline{\mathcal{K}}_Y=\mathcal{E}_Y$). For simplicity, we consider the very simple case $Y=\mathbb{P}^2$ with $D=\mathcal{O}(d), H=\mathcal{O}(1)$, then we have $$L^3=(d-1)^2 +d, \pi^*H^2 \cdot L=1, \pi^*H \cdot L^2=d-1, \pi^*H^3=0.$$ Let $\alpha= a \pi^*H +b (\pi^*H +L)$ with $a, b\in \mathbb{R}_{+}$ be a nef class, then the volume of $\alpha$ is as following
$$vol(\alpha)=b^3 ((d-1)^2)+d)+ 3b^2(a+b)(d-1)+3(a+b)^2 b.$$
Consider the 1-cycle $\gamma(x,y)= x \pi^*H^2 +y \pi^*H\cdot L$ with $x, y\geq 0$, then we have
$$\langle\alpha, \gamma(x,y)\rangle= (a+b)y+bx+by(d-1).$$
From the above expressions, we have an explicit formula of $\widehat{vol}_{\overline{NE}}$. In particular, if we take $d=1$, then
$$\widehat{vol}_{\overline{NE}}(\gamma(x,y))=\underset{\underset{a,b\geq 0}{b^3+3(a+b)^2 b=1}}{inf}(by+(a+b)x)^\frac{3}{2}.$$
\end{example}

The volume functionals $\widehat{vol}_{\mathcal{N}}$ and
$\widehat{vol}_{\overline{NE}}$ have many nice properties. For simplicity, we only state the result for $\widehat{vol}_{\mathcal{N}}$. The argument for $\widehat{vol}_{\overline{NE}}$ is similar.

\begin{theorem}
\label{thm vol_N distinguish}
Let $X$ be an $n$-dimensional compact K\"ahler manifold. Then $\widehat{vol}_{\mathcal{N}}$ has the following properties:\\
(1) $\widehat{vol}_{\mathcal{N}}^{\frac{n-1}{n}}$ is concave and homogeneous of degree one.\\
(2) $\widehat{vol}_{\mathcal{N}}$ is continuous on the whole vector space $H^{n-1, n-1}_{BC}(X, \mathbb{R})$.\\
(3) $\gamma \in \mathcal{N}^{\circ}$ if and only if $\widehat{vol}_{\mathcal{N}}(\gamma)>0$.
\end{theorem}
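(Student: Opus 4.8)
The plan is to handle the three assertions by reducing everything to a single vanishing statement on the boundary $\partial\mathcal{N}$, which I expect to be the only real difficulty.

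\emph{Homogeneity and concavity (1).} Homogeneity is immediate: for $t>0$ one has $\max(\langle\beta,t\gamma\rangle,0)=t\,\max(\langle\beta,\gamma\rangle,0)$, so $\widehat{vol}_{\mathcal{N}}(t\gamma)=t^{n/(n-1)}\widehat{vol}_{\mathcal{N}}(\gamma)$ and $\widehat{vol}_{\mathcal{N}}^{(n-1)/n}$ is homogeneous of degree one. For concavity I would argue on $\mathcal{N}$ itself, just as $vol^{1/n}$ is concave on $\mathcal{E}_{NS}$: if $\gamma\in\mathcal{N}=\mathcal{K}^\vee$, then $\langle\beta,\gamma\rangle\geq0$ for every $\beta\in\mathcal{K}_1$, the truncation $\max(\cdot,0)$ is inactive, and $\widehat{vol}_{\mathcal{N}}^{(n-1)/n}(\gamma)=\inf_{\beta\in\mathcal{K}_1}\langle\beta,\gamma\rangle$. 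As an infimum of linear functionals this is concave; equivalently it is superadditive, which with homogeneity yields concavity on $\mathcal{N}$.

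\emph{Positivity on the interior and reduction of (2),(3).} For the forward implication of (3) I would use the Khovanskii--Teissier inequality exactly as in Proposition \ref{prop vol hat coin kahler}. If $\gamma\in\mathcal{N}^\circ$, fix a K\"ahler class $\Omega$; then $\gamma-\varepsilon\Omega^{n-1}\in\mathcal{N}$ for small $\varepsilon>0$, so $\langle\beta,\gamma\rangle\geq\varepsilon\langle\beta,\Omega^{n-1}\rangle\geq\varepsilon\,vol(\beta)^{1/n}vol(\Omega)^{(n-1)/n}$ for every nef $\beta$. Restricting to $\beta\in\mathcal{K}_1$ gives a bound independent of $\beta$, whence $\widehat{vol}_{\mathcal{N}}(\gamma)>0$. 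For continuity (2) and the converse of (3) I would record that $\widehat{vol}_{\mathcal{N}}$ is an infimum of continuous functions, hence upper semicontinuous; on $\mathcal{N}^\circ$ it is continuous since a finite concave function is continuous on the interior of its domain, and on the open exterior $V\setminus\mathcal{N}$ it vanishes identically (any $\gamma\notin\mathcal{N}$ admits $\beta\in\mathcal{K}$ with $\langle\beta,\gamma\rangle<0$). Thus both the remaining claim of (3) and continuity at boundary points reduce, through upper semicontinuity and nonnegativity, to the single assertion that $\widehat{vol}_{\mathcal{N}}\equiv0$ on $\partial\mathcal{N}$.

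\emph{The boundary vanishing (the main obstacle).} It remains to show $\widehat{vol}_{\mathcal{N}}(\gamma)=0$ for $\gamma\in\partial\mathcal{N}\setminus\{0\}$. By the supporting hyperplane theorem at $\gamma$ and the duality $\mathcal{N}^\vee=\overline{\mathcal{K}}$, there is a nonzero nef class $\beta_0$ with $\langle\beta_0,\gamma\rangle=0$. Fixing a K\"ahler class $\Omega$, I would test with the K\"ahler classes $\beta_t:=\beta_0+t\Omega$ and their normalizations $\tilde\beta_t:=\beta_t/vol(\beta_t)^{1/n}\in\mathcal{K}_1$, for which $\langle\tilde\beta_t,\gamma\rangle=t\langle\Omega,\gamma\rangle/vol(\beta_0+t\Omega)^{1/n}$. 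The delicate point is precisely that $\beta_0$ need not be big, so $vol(\beta_0+t\Omega)\to0$ as $t\to0$ and the quotient is an indeterminate $0/0$. The resolution is a rate estimate: expanding $vol(\beta_0+t\Omega)=(\beta_0+t\Omega)^n=\sum_j\binom{n}{j}(\beta_0^{n-j}\cdot\Omega^j)\,t^j$, the lowest nonvanishing order $m\geq1$ obeys $m\leq n-1$, because $\beta_0\cdot\Omega^{n-1}=\langle\beta_0,\Omega^{n-1}\rangle>0$ (a nonzero element of $\overline{\mathcal{K}}=\mathcal{N}^\vee$ pairs strictly positively with the interior class $\Omega^{n-1}\in\mathcal{N}^\circ$). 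Hence $vol(\beta_0+t\Omega)\sim c\,t^m$ with $c>0$ and $m/n<1$, so $\langle\tilde\beta_t,\gamma\rangle\sim c^{-1/n}\langle\Omega,\gamma\rangle\,t^{1-m/n}\to0$ as $t\to0^+$, giving $\widehat{vol}_{\mathcal{N}}^{(n-1)/n}(\gamma)=0$. With this lemma, upper semicontinuity forces $\widehat{vol}_{\mathcal{N}}(\gamma_k)\to0=\widehat{vol}_{\mathcal{N}}(\gamma)$ whenever $\gamma_k\to\gamma\in\partial\mathcal{N}$, finishing continuity, and it shows $\widehat{vol}_{\mathcal{N}}(\gamma)>0$ forces $\gamma\in\mathcal{N}^\circ$, finishing (3). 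I expect the handling of a supporting class $\beta_0$ of zero volume to be the one genuinely nontrivial step.
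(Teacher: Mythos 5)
Your proposal is correct and follows essentially the same route as the paper: the crux in both is the boundary vanishing, obtained from a supporting nef class $\beta_0$ with $\langle\beta_0,\gamma\rangle=0$, perturbed to $\beta_0+t\Omega$, with the volume bounded below by the $t^{n-1}$-term $n\,t^{n-1}\beta_0\cdot\Omega^{n-1}>0$ so that the normalized pairing tends to $0$. Your packaging of continuity via upper semicontinuity of the infimum plus nonnegativity is a slightly cleaner (and arguably more complete) way to pass from that vanishing to continuity at boundary points than the paper's two-parameter estimate $\rho_{\delta,\varepsilon}$, but the substance is the same.
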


\begin{proof}
Property (1) just follows from the definition of $\widehat{vol}_{\mathcal{N}}$.
Now let us first prove property (3). Let $\gamma\in \mathcal{N}^{\circ}$ be an interior point, we want to show that $\widehat{vol}_{\mathcal{N}}(\gamma)>0$. $\gamma\in \mathcal{N}^{\circ}$ means that there exists some K\"ahler class $\omega$ such that $\gamma-\omega^{n-1} \in \mathcal{N}$, this implies
$\widehat{vol}_{\mathcal{N}}(\gamma)\geq \widehat{vol}_{\mathcal{N}}(\omega^{n-1})$.
We claim that $$\widehat{vol}_{\mathcal{N}}(\omega^{n-1})=vol(\omega),$$
which yields $\widehat{vol}_{\mathcal{N}}(\gamma)\geq vol(\omega)>0$.  The proof of this claim is the same with Proposition \ref{prop vol hat coin kahler}, so we omit it.
Conversely, we need to show that if $\widehat{vol}_{\mathcal{N}}(\gamma)>0$ then $\gamma\in \mathcal{N}^{\circ}$. Otherwise, $\gamma\in \partial\mathcal{N}\setminus \{[0]_{BC}\}$. And the cone duality $\mathcal{K}^\vee =\mathcal{N}$ implies there exists some $\theta\in \overline{\mathcal{K}}\setminus \{[0]_{BC}\}$ such that
$\langle\theta, \gamma\rangle=0$. Fix a K\"ahler class $\omega$. For any $\varepsilon>0$, we consider the K\"ahler class $\theta+\varepsilon\omega$ and the following intersection number $$\rho_\varepsilon :=\langle\frac{\theta+\varepsilon\omega}
{vol(\theta+\varepsilon\omega)^{1/n}}, \gamma\rangle.$$
Since $\theta\in \overline{\mathcal{K}}\setminus \{[0]_{BC}\}$, the class $\theta$ contains at least one non-zero positive current, then we have $\langle\theta, \omega^{n-1}\rangle>0$. And we have
$$vol(\theta+\varepsilon\omega)^{\frac{1}{n}}\geq n\langle\theta, \omega^{n-1}\rangle \varepsilon^{\frac{n-1}{n}}=\mathbf{O}(\varepsilon^{\frac{n-1}{n}}).$$
Using $\langle\theta, \gamma\rangle=0$, we get $\rho_\varepsilon \leq \mathbf{O}(\varepsilon^{1/n})$. Thus, $\widehat{vol}_{\mathcal{N}}(\gamma)=0$. In conclusion, we have proved that $\gamma \in \mathcal{N}^{\circ}$ if and only if $\widehat{vol}_{\mathcal{N}}(\gamma)>0$.

Next we consider the continuity of $\widehat{vol}_{\mathcal{N}}$, thus proving property (2). Since concave function defined in a convex set is continuous in the interior. In order to show the continuity of $\widehat{vol}_{\mathcal{N}}$, we need to verify
$$\underset{\varepsilon\rightarrow 0}{\lim }\widehat{vol}_{\mathcal{N}}(\gamma+\varepsilon \omega^{n-1})=0$$
for any $\gamma\in \partial\mathcal{N}\setminus \{[0]_{BC}\}$ and any K\"ahler class $\omega$. Indeed, for $\gamma\in \partial\mathcal{N}\setminus \{[0]_{BC}\}$, we will prove
\begin{align}
\label{eq vol_N boundary behaviour}
\widehat{vol}_{\mathcal{N}}(\gamma+\varepsilon \omega^{n-1})\leq \mathbf{O}(\varepsilon^{\frac{1}{n-1}}).
\end{align}
The arguments are similar with the estimation of $\rho_\varepsilon$, but with little modification. Once again, using the fact $\gamma\in \partial\mathcal{N}\setminus \{[0]_{BC}\}$, there exists some $\theta\in \overline{\mathcal{K}}\setminus \{[0]_{BC}\}$ such that
$\langle\theta, \gamma\rangle=0$. We consider the following intersection number
\begin{align}
\label{eq vol_N boundary behaviour rho}
\rho_{\delta,\varepsilon} :=\langle\frac{\theta+\delta\omega}
{vol(\theta+\delta\omega)^{1/n}}, \gamma+\varepsilon \omega^{n-1}\rangle
\end{align}
with $\delta$ positive to be determined. Using $\langle\theta, \gamma\rangle=0 $ and $\langle\theta, \omega^{n-1}\rangle>0$ again, it is easy to see that
\begin{align}
\label{eq vol_N boundary behaviour rho1}
\rho_{\delta,\varepsilon}
\leq \mathbf{O}(\delta^{\frac{1}{n}}+\delta^{\frac{1}{n}}\varepsilon+
\delta^{-\frac{n-1}{n}}
\varepsilon).
\end{align}
Take $\delta=\varepsilon$, we get $\rho_{\delta,\varepsilon}\leq\mathbf{O}(\varepsilon^{1/n})$, which implies
\begin{align}
\label{eq vol_N boundary behaviour1}
\widehat{vol}_{\mathcal{N}}(\gamma+\varepsilon \omega^{n-1})\leq \mathbf{O}(\varepsilon^{\frac{1}{n-1}}),
\end{align}
thus finishing the proof of continuity.
\end{proof}

We give a new interpretation of our volume functional as the infinimum of a family of geometric norms. We only work for $\widehat{vol}_{\mathcal{N}}$, and the arguments go through mutatis mutandis for the volume functional $\widehat{vol}_{\overline{NE}}$.

\begin{lemma}
\label{geometric norm}
(see also Corollary 2.8 of \cite{FL13}) Let $X$ be an $n$-dimensional compact K\"ahler manifold. Then any K\"ahler class $\alpha$ gives a norm $||\cdot ||_{\alpha}$ over $H^{n-1,n-1}_{BC}(X, \mathbb{R})$. Moreover, for $\gamma\in \mathcal{N}$, we have $||\gamma||_{\alpha}=\langle\alpha, \gamma\rangle$.
\end{lemma}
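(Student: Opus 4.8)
The plan is to build the norm directly from the cone $\mathcal{N}$ and the linear functional $\langle\alpha,\cdot\rangle$. Since $\mathcal{K}^{\vee}=\mathcal{N}$ and the K\"ahler cone $\mathcal{K}$ is an open, salient, full-dimensional convex cone, elementary finite-dimensional cone duality shows that $\mathcal{N}$ is a closed, salient, full-dimensional convex cone in $H^{n-1,n-1}_{BC}(X,\mathbb{R})$ (the dual of a full-dimensional cone is salient, and the dual of a salient cone is full-dimensional). Because $\alpha$ is a K\"ahler class it lies in the interior of $\mathcal{K}$, and the same duality yields strict positivity: $\langle\alpha,\gamma\rangle>0$ for every $\gamma\in\mathcal{N}\setminus\{0\}$. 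Given these two inputs I would define
$$\|\gamma\|_\alpha := \inf\{\langle\alpha,\gamma_1\rangle+\langle\alpha,\gamma_2\rangle : \gamma_1,\gamma_2\in\mathcal{N},\ \gamma=\gamma_1-\gamma_2\}$$
for $\gamma\in H^{n-1,n-1}_{BC}(X,\mathbb{R})$, and verify that this is the desired norm.

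First I would check finiteness and all norm axioms except positive definiteness. Finiteness is exactly the statement that every class is a difference of two elements of $\mathcal{N}$, which holds because $\mathcal{N}$ is full-dimensional and hence spans the vector space. Absolute homogeneity follows by scaling decompositions and, for the sign, by swapping the roles of $\gamma_1$ and $\gamma_2$; the triangle inequality follows by adding admissible decompositions of $\gamma$ and $\gamma'$ term by term. The restriction formula is then immediate: for $\gamma\in\mathcal{N}$ the trivial decomposition $(\gamma_1,\gamma_2)=(\gamma,0)$ gives $\|\gamma\|_\alpha\le\langle\alpha,\gamma\rangle$, while any admissible decomposition satisfies $\langle\alpha,\gamma_1\rangle+\langle\alpha,\gamma_2\rangle=\langle\alpha,\gamma\rangle+2\langle\alpha,\gamma_2\rangle\ge\langle\alpha,\gamma\rangle$ since $\langle\alpha,\gamma_2\rangle\ge0$; hence $\|\gamma\|_\alpha=\langle\alpha,\gamma\rangle$, which is the second assertion of the lemma.

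The main point, and the step I expect to require the most care, is positive definiteness: that $\|\gamma\|_\alpha=0$ forces $\gamma=0$. Here I would exploit that strict positivity of $\langle\alpha,\cdot\rangle$ on the closed salient cone $\mathcal{N}\setminus\{0\}$ makes the slice $\{\gamma'\in\mathcal{N}:\langle\alpha,\gamma'\rangle=1\}$ a compact base of $\mathcal{N}$. Concretely, if $\|\gamma\|_\alpha=0$ then there are decompositions $\gamma=\gamma_1^{(k)}-\gamma_2^{(k)}$ with $\langle\alpha,\gamma_1^{(k)}\rangle+\langle\alpha,\gamma_2^{(k)}\rangle\to0$; since both summands are nonnegative each tends to $0$, and compactness of the base forces $\gamma_i^{(k)}\to0$, whence $\gamma=\lim(\gamma_1^{(k)}-\gamma_2^{(k)})=0$. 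The only genuinely nontrivial ingredients are the two duality facts invoked above---that $\mathcal{K}^{\vee}=\mathcal{N}$ with $\mathcal{K}$ full and salient yields a closed, salient, full-dimensional $\mathcal{N}$, and that an interior class of $\mathcal{K}$ pairs strictly positively with $\mathcal{N}\setminus\{0\}$---both of which are standard once Demailly--Paun's characterization $\mathcal{K}^{\vee}=\mathcal{N}$ is available.
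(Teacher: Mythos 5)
Your proof is correct, but it takes a genuinely different route from the paper. The paper's construction is coordinate-based: it writes the fixed K\"ahler class as a sum $\alpha=\sum_{i=1}^{d}\alpha_i$ of $d=h^{1,1}$ K\"ahler classes forming a basis of $H^{1,1}_{BC}(X,\mathbb{R})$ (possible because the K\"ahler cone is open), and sets $\|\eta\|_{\alpha}=\sum_i |\langle \alpha_i,\eta\rangle|$, i.e.\ the $\ell^1$-norm of the coordinates of $\eta$ in the dual basis; the identity $\|\gamma\|_{\alpha}=\langle\alpha,\gamma\rangle$ on $\mathcal{N}$ then follows from $\langle\alpha_i,\gamma\rangle\ge 0$. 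Your construction is the intrinsic \emph{base norm} attached to the cone $\mathcal{N}$ and the strictly positive functional $\langle\alpha,\cdot\rangle$, defined via decompositions $\gamma=\gamma_1-\gamma_2$ with $\gamma_i\in\mathcal{N}$. What you buy is canonicity: your norm depends only on $\alpha$ and $\mathcal{N}$, whereas the paper's depends on the auxiliary choice of the $\alpha_i$ (the lemma only needs \emph{some} norm, so this is cosmetic but pleasant). What the paper's version buys is economy of hypotheses: it needs only the perfect pairing between $H^{1,1}_{BC}$ and $H^{n-1,n-1}_{BC}$ and openness of $\mathcal{K}$, while your argument additionally uses that $\mathcal{N}=\overline{\mathcal{K}}^{\vee}$ is closed, salient and full-dimensional and that the slice $\{\gamma'\in\mathcal{N}:\langle\alpha,\gamma'\rangle=1\}$ is a compact base. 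Those facts are all true and standard (salience of $\mathcal{N}$ because $\overline{\mathcal{K}}$ spans; full-dimensionality of $\mathcal{N}$ because the nef cone is salient, which itself rests on a Hodge-index type argument), but if you keep your version you should state the duality at the level of the closed cones $\overline{\mathcal{K}}^{\vee}=\mathcal{N}$ and flag explicitly where salience of the nef cone is used, since that is the one input that is not pure convex geometry. All the verifications you carry out (finiteness, homogeneity, triangle inequality, the computation $\langle\alpha,\gamma_1\rangle+\langle\alpha,\gamma_2\rangle=\langle\alpha,\gamma\rangle+2\langle\alpha,\gamma_2\rangle\ge\langle\alpha,\gamma\rangle$, and positive definiteness via the compact base) are sound, and both constructions restrict to $\langle\alpha,\cdot\rangle$ on $\mathcal{N}$, which is all that the subsequent Proposition on $\widehat{vol}_{\mathcal{N}}$ uses.
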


\begin{proof}
For any fixed K\"ahler class $\alpha$, there exist $d=h^{1,1}$ K\"ahler classes $\alpha_1,..., \alpha_d$ such that $\alpha_1,..., \alpha_d$ constitute a basis of the real vector space $H^{1,1}_{BC}(X, \mathbb{R})$, and $\alpha=\underset{1\leq i\leq d}{\sum} \alpha_i$. Then for any $\eta\in H^{n-1,n-1}_{BC}(X, \mathbb{R})$, we define $||\eta ||_{\alpha}$ as following:
$$||\eta ||_{\alpha}=\underset{1\leq i\leq d}{\sum}|\langle\alpha_i, \eta\rangle|.$$
It is clear that the above $||\cdot ||_{\alpha}$ is a norm, since it is just the sum of absolute values of the coordinates with respect to the basis $\alpha_1,..., \alpha_d$. Now, for $\gamma\in \mathcal{N}$, we have $\langle\alpha_i, \gamma\rangle\geq 0$. And this implies $$||\gamma||_{\alpha}=\underset{1\leq i\leq d}{\sum}\langle\alpha_i, \gamma\rangle=\langle\alpha, \gamma\rangle.$$
\end{proof}

Now by the definition of $\widehat{vol}_{\mathcal{N}}$, we have the following proposition.

\begin{proposition}
\label{prop geom norm}
Let $X$ be an $n$-dimensional compact K\"ahler manifold, then for $\gamma\in \mathcal{N}$ we have $$\widehat{vol}_{\mathcal{N}}^{\frac{n-1}{n}}(\gamma)=\underset{\alpha\in \mathcal{K}_1}{inf} ||\gamma||_{\alpha}.$$
\end{proposition}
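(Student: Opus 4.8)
The plan is to unwind the definition of $\widehat{vol}_{\mathcal{N}}$ and reduce the entire statement to Lemma \ref{geometric norm}. First I would observe that, since $\gamma \in \mathcal{N}$ and every $\alpha \in \mathcal{K}_1$ is in particular a K\"ahler class, the cone duality $\mathcal{K}^{\vee} = \mathcal{N}$ forces $\langle \alpha, \gamma \rangle \geq 0$. Hence the truncation $\max(\langle \alpha, \gamma\rangle, 0)$ appearing in the definition of $\widehat{vol}_{\mathcal{N}}$ is simply $\langle \alpha, \gamma\rangle$, so that
$$\widehat{vol}_{\mathcal{N}}(\gamma) = \underset{\alpha \in \mathcal{K}_1}{inf} \langle \alpha, \gamma\rangle^{\frac{n}{n-1}}.$$

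Next I would raise both sides to the power $\frac{n-1}{n}$ and push this exponent inside the infimum. Because $\gamma \in \mathcal{N}$ keeps every pairing $\langle \alpha, \gamma\rangle$ non-negative, and because $t \mapsto t^{\frac{n-1}{n}}$ is continuous and strictly increasing on $[0, \infty)$, this monotone transformation commutes with the infimum, yielding
$$\widehat{vol}_{\mathcal{N}}^{\frac{n-1}{n}}(\gamma) = \underset{\alpha \in \mathcal{K}_1}{inf} \langle \alpha, \gamma\rangle.$$

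Finally I would invoke Lemma \ref{geometric norm}, which asserts that for $\gamma \in \mathcal{N}$ and any K\"ahler class $\alpha$ one has $||\gamma||_{\alpha} = \langle \alpha, \gamma\rangle$. Substituting this identity into the previous display gives the claimed equality $\widehat{vol}_{\mathcal{N}}^{\frac{n-1}{n}}(\gamma) = \underset{\alpha \in \mathcal{K}_1}{inf} ||\gamma||_{\alpha}$. None of these steps is a serious obstacle: the substance of the proposition is carried entirely by Lemma \ref{geometric norm} together with the cone duality $\mathcal{K}^{\vee} = \mathcal{N}$, and the only point deserving an explicit word is the interchange of the infimum with the exponentiation, which is legitimate precisely because $\gamma \in \mathcal{N}$ guarantees the non-negativity of all the pairings while $\frac{n-1}{n} > 0$.
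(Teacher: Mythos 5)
Your proof is correct and is exactly the argument the paper intends: the paper offers no written proof beyond the remark that the proposition follows from the definition of $\widehat{vol}_{\mathcal{N}}$ together with Lemma \ref{geometric norm}, and your three steps (dropping the truncation via $\mathcal{K}^{\vee}=\mathcal{N}$, commuting the monotone power with the infimum, and substituting $||\gamma||_{\alpha}=\langle\alpha,\gamma\rangle$) simply make that explicit.
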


\subsection{Relation with mobility}

In this section, we focus on comparing $\widehat{vol}_{\overline{NE}}$ and Lehmann's mobility functional $mob$ for 1-cycles over smooth projective variety.
Firstly, let us recall the definition of mobility of numerical equivalence classes of curves. Let $\gamma$ be a 1-cycle class over $X$ of dimension $n$, the mobility of $\gamma$ is defined as following:
$$ mob(\gamma):= \underset{m\rightarrow \infty}{\limsup} \frac{mc(m\gamma)}{m^{\frac{n}{n-1}}/n!},$$
where $mc(m\gamma)$ is the mobility count of the 1-cycle class $m\gamma$ defined as the maximal non-negative integer $b$ such that any $b$ general points of $X$ are contained in a 1-cycle of class $m\gamma$.
From Theorem \ref{thm vol_N distinguish} and Theorem A in \cite{Leh13}, both functionals take positive values exactly in $\overline{NE}^{\circ}$ and are continuous over $\overline{NE}$. Moreover, both of them are homogeneous over $\overline{NE}$, it is natural to propose the following question.

\begin{conjecture}\label{conj mob volne}
Let $X$ be a smooth projective variety, then $mob=\widehat{vol}_{\overline{NE}}$, or at least there exist two positive constants
$c_1$ and $c_2$ depending only on the dimension of $X$ such that $$c_1\widehat{vol}_{\overline{NE}}\leq mob\leq c_2\widehat{vol}_{\overline{NE}}.$$
\end{conjecture}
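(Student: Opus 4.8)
The plan is to establish the two-sided estimate $c_1\,\widehat{vol}_{\overline{NE}} \leq mob \leq c_2\,\widehat{vol}_{\overline{NE}}$ by treating the two inequalities separately, since the two directions rely on genuinely different inputs and the exact equality $mob=\widehat{vol}_{\overline{NE}}$ appears to lie deeper.

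For the upper bound $mob \leq c_2\,\widehat{vol}_{\overline{NE}}$, I would start from Lehmann's estimates for the mobility count functional $mc$ in \cite{Leh13}. The geometric content is that a $1$-cycle passing through many general points of $X$ is forced to have large intersection with every ample divisor; quantitatively one expects an inequality of the shape $mc(\alpha) \leq C_n \max(\langle A,\alpha\rangle, 0)^{\frac{n}{n-1}}$ for each $A\in Amp_1$, where $C_n$ depends only on $n$ and the explicit value $2^{4n+1}$ in Theorem \ref{theorem compare mob and vol_N} records the constant coming from Lehmann's counting. Applying this to $\alpha=m\gamma$, using $\langle A, m\gamma\rangle = m\langle A,\gamma\rangle$, dividing by $m^{\frac{n}{n-1}}/n!$ and passing to the $\limsup$ in $m$ gives $mob(\gamma)\leq n!\,C_n \max(\langle A,\gamma\rangle,0)^{\frac{n}{n-1}}$. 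As this holds for every $A\in Amp_1$, taking the infimum over $Amp_1$ produces $mob(\gamma)\leq n!\,C_n\,\widehat{vol}_{\overline{NE}}(\gamma)$, which is exactly the upper bound with $c_2 = n!\,C_n$.

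For the lower bound $c_1\,\widehat{vol}_{\overline{NE}}\leq mob$, the problem is reversed: for $m$ large one must \emph{construct} $1$-cycles in the class $m\gamma$ passing through roughly as many general points as $\widehat{vol}_{\overline{NE}}(\gamma)$ predicts. The natural strategy is to realize such cycles as complete intersections of members of suitable linear systems, or via a Zariski-type decomposition adapted to curve classes, so that the resulting interpolation count is bounded below by an intersection number $\langle A,\gamma\rangle$ for an ample class $A$ nearly attaining the infimum defining $\widehat{vol}_{\overline{NE}}(\gamma)$. This is the direction deferred to the joint work \cite{LX15}; combined with the upper bound it yields the full two-sided estimate.

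The hard part will be this lower bound, and beyond it the conjectured equality. The upper bound is essentially bookkeeping layered on Lehmann's counting estimates, whereas the lower bound demands an effective geometric construction of cycles through general points that is sharp enough to be matched against the dual description of $\widehat{vol}_{\overline{NE}}$ as an infimum over $Amp_1$. Proving that the constructed cycles are asymptotically optimal --- so that $c_1$ can be driven up to the value forcing $mob=\widehat{vol}_{\overline{NE}}$ --- is the genuine obstacle, and it is precisely here that the gap between the two-sided bound and the exact equality resides.
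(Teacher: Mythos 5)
You should first be clear that the statement you were asked to prove is labelled a \emph{conjecture} in the paper, and the paper does not prove it: the author explicitly defers the existence of the lower constant $c_1$ to the joint work \cite{LX15}, and only establishes the partial results of Theorem \ref{thm compare mob and vol_NE}. Your proposal has the same status --- it proves the upper bound and defers the lower bound --- so there is no pretense of a complete proof on either side, and your assessment of where the difficulty lies (constructing cycles through general points, matching the infimum over $Amp_1$) agrees with the paper's own discussion.

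For the upper bound, your argument is essentially the paper's proof. Two details you gloss over are worth flagging. First, Lehmann's Theorem 6.24 estimate reads $mc(\gamma)\leq 2^{4n+1}s^{\frac{n}{n-1}}vol(A)$ for $A$ \emph{very ample} and $s\geq 1$ with $\langle A,\gamma\rangle\leq s\,vol(A)$; to convert this into the normalized form $2^{4n+1}\langle \alpha/vol(\alpha)^{1/n},\gamma\rangle^{\frac{n}{n-1}}$ one must (i) reduce to integral classes $\gamma$ by homogeneity and continuity of both functionals, (ii) replace an arbitrary $\mathbb{Q}$-ample $\alpha$ by a very ample multiple $m_\alpha\alpha$, and (iii) ensure $s\geq 1$, which the paper arranges by applying the estimate to $k\gamma$ for $k$ large before taking the $\limsup$ defining $mob$; the scaling then cancels exactly because $s^{\frac{n}{n-1}}vol(A)=\langle A/vol(A)^{1/n},\gamma\rangle^{\frac{n}{n-1}}$. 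Second, the passage from $\mathbb{Q}$-ample classes to the full infimum over $Amp_1$ uses density, as you implicitly assume. On the lower bound, note that the paper does prove something weaker than what you defer: for $\gamma\in\partial\overline{NE}$ it obtains $mob(\gamma+\varepsilon A^{n-1})\geq c(A,\gamma)\,\varepsilon\,\widehat{vol}_{\overline{NE}}(\gamma+\varepsilon A^{n-1})$ by the cheap route $mob(\gamma+\varepsilon A^{n-1})\geq mob(\varepsilon A^{n-1})=\mathbf{O}(\varepsilon^{\frac{n}{n-1}})$ combined with the decay estimate $\widehat{vol}_{\overline{NE}}(\gamma+\varepsilon A^{n-1})\leq\mathbf{O}(\varepsilon^{\frac{1}{n-1}})$ from the proof of Theorem \ref{thm vol_N distinguish}; this loses a factor of $\varepsilon$ and so does not yield a uniform $c_1$, which is precisely the gap you correctly identify as the genuine obstacle.
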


We observe that the constant $c_2$ is provided by the upper bound estimation of mobility count. For any fixed ample divisor $A$ and boundary point $\gamma\in \partial \overline{NE}$, it is clear that if we can find a positive constant $c(A, \gamma)$ such that
$$
\underset{\varepsilon\rightarrow 0}{\liminf}\frac{mob(\gamma+ \varepsilon A^{n-1})}{\widehat{vol}_{\overline{NE}}(\gamma+ \varepsilon A^{n-1})} \geq c(A, \gamma),
$$
then we can obtain the desired uniform constant $c_1$. In this direction, we can get a weaker asymptotic behaviour as $\varepsilon$ tends to zero.

\begin{theorem}
\label{thm compare mob and vol_NE}
Let $X$ be an $n$-dimensional smooth projective variety. Then for any $\gamma\in \overline{NE}$, we have $$mob(\gamma)\leq n! 2^{4n+1} \widehat{vol}_{\overline{NE}}(\gamma).$$
And for any fixed ample divisor $A$ and boundary point $\gamma\in \partial \overline{NE}$, there is a positive constant $c(A, \gamma)$ such that
$$mob(\gamma+ \varepsilon A^{n-1})\geq c(A, \gamma)\varepsilon \widehat{vol}_{\overline{NE}}(\gamma+ \varepsilon A^{n-1}).$$
In particular, we have $$\underset{\varepsilon\rightarrow 0}{\liminf}\frac{mob(\gamma+ \varepsilon A^{n-1})}{\varepsilon \widehat{vol}_{\overline{NE}}(\gamma+ \varepsilon A^{n-1})} \geq c(A, \gamma).$$
\end{theorem}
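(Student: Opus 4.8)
The plan is to treat the two inequalities separately, since they draw on quite different inputs. The global upper bound $mob(\gamma)\leq n!\,2^{4n+1}\widehat{vol}_{\overline{NE}}(\gamma)$ I would deduce directly from Lehmann's upper estimate on the mobility count in \cite{Leh13}. That estimate supplies, for every curve class $\alpha$ and every ample class $\beta$, a scale-invariant bound of the shape
\[
mc(\alpha)\leq 2^{4n+1}\,\frac{\langle\beta,\alpha\rangle^{\frac{n}{n-1}}}{(\beta^n)^{\frac{1}{n-1}}},
\]
which for $\beta\in Amp_1$ (so that $\beta^n=vol(\beta)=1$) reads $mc(\alpha)\leq 2^{4n+1}\langle\beta,\alpha\rangle^{\frac{n}{n-1}}$. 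Applying this to $\alpha=m\gamma$, dividing by $m^{\frac{n}{n-1}}/n!$, and passing to the $\limsup$ gives $mob(\gamma)\leq n!\,2^{4n+1}\langle\beta,\gamma\rangle^{\frac{n}{n-1}}$ for each $\beta\in Amp_1$. Since $\gamma\in\overline{NE}$ forces $\langle\beta,\gamma\rangle\geq 0$, so that $\max(\langle\beta,\gamma\rangle,0)=\langle\beta,\gamma\rangle$, taking the infimum over $\beta\in Amp_1$ yields exactly $mob(\gamma)\leq n!\,2^{4n+1}\widehat{vol}_{\overline{NE}}(\gamma)$.

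For the asymptotic lower bound I would play the homogeneity of both functionals against the boundary decay of $\widehat{vol}_{\overline{NE}}$. First, $mob$ is homogeneous of degree $\frac{n}{n-1}$ and monotone along $\overline{NE}$ (monotonicity of the mobility count under adjoining a fixed effective cycle, extended to the closed cone by the continuity of $mob$ from Theorem A of \cite{Leh13}). Hence, because $(\gamma+\varepsilon A^{n-1})-\varepsilon A^{n-1}=\gamma\in\overline{NE}$,
\[
mob(\gamma+\varepsilon A^{n-1})\geq mob(\varepsilon A^{n-1})=\varepsilon^{\frac{n}{n-1}}\,mob(A^{n-1}),
\]
where $mob(A^{n-1})>0$ since $A^{n-1}\in\overline{NE}^{\circ}$ (it pairs strictly positively with $\overline{Amp}\setminus\{0\}$) together with Lehmann's positivity on the interior.

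Second, running the boundary-decay estimate of Theorem \ref{thm vol_N distinguish} in the $\overline{NE}$ setting (the analogue of (\ref{eq vol_N boundary behaviour1}), obtained from the duality $Amp^\vee=\overline{NE}$ by choosing $\theta\in\overline{Amp}\setminus\{0\}$ with $\langle\theta,\gamma\rangle=0$) produces a constant $C=C(A,\gamma)>0$ with $\widehat{vol}_{\overline{NE}}(\gamma+\varepsilon A^{n-1})\leq C\varepsilon^{\frac{1}{n-1}}$. Combining the two estimates gives
\[
mob(\gamma+\varepsilon A^{n-1})\geq \varepsilon^{\frac{n}{n-1}}\,mob(A^{n-1})\geq \frac{mob(A^{n-1})}{C}\,\varepsilon\,\widehat{vol}_{\overline{NE}}(\gamma+\varepsilon A^{n-1}),
\]
so that $c(A,\gamma):=mob(A^{n-1})/C$ works, and the $\liminf$ statement follows at once.

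The main obstacle is isolating and correctly normalizing Lehmann's mobility-count estimate so that the constant $2^{4n+1}$ and the exponent $\frac{n}{n-1}$ line up with the definition of $\widehat{vol}_{\overline{NE}}$; this is the only genuinely external input. A secondary point requiring care is the monotonicity of $mob$ along the \emph{closed} cone $\overline{NE}$: it is transparent for effective integral classes via the cycle construction, but for a boundary class $\gamma$ the multiple $m\gamma$ need not be effective, so I would justify the passage to $\overline{NE}$ by density of effective classes together with the continuity of $mob$. Everything else is homogeneity bookkeeping and the already-available boundary decay of the volume functional.
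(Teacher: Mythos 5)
Your proposal is correct and follows essentially the same route as the paper: the upper bound comes from normalizing Lehmann's mobility-count estimate (Theorem 6.24 of \cite{Leh13}) and passing from very ample $\mathbb{Q}$-classes to all of $Amp_1$ by homogeneity and density, and the lower bound combines the monotonicity and homogeneity of $mob$ (giving $mob(\gamma+\varepsilon A^{n-1})\geq \varepsilon^{\frac{n}{n-1}}mob(A^{n-1})$) with the boundary decay $\widehat{vol}_{\overline{NE}}(\gamma+\varepsilon A^{n-1})\leq C\varepsilon^{\frac{1}{n-1}}$ exactly as in the paper's proof. The only differences are bookkeeping (you normalize the ample class first rather than at the end), and you even flag the same subtle points the paper handles, namely that real $s\geq 1$ suffices in Lehmann's bound and that the reduction to integral classes is needed.
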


\begin{proof}
The upper bound $c_2$ relies on the estimations of mobility counts. By homogeneity and continuity, we only need to consider the case when $\gamma$ is given by a 1-cycle with $\mathbb{Z}$-coefficients. We need Lehmann's upper bound estimation (see Theorem 6.24 of \cite{Leh13}):  let $A$ be a very ample divisor and let $s$ be a positive integer such that $\langle A, \gamma\rangle\leq s vol(A)$, then
$$mc(\gamma)\leq 2^{4n+1} s^{\frac{n}{n-1}}vol(A).$$
Indeed, by inspection of the proof of Theorem 6.24 of \cite{Leh13}, any real number $s\geq 1$ is sufficient for the above estimation of $mc(\gamma)$. Fix a $\mathbb{Q}$-ample divisor $\alpha$, then there exists a positive integer $m_\alpha$ such that $m_\alpha \alpha$ is very ample. And for this very ample divisor $m_\alpha \alpha$, there exists a positive integer $k_\alpha$ such that
\begin{align}
\frac{\langle m_\alpha \alpha,k \gamma\rangle}{vol(m_\alpha \alpha)}
\geq 1
\end{align}
for all positive integer $k\geq k_\alpha$
Applying Lehmann's mobility count estimation to $k\gamma$ when $A=m_\alpha \alpha$ and $s=\frac{\langle m_\alpha \alpha,k \gamma\rangle}{vol(m_\alpha \alpha)}$, we get
\begin{align}
mc(k \gamma)&\leq 2^{4n+1}\big(\frac{\langle m_\alpha \alpha,k \gamma\rangle}{vol(m_\alpha \alpha)}\big)^{\frac{n}{n-1}}
vol(m_\alpha \alpha)\\
&= 2^{4n+1}\langle\frac{\alpha}{vol(\alpha)^{1/n}}, k\gamma\rangle^{\frac{n}{n-1}}.
\end{align}
This yields the upper bound of $mob(\gamma)$:
\begin{align}
mob(\gamma)=\underset{k\rightarrow \infty}{\limsup}
\frac{mc(k \gamma)}{k^{n/n-1}/n!}\leq n!2^{4n+1}\langle\frac{\alpha}{vol(\alpha)^{1/n}}, \gamma\rangle^{\frac{n}{n-1}}.
\end{align}
Since any point of the ample cone can be approximated by $\mathbb{Q}$-ample divisors, we obtain our desired
\begin{align}
mob(\gamma)\leq n! 2^{4n+1} \widehat{vol}_{\overline{NE}}(\gamma).
\end{align}

Now let us consider the lower bound.
In the proof of Theorem \ref{thm vol_N distinguish} (see (\ref{eq vol_N boundary behaviour})-(\ref{eq vol_N boundary behaviour1})), we obtain the estimation of $\widehat{vol}_{\mathcal{N}}(\gamma+ \varepsilon \omega^{n-1})$. Using similar argument, we can get the same estimation of $\widehat{vol}_{\overline{NE}}(\gamma+ \varepsilon A^{n-1})$ with $\gamma\in \partial \overline{NE}$ and $A$ ample, that is,
\begin{align}
\label{eq vol_NE boundary behaviour}
\widehat{vol}_{\overline{NE}}(\gamma+ \varepsilon A^{n-1})\leq \mathbf{O}(\varepsilon^{\frac{1}{n-1}}).
\end{align}
By the basic property of mobility functional (see Lemma 6.17 of \cite{Leh13}), we have
\begin{align}
\label{eq mob boundary behaviour}
mob(\gamma+ \varepsilon A^{n-1})\geq mob(\varepsilon A^{n-1})=\mathbf{O}(\varepsilon^{\frac{n}{n-1}}).
\end{align}
Thus we get
$$mob(\gamma+ \varepsilon A^{n-1})\geq
c(A, \gamma)\varepsilon \widehat{vol}_{\overline{NE}}(\gamma+ \varepsilon A^{n-1})$$
for some positive constant $c(A, \gamma)$.
In particular, we have $$\underset{\varepsilon\rightarrow 0}{\liminf}\frac{mob(\gamma+ \varepsilon A^{n-1})}{\varepsilon \widehat{vol}_{\overline{NE}}(\gamma+ \varepsilon A^{n-1})} \geq c(A, \gamma).$$
\end{proof}

\begin{remark}
In order to obtain such an uniform lower bound $c_1$, what we expect is a better estimation of $mob(\gamma+ \varepsilon A^{n-1})$, that is,
$$mob(\gamma+ \varepsilon A^{n-1})\geq \mathbf{O}(\varepsilon^{\frac{1}{n-1}}),$$
as $\varepsilon$ tends to zero.
To obtain this, we need deeper understanding of $mob$.
\end{remark}

\begin{remark}
Just from its definition, the mobility functional $mob$ seems very hard to compute. For example, even in the case of complete intersection of ample divisor (see Question 7.1 of \cite{Leh13}), we do not know how to calculate its mobility. However, using our
volume functional, we have seen that $\widehat{vol}_{\overline{NE}}(A^{n-1})=vol(A)$ for any ample divisor $A$. For the concavity of $mob$, it is conjectured (see Conjecture 6.20 of \cite{Leh13}) that
$$mob(\gamma_1 +\gamma_2)^{\frac{n-1}{n}}\geq mob(\gamma_1 )^{\frac{n-1}{n}}+mob(\gamma_2)^{\frac{n-1}{n}}.$$
For our $\widehat{vol}_{\overline{NE}}$, concavity just follows from its definition (see Theorem \ref{thm vol_N distinguish}). Thus concavity of $mob$ will follow if we can prove $mob=\widehat{vol}_{\overline{NE}}$.
\end{remark}

\subsection{Towards Fujita approximation for 1-cycles}

In the work of \cite{FL13}, Fulger and Lehmann proved the existence of Zariski decomposition for big cycles with respect to mobility functional. Moreover, they also proved a Fujita type approximation for numerical class of curves. Our goal is to give such a Fujita type approximation for Bott-Chern classes of $d$-closed positive $(n-1,n-1)$-currents over compact K\"ahler manifolds with respect to our volume functional $\widehat{vol}_{\mathcal{N}}$, thus also give a Fujita type approximation for numerical class of curves over projective variety with respect to $\widehat{vol}_{\overline{NE}}$. Analogue to Fujita approximation for K\"ahler currents (see inequality (\ref{eq approximation zariski ineq})), one may conjecture the following:
\\

\emph{Let $X$ be an $n$-dimensional compact K\"ahler manifold and let $\gamma\in \mathcal{N}^\circ$. Then for any $\varepsilon>0$, there exists a proper modification $\mu: \widetilde{X}\rightarrow X$ with $\widetilde{X}$ K\"ahler such that $\mu^* \gamma= \beta_\varepsilon+[C_\varepsilon]$ and $\widehat{vol}_{\mathcal{N}}(\gamma)-\varepsilon\leq \widehat{vol}_{\mathcal{N}}(\beta_\varepsilon)\leq \widehat{vol}_{\mathcal{N}}(\gamma)$, where $\beta_\varepsilon$ is an interior point of movable cone ${\widetilde{\mathcal{M}}}$ (or balanced cone ${\widetilde{\mathcal{B}}}$) and $C_\varepsilon$ is an effective curve.}
\\

Indeed, if we have the decomposition $\mu^* \gamma= \beta_\varepsilon+[C_\varepsilon]$, then we have $\gamma-\mu_* \beta_\varepsilon \in \mathcal{N}$. This implies $\widehat{vol}_{\mathcal{N}}(\mu_* \beta_\varepsilon)\leq \widehat{vol}_{\mathcal{N}}(\gamma)$. Now similar to Proposition \ref{prop modification vol_ G B}, it is easy to see $\widehat{vol}_{\mathcal{N}}( \beta_\varepsilon)\leq\widehat{vol}_{\mathcal{N}}(\mu_* \beta_\varepsilon)$. Thus the above expected decomposition automatically implies $\widehat{vol}_{\mathcal{N}}( \beta_\varepsilon)\leq \widehat{vol}_{\mathcal{N}}(\gamma)$. Unfortunately, the pull-back $\mu^* \gamma$ need not to be a pseudo-effective class in general. Note that $\mu^* \gamma$ is pseudo-effective over $\widetilde{X}$ if and only if $\langle \mu^* \gamma, \tilde{\alpha}\rangle \geq 0$ for any K\"ahler class $\tilde{\alpha}$, which is equivalent to $\langle \gamma, \mu_* \tilde{\alpha}\rangle \geq 0$. In general, $\mu_* \tilde{\alpha}$ is not a nef class on $X$. By the cone duality $\overline{\mathcal{K}}^ \vee = \mathcal{N}$, we have $\langle \gamma, \mu_* \tilde{\alpha}\rangle <0$ if $\mu_* \tilde{\alpha}\notin \overline{\mathcal{K}}$. Anyhow, if $\gamma\in \overline{\mathcal{M}}$ is movable, then its pull-back $\mu^* \gamma$ is also movable (thus pseudo-effective). For movable classes, it is possible to obtain the conjectured decomposition
$\mu^* \gamma= \beta_\varepsilon+[C_\varepsilon]$ with desired properties.

To prove Fujita approximation for $\gamma$ with respect to our volume functional, the first step of our strategy is to decompose $\gamma$ over the underlying manifold $X$ into some ``good" part with its volume near the volume of $\gamma$. We also call it the positive part, and call the difference the negative part. Here ``good" means we can find a positive current in the class with less singularities, then we may get a movable or balanced class from its pull-back on some K\"ahler manifold $\widetilde{X}$ such that its volume is as near $\widehat{vol}_{\mathcal{N}}(\gamma)$ as possible (this will be developed in our subsequent work \cite{LX15}). Besides the desired positive part, we also want to obtain some effective curve from such a decomposition. For the Zariski decomposition of Fulger and Lehmann, in general the negative part is not the class of an effective curve (see Example 5.18 of \cite{FL13}). In the work \cite{Bou04}, Boucksom defined a beautiful divisorial Zariski decomposition for any pseudo-effective $(1,1)$-class over compact complex manifolds. Boucksom's definition is totally analytic which depends on Siu decomposition of positive currents (see \cite{Siu74}). And it can be seen as a cohomology version of Siu decomposition. As Siu decomposition holds for $d$-closed positive currents of any bidegree, the method of Boucksom provides a possible Zariski decomposition for pseudo-effective $(n-1,n-1)$-classes. However, unlike the $(1,1)$-classes, we do not have an analogue of Demailly's regularization theorem (see \cite{Dem92}) for $d$-closed positive $(n-1,n-1)$-currents.
We know little about the singularities of such currents. Thus we can not expect too much about such decompositions.
Following Boucksom's method of divisorial Zariski decomposition, we give such a decomposition for pseudo-effective $(n-1,n-1)$-classes. It shares many nice properties with divisorial Zariski decomposition.

Firstly, we give the definition of minimal multiplicity.

\begin{definition}
\label{def minimal multi}
Let $X$ be an $n$-dimensional compact K\"ahler manifold with a K\"ahler metric $\omega$, and let $\gamma\in \mathcal{N}$ be a pseudo-effective $(n-1,n-1)$-class.\\
(1) The minimal multiplicity of $\gamma$ at the point $x$ is defined to be
$$
\nu(\gamma,x):=\underset{\varepsilon>0}{sup} \  \underset{T_\varepsilon}{inf}\ \nu(T_\varepsilon, x),
$$
where $T_\varepsilon \in \gamma$ ranges among all currents such that $T_\varepsilon\geq -\varepsilon\omega^{n-1}$ (we also denote this set by  $\gamma[-\varepsilon\omega^{n-1}]$) and $\nu(T_\varepsilon, x)$ is the Lelong number of $T_\varepsilon$ at $x$.\\
(2) For any irreducible curve $C$, the minimal multiplicity of $\gamma$ along $C$ is defined to be
$$\nu(\gamma,C):=\underset{x\in C}{inf}\ {\nu(\gamma, x)}.$$
\end{definition}

\begin{remark}
\label{rmk multi}
It is easy to see that $\nu(\gamma,x)$ is finite. And $\nu(\gamma,C)=\nu(\gamma,x)$ for a generic point $x\in C$, here generic means outside at most countable union of analytic subsets.
\end{remark}

\begin{definition}
\label{def zariski }
Let $\gamma\in \mathcal{N}$ be a pseudo-effective $(n-1,n-1)$-class, the negative part $N(\gamma)$ of $\gamma$ is defined to be
$N(\gamma):=\sum \nu(\gamma, C)[C]$,
where $C$ ranges among all irreducible curves on $X$. And the positive part $Z(\gamma)$ of $\gamma$ is defined to be
$
Z(\gamma):=\gamma-\{N(\gamma)\}
$. And we call $\gamma=Z(\gamma)+\{N(\gamma)\}$ the Zariski decomposition of $\gamma$.
\end{definition}

Intuitively, the positive part $Z(\gamma)$ should share almost all positivity of $\gamma$ and the negative part should have very little positivity. Indeed, in the divisorial Zariski decomposition case, using his volume characterization by Monge-Amp\`{e}re mass, Boucksom
showed that $vol(\alpha)=vol(Z(\alpha))$ for any $\alpha\in \mathcal{E}$ over compact K\"ahler manifolds. In our setting, one way to compare the positivity of $Z(\gamma)$ and $\gamma$ is to compare their respective volumes $\widehat{vol}_{\mathcal{N}}(Z(\gamma))$ and $\widehat{vol}_{\mathcal{N}}(\gamma)$. For the negative part $\{N(\gamma)\}$, like the one in divisorial Zariski decomposition, we find $N(\gamma)$ is an effective curve which is very rigidly embedded in $X$ if we assume $\gamma$ is an interior point. This is an advantage compared with the other decompositions (e.g. the decompositions in \cite{FL13} and \cite{LX15}).

\begin{remark}
From Theorem \ref{thm vol_N distinguish}, it is clear that $\widehat{vol}_{\mathcal{N}}(\gamma)
=\widehat{vol}_{\mathcal{N}}(Z(\gamma))=0$ if $\gamma\in \partial\mathcal{N}$. And by the concavity of $\widehat{vol}_{\mathcal{N}}$, the equality $\widehat{vol}_{\mathcal{N}}(\gamma)
=\widehat{vol}_{\mathcal{N}}(Z(\gamma))$
will imply $\widehat{vol}_{\mathcal{N}}(\{N(\gamma)\})=0$.
\end{remark}

\begin{theorem}
\label{thm zariski}
Let $X$ be an $n$-dimensional compact K\"ahler manifold and let $\gamma\in \mathcal{N}^\circ$ be an interior point. Let $\gamma=Z(\gamma)+\{N(\gamma)\}$ be the Zariski decomposition in the sense of Boucksom, then $N(\gamma)$ is an effective curve and it is the unique positive current contained in the negative part $\{N(\gamma)\}$.
As a consequence, this implies $\widehat{vol}_{\mathcal{N}}(\{N(\gamma)\})=0$. Moreover, we have $\widehat{vol}_{\mathcal{N}}(\gamma)
=\widehat{vol}_{\mathcal{N}}(Z(\gamma))$.
\end{theorem}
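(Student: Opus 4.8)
The plan is to adapt Boucksom's analysis of the divisorial Zariski decomposition, replacing the role played by Demailly regularization of $(1,1)$-currents with the defining property of the minimal multiplicities $\nu(\gamma,C)$. First I would establish that $N(\gamma)=\sum_C \nu(\gamma,C)[C]$ is a genuine effective curve, i.e. that only finitely many irreducible curves $C$ satisfy $\nu(\gamma,C)>0$. Since $\gamma$ is an interior point, we may write $\gamma=\omega^{n-1}+\gamma'$ with $\omega$ Kähler and $\gamma'\in\mathcal{N}$; this gives a uniform bound on the masses of the nearly-positive representatives $T_\varepsilon\in\gamma[-\varepsilon\omega^{n-1}]$ against a fixed Kähler class, hence via the standard Lelong-number/mass estimate a uniform bound on $\sum_C \nu(\gamma,C)\,\langle\omega,[C]\rangle$. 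As the curves through a generic point contribute positively, only finitely many $C$ can carry positive multiplicity, so $N(\gamma)$ is a well-defined effective $1$-cycle and $\{N(\gamma)\}\in\mathcal{N}$.

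Next I would prove the uniqueness statement: $N(\gamma)$ is the \emph{only} closed positive $(n-1,n-1)$-current in its Bott-Chern class $\{N(\gamma)\}$. The key is that for an interior class $\gamma$ the positive part $Z(\gamma)=\gamma-\{N(\gamma)\}$ should be ``nef in codimension one,'' meaning $\nu(Z(\gamma),C)=0$ for every irreducible curve $C$; this follows formally from the construction of $\nu$, exactly as in Boucksom's divisorial case, since subtracting the full minimal multiplicity along each $C$ exhausts the forced Lelong numbers. Then any positive current $S\in\{N(\gamma)\}$ would yield, after adding a representative of $Z(\gamma)$, a positive current in $\gamma$ whose Lelong numbers along the exceptional curves are at least $\nu(\gamma,C)$; minimality of $N(\gamma)$ together with the rigidity coming from $Z(\gamma)$ having zero minimal multiplicities forces $S=N(\gamma)$ as a current. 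This rigidity — pinning down the current and not merely its cohomology class — is where I expect the main difficulty, precisely because, as the text emphasizes, we lack a regularization theorem for positive $(n-1,n-1)$-currents and so cannot freely smooth representatives; I would instead lean on the Siu decomposition and the generic-point characterization of $\nu(\gamma,C)$ from Remark \ref{rmk multi}.

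Granting uniqueness, the vanishing $\widehat{vol}_{\mathcal{N}}(\{N(\gamma)\})=0$ would follow from Theorem \ref{thm vol_N distinguish}(3): if $\{N(\gamma)\}$ were an interior point of $\mathcal{N}$, its class would contain a whole open set of positive currents, contradicting the uniqueness just established; hence $\{N(\gamma)\}\in\partial\mathcal{N}$ and its volume is zero. Finally, for the volume equality $\widehat{vol}_{\mathcal{N}}(\gamma)=\widehat{vol}_{\mathcal{N}}(Z(\gamma))$, one inequality is immediate from concavity and monotonicity, since $\gamma=Z(\gamma)+\{N(\gamma)\}$ with $\{N(\gamma)\}\in\mathcal{N}$ gives $\widehat{vol}_{\mathcal{N}}(\gamma)\geq\widehat{vol}_{\mathcal{N}}(Z(\gamma))$. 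For the reverse, I would test against Kähler classes $\beta\in\mathcal{K}_1$: because $Z(\gamma)$ carries no positive mass along any curve while $N(\gamma)$ is supported on finitely many curves, the pairing $\langle\beta,\{N(\gamma)\}\rangle$ can be made asymptotically negligible relative to $\langle\beta,Z(\gamma)\rangle$ along a minimizing sequence, so the defining infima for $\gamma$ and for $Z(\gamma)$ agree. The rigidity of $N(\gamma)$ established above is exactly what guarantees that this negligibility is uniform, closing the argument.
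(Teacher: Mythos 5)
Your overall strategy (adapt Boucksom's arguments, let the interiority of $\gamma$ stand in for Demailly regularization, use Siu decomposition) matches the paper's, but two of your steps have genuine gaps. First, finiteness of $N(\gamma)$: a uniform bound on $\sum_C\nu(\gamma,C)\langle\omega,[C]\rangle$ only shows that the sum converges, so that $N(\gamma)$ is a well-defined positive current; it does not show that only finitely many $\nu(\gamma,C)$ are nonzero, since a convergent series of positive terms can have countably many nonzero terms and the multiplicities $\nu(\gamma,C)$ can be arbitrarily small. The paper instead proves that the classes of the curves with $\nu(\gamma,C)>0$ are linearly independent in $N_1(X,\mathbb{R})$, so there are at most $\rho=\dim_{\mathbb{R}}N_1(X,\mathbb{R})$ of them; that argument in turn rests on the identity $\{N(\gamma)\}=\{N(\{N(\gamma)\})\}$ (Lemma \ref{lemma neg eq}), which is also the ingredient missing from your uniqueness step: since $\{N(\gamma)\}$ is a boundary class, you cannot apply to it the characterization $\nu(\cdot,C)=\underset{0\leq T}{\inf}\,\nu(T,C)$, which is established only for interior classes (Lemma \ref{lemma multiplicity}); the paper gets around this by the purely formal computation $Z(\{N(\gamma)\})=0$ and then concludes via Siu decomposition that any positive $T\in\{N(\gamma)\}$ satisfies $T\geq N(\gamma)$, hence equals it.

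Second, and more seriously, your argument for the inequality $\widehat{vol}_{\mathcal{N}}(\gamma)\leq\widehat{vol}_{\mathcal{N}}(Z(\gamma))$ does not work as stated: for each fixed $\beta\in\mathcal{K}_1$ the pairing $\langle\beta,\{N(\gamma)\}\rangle$ is a fixed positive number, and there is no reason a minimizing sequence of K\"ahler classes for $Z(\gamma)$ (or for $\gamma$) should degenerate in a way that makes it negligible compared with $\langle\beta,Z(\gamma)\rangle$; the rigidity of $N(\gamma)$ as a current says nothing about how the infimum over $\mathcal{K}_1$ is approached. The paper proves this inequality by an entirely different mechanism: it invokes the decomposition $\gamma=B_\gamma^{n-1}+\zeta_\gamma$ from \cite{LX15}, with $B_\gamma$ nef and big, $\zeta_\gamma\in\partial\mathcal{N}$ and $\widehat{vol}_{\mathcal{N}}(B_\gamma^{n-1})=\widehat{vol}_{\mathcal{N}}(\gamma)$, and then shows by a Lelong-number comparison (approximating $B_\gamma^{n-1}$ by smooth forms bounded below by $-\varepsilon\omega^{n-1}$) that $\nu(\gamma,x)\leq\nu(T_\gamma,x)$ for a positive current $T_\gamma\in\zeta_\gamma$, whence $Z(\gamma)-B_\gamma^{n-1}\in\mathcal{N}$ and $\widehat{vol}_{\mathcal{N}}(Z(\gamma))\geq\widehat{vol}_{\mathcal{N}}(B_\gamma^{n-1})=\widehat{vol}_{\mathcal{N}}(\gamma)$. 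Without some input of this kind your proof of the volume equality is incomplete.
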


\begin{proof}
We first prove the first part of the above theorem. Indeed, as the Zariski decomposition here is an $(n-1,n-1)$-analogue of Boucksom's divisorial Zariski decomposition, the statement concerning $N(\gamma)$ can be proved using almost the same arguments as in \cite{Bou04}. In \cite{Bou04}, some arguments use Demailly's regularization theorem. As we do not have such a regularization theorem for $(n-1,n-1)$-currents, for reader's convenience, we present the details here. The assumption $\gamma\in \mathcal{N}^\circ$ will play the role as Demailly's regularization theorem in the divisorial Zariski decomposition situation.

We first show the claim ($\ast$): $N(\gamma)=\sum \nu(\gamma, C)[C]$ is the unique positive current in the class $\{N(\gamma)\}$ if $\gamma\in \mathcal{N}^\circ$.
We remark that claim ($\ast$) implies $\widehat{vol}_{\mathcal{N}}(\{N(\gamma)\})=0$ (or equivalently, $\{N(\gamma)\}\in \partial\mathcal{N}$). Otherwise, $\{N(\gamma)\}\in \mathcal{N}^\circ$. Fix a K\"ahler class $\omega$, then there exists a positive constant  $\delta>0$ such that $\{N(\gamma)\}-\delta\omega^{n-1} \in \mathcal{N}^\circ$. In particular, there exists a positive current $\Theta \in \{N(\gamma)\}$ such that $\Theta\geq \delta\omega^{n-1}$. Here we use the same symbol $\omega$ to represent a K\"ahler metric in the K\"ahler class $\omega$. Note that $H^{n-1,n-1}_A (X, \mathbb{R})\neq \{[0]_A\}$ over compact K\"ahler manifolds, thus there exists some smooth $(n-2, n-2)$-form $\psi$ such that $i\partial\bar\partial \psi\neq 0$. For $\varepsilon>0$ small enough, $$\Theta_\varepsilon := \Theta+\varepsilon i\partial\bar\partial \psi \in \{N(\gamma)\}$$
is a positive current and $\Theta_\varepsilon\neq \Theta$, contradicting our claim ($\ast$).

Now let us begin the proof of the claim ($\ast$). The proof is divided into several steps.

\begin{lemma}
\label{lemma multiplicity}
Let $\gamma\in \mathcal{N}^\circ$, then
$\nu(\gamma, C)=\underset{0\leq T\in \gamma}{inf} \nu(T,C)$
for any irreducible curve $C$.
\end{lemma}

\begin{proof}
To prove this, we only need to verify
$\nu(\gamma, x)=\underset{0\leq T\in \gamma}{inf} \nu(T,x)$
for any point $x$, then we will have $$\nu(\gamma, C)=\underset{x\in C}{inf}\nu(\gamma, x)=\underset{x\in C}{inf}\underset{0\leq T\in \gamma}{inf} \nu(T,x)
%=\underset{0\leq T\in \gamma}{inf} \underset{x\in C}{inf}\nu(T,x)
=\underset{0\leq T\in \gamma}{inf} \nu(T,C).$$

From the definition of $\nu(\gamma,x)$, we only need to prove
\begin{align}
\nu(\gamma,x)\geq \underset{0\leq T \in \gamma}{inf}\nu(T, x).
\end{align}
As $\gamma\in \mathcal{N}^{\circ}$, there exists a positive current $T\in \gamma$ such that $T\geq \beta^{n-1}$ for some K\"ahler metric $\beta$. Fix $\varepsilon>0$, for any $\delta>0$ there exists a current $T_{\varepsilon, \delta}\in \gamma[-\varepsilon\beta^{n-1}]$ such that
\begin{align}
\nu(T_{\varepsilon, \delta}, x)-\delta<\underset{T_\varepsilon}{inf}\nu(T_{\varepsilon}, x),
\end{align}
where $T_{\varepsilon}$ ranges among
$\gamma[-\varepsilon\beta^{n-1}] $.
Since $T\geq \beta^{n-1}$, we have $(1-\varepsilon)T_{\varepsilon,\delta}+ \varepsilon T\geq \varepsilon^2 \beta^{n-1}
$
which is a positive current in $\gamma$, thus
\begin{align}
\underset{0\leq T\in \gamma}{inf} \nu(T,x)&\leq
\nu((1-\varepsilon)T_{\varepsilon,\delta}+ \varepsilon T,x)\\
&\leq
(1-\varepsilon)\underset{T_\varepsilon }{inf}{\nu(T_\varepsilon, x)}
+(1-\varepsilon)\delta+ \varepsilon\nu(T,x).
\end{align}
Now let $\delta\rightarrow 0$ and then let $\varepsilon\rightarrow 0$, we get the desired inequality
$\nu(\gamma, x) \geq \underset{0\leq T\in \gamma}{inf} \nu(T,x)$.
\end{proof}

\begin{lemma}
\label{lemma big zariski projection}
(compare with Proposition 3.8 of \cite{Bou04}) Let $\gamma\in \mathcal{N}^\circ$, then $Z(\gamma)\in \mathcal{N}^\circ$ and
$\nu(Z(\gamma), C)=0$.
\end{lemma}

\begin{proof}
Once again, $\gamma\in \mathcal{N}^\circ$ implies there exists a positive current $T\in \gamma$ such that $T\geq \beta^{n-1}$ for some K\"ahler metric $\beta$. Apply Siu decomposition to the $d$-closed positive current $T-\beta^{n-1}$:
$$T-\beta^{n-1}=R+ \sum \nu(T-\beta^{n-1}, C)[C]=R+ \sum \nu(T, C)[C]$$
 for some residue positive current $R$. Then the definition of $N(\gamma)$ implies $$T-\beta^{n-1}-N(\gamma)\geq 0,$$
  which yields $T-N(\gamma)\geq \beta^{n-1}$. This implies $Z(\gamma)=\{T-N(\gamma)\}\in \mathcal{N}^\circ$. Indeed, by the above arguments, Siu decomposition also shows that any positive current in $Z(\gamma)$ are of the form $T-N(\gamma)$ for some positive current $T\in \gamma$. With Lemma \ref{lemma multiplicity} and this fact, we get
\begin{align}
\nu(Z(\gamma),C)&=\underset{0\leq \Gamma\in Z(\gamma)}{inf}\nu(\Gamma, C)\\
&= \underset{0\leq T\in \gamma}{inf}\nu(T-N(\gamma), C)\\
&=\nu(\gamma,C)-\nu(\gamma,C)=0.
\end{align}
\end{proof}

\begin{lemma}
\label{lemma neg eq}
Let $\gamma\in \mathcal{N}^\circ$, then $\{N(\gamma)\}=\{N(\{N(\gamma)\})\}$.
\end{lemma}

\begin{proof}
By the definition of $Z(\cdot)$, it is easy to see that $Z(\gamma_1 +\gamma_2)-Z(\gamma_1)-Z(\gamma_2)\in \mathcal{N}$ for any two $\gamma_1, \gamma_2\in \mathcal{N}$. In particular, we have $Z(\gamma )-Z(Z(\gamma))-Z(\{N(\gamma)\})\in \mathcal{N}$. Now Lemma \ref{lemma big zariski projection} implies $N(Z(\gamma))=\sum \nu(Z(\gamma), C)[C]=0$, so we have $Z(Z(\gamma))=Z(\gamma)-\{N(Z(\gamma))\}=Z(\gamma)$. And this yields $Z(\{N(\gamma)\})=0$, which is equivalent to the equality $\{N(\gamma)\}=\{N(\{N(\gamma)\})\}$.
\end{proof}

Now we can finish the proof of claim ($\ast$). Firstly, by the definition of $N(\{N(\gamma)\})$ and Siu decomposition, we have $N(\gamma)\geq N(\{N(\gamma)\})$. As Lemma \ref{lemma neg eq} shows they lie in the same Bott-Chern class, we must have $N(\gamma)= N(\{N(\gamma)\})$. For any positive current $T\in \{N(\gamma)\}$, using Siu decomposition and the definition of $N(\{N(\gamma)\})$ again, we have
$$T\geq\sum \nu(T, C)[C]\geq N(\{N(\gamma)\})=N(\gamma).$$
Thus $T=N(\gamma)$, and
$N(\gamma)$ is the unique positive current in the class $\{N(\gamma)\}$.

Next we show $N(\gamma)$ is an effective curve, that is, it is a finite sum of irreducible curves. Indeed, we will show $N(\gamma)$ is a sum of at most $\rho= dim_{\mathbb{R}} N_1 (X, \mathbb{R})$ irreducible curves. This follows from the following lemma.

\begin{lemma}
\label{lemma bouck injective }(compare with Proposition 3.11 of \cite{Bou04})
Let $\gamma\in \mathcal{N}^\circ$, and let $S$ the set of irreducible curves $C$ satisfying $\nu(\gamma, C)>0$, then $\# S \leq \rho$.
\end{lemma}

\begin{proof}
Take finite curves $C_1,...,C_k\in S$, and let $\Gamma=\sum_{i=1}^k a_i [C_i]$ with $a_i\in \mathbb{R}$. We claim that if the class $\{\Gamma\}=0$ then all $a_i =0$. This of course yields $\# S \leq \rho$. Write $\Gamma=\Gamma_+ - \Gamma_-$ such that both $\Gamma_+$ and $\Gamma_-$ are positive. Since we have assumed $\{\Gamma\}=0$, we have $N(\{\Gamma_+\})=N(\{\Gamma_-\})$. By the definition of $N(\gamma)$, we can take a positive constant $c$ large enough such that $\{cN(\gamma)-\Gamma_+\} \in \mathcal{N}$. By Lemma \ref{lemma neg eq} we know $Z(\{cN(\gamma)\})=cZ(\{N(\gamma)\})=0$, which implies $Z(\{\Gamma_+\})=0$. So we have $\{\Gamma_+\}=\{N(\{\Gamma_+\})\}$, and this implies $\Gamma_+=N(\{\Gamma_+\})$. This also holds for $\Gamma_-$. Combining with $N(\{\Gamma_+\})=N(\{\Gamma_-\})$, we get $\Gamma=\Gamma_+ - \Gamma_-=0$, which proves our claim.
\end{proof}

Finally let us prove Zariski projection preserves $\widehat{vol}_{\mathcal{N}}$. By the decomposition developed in \cite{LX15}, we know $\gamma\in \mathcal{N}^\circ$ can be uniquely decomposed as following:
\begin{align}
\label{eq L-X decomposition}
\gamma= B_\gamma ^{n-1}+ \zeta_\gamma
\end{align}
with $B_\gamma$ nef big, $\langle B_\gamma, \zeta_\gamma\rangle=0$, $\widehat{vol}_{\mathcal{N}}(B_\gamma ^{n-1})=\widehat{vol}_{\mathcal{N}}(\gamma)$ and $\zeta_\gamma\in \partial \mathcal{N}$. Denote by the same symbol $B_\gamma$ a smooth $(1,1)$-form in the class $B_\gamma$. Since $B_\gamma$ is nef, for any $\varepsilon>0$ there exists a smooth function $\psi_\varepsilon$ such that $B_\gamma+ \varepsilon \omega+ i\partial\bar \partial \psi_\varepsilon>0$. From this, it is easy to see for any $\varepsilon>0$ there exists a smooth $(n-2, n-2)$-form $\Psi_\varepsilon$ such that $$\Omega_\varepsilon:=B_\gamma^{n-1}+i\partial\bar \partial \Psi_\varepsilon \geq -\varepsilon \omega^{n-1}.$$
Denote by $T_\gamma$ a positive $(n-1,n-1)$-current in the class $\zeta_\gamma$, then $\Omega_\varepsilon+T_\gamma\in \gamma[-\varepsilon \omega^{n-1}]$. And by the definition of minimal multiplicity (see Definition \ref{def minimal multi}), we get
\begin{align}
\nu(\gamma, x)&=\underset{\varepsilon>0}{sup}\ \underset{T_\varepsilon}{inf}\nu(T_\varepsilon, x)\\
&\leq \underset{\varepsilon>0}{sup}\ \nu(\Omega_\varepsilon+T_\gamma,x)\\
&=\nu(T_\gamma,x).
\end{align}
The last line follows because $\Omega_\varepsilon$ is smooth. By Siu decomposition, the above inequality implies $\zeta_\gamma-\{N(\gamma)\}\in \mathcal{N}$. Thus $Z(\gamma)-B_\gamma ^{n-1}=\zeta_\gamma-\{N(\gamma)\}\in \mathcal{N}$, which yields
$$
\widehat{vol}_{\mathcal{N}}(Z(\gamma))\geq \widehat{vol}_{\mathcal{N}}(B_\gamma ^{n-1}).
$$
Combining with $\widehat{vol}_{\mathcal{N}}(\gamma)\geq \widehat{vol}_{\mathcal{N}}(Z(\gamma))$ and $\widehat{vol}_{\mathcal{N}}(\gamma)=\widehat{vol}_{\mathcal{N}}(B_\gamma ^{n-1})$, we finish the proof of the equality $\widehat{vol}_{\mathcal{N}}(\gamma)=\widehat{vol}_{\mathcal{N}}(Z(\gamma))$.
\end{proof}

\begin{remark}
\label{rmk remove big}
It will be interesting to know whether the statement for $N(\gamma)$ in Theorem \ref{thm zariski} is still true for $\gamma\in \partial\mathcal{N}$. Our above arguments show that the assumption $\gamma\in \mathcal{N}^\circ$ is important in Lemma \ref{lemma multiplicity}. And we need Lemma \ref{lemma multiplicity} to prove the other lemmas.
\end{remark}

\begin{remark}
\label{rmk z proj not mov}
One may expect that $Z(\gamma)$ could be represented by some positive smooth $(n-1,n-1)$-form, more precisely, one may expect $Z(\gamma)\in \overline{\mathcal{M}}$. Thus, by Proposition \ref{prop smoothing gauduchon} and Proposition \ref{prop smoothing balanced}, there exists a smooth positive $(n-1,n-1)$-form in the class $Z(\gamma)$ if $Z(\gamma)$ is an interior point of $\overline{\mathcal{M}}$. However, in general, $Z(\gamma)$ could not be a movable class.
Let $\pi: X\rightarrow \mathbb{P}^3$ be the blow-up along a point, and let $E= \mathbb{P}^2$ be the exceptional divisor. Let $\omega_{FS}$ be the Fubini-Study metric of $\mathbb{P}^3$ and let $\mathbb{P}^1 \subseteq E$ be a line of $E$, then we claim that
$$
Z(\{\pi^* (\omega_{FS} ^2) + [\mathbb{P}^1]\})= \{\pi^* (\omega_{FS} ^2 )+ [\mathbb{P}^1]\} \in \mathcal{N}^\circ \setminus \overline{\mathcal{M}}.$$
Firstly, it is easy to see $\{\pi^* (\omega_{FS} ^2 )+ [\mathbb{P}^1]\} \in \mathcal{N}^\circ$ which of course implies $Z(\{\pi^* (\omega_{FS} ^2) + [\mathbb{P}^1]\})\in \mathcal{N}^\circ$. For any point $x$, we can always choose an integration current in the class $[\mathbb{P}^1]$ but with its support avoiding $x$. Then we have
$\nu(\{\pi^* (\omega_{FS} ^2 )+ [\mathbb{P}^1]\}, x)=0$, which yields the equality
$$Z(\{\pi^* (\omega_{FS} ^2) + [\mathbb{P}^1]\})= \{\pi^* (\omega_{FS} ^2 )+ [\mathbb{P}^1]\}.
$$
Since we have $\{\pi^* (\omega_{FS} ^2 )+ [\mathbb{P}^1]\}\cdot E=-1$, the class $\{\pi^* (\omega_{FS} ^2 )+ [\mathbb{P}^1]\}$ can not be movable. Comparing with the Zariski decompositions developed in \cite{FL13} and \cite{LX15}, $Z(\gamma)$ not always being movable is its disadvantage in many applications.
Anyhow, if $\gamma\in \mathcal{N}^ \circ$, then Lemma \ref{lemma multiplicity} and Lemma \ref{lemma big zariski projection} show that we can always choose a positive current in the class $Z(\gamma)$ with its Lelong number along any curve being arbitrarily small. In some sense, this means that $Z(\gamma)$ is less singular than $\gamma$.
Indeed, $Z(\gamma)\in \overline{\mathcal{K}}$ if $X$ is a K\"ahler surface.
\end{remark}

At the end of this section, we show that Zariski decomposition for 1-cycles is trivial for compact K\"ahler manifold with nef tangent bundle.

\begin{proposition}
\label{prop nef TX}
Let $X$ be a compact K\"ahler manifold with nef tangent bundle, then $\gamma=Z(\gamma)$ for any $\gamma\in \mathcal{N}$. Indeed, we will have $\gamma=Z(\gamma) \in \overline{\mathcal{M}}$.
\end{proposition}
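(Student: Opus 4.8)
The plan is to reduce the whole statement to one geometric input — that a nef tangent bundle forces the pseudo-effective cone of $(1,1)$-classes to coincide with the nef cone — and then to run the cone dualities already at our disposal. Concretely, I would first establish that $\mathcal{E}=\overline{\mathcal{K}}$ on $X$. The point is that when $T_X$ is nef, every irreducible curve $C$ has nef normal bundle, since $N_{C/X}$ is a quotient of the nef bundle $T_X|_C$; hence $C$ deforms in a family sweeping out $X$. Pairing a positive $(1,1)$-current $T\in\alpha$ against a generic member of such a covering family, which avoids the polar set of $T$, shows $\langle\alpha,C\rangle\ge 0$, so every pseudo-effective $(1,1)$-class is nef. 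The reverse inclusion being trivial, this yields $\mathcal{E}=\overline{\mathcal{K}}$. Where the deformation argument needs to be made precise (higher genus, or verifying that the deformations genuinely cover $X$) I would appeal to the structure theory of Demailly--Peternell--Schneider for manifolds with nef tangent bundle.

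Granting $\mathcal{E}=\overline{\mathcal{K}}$, I would feed it into the dualities. By Demailly--Paun \cite{DP04} we have $\mathcal{K}^{\vee}=\mathcal{N}$, so $\mathcal{N}=(\overline{\mathcal{K}})^{\vee}=\mathcal{E}^{\vee}$; and by Proposition \ref{prop cone dual}, under the Poincar\'e duality $H^{1,1}_{BC}\cong H^{1,1}$, $H^{n-1,n-1}\cong H^{n-1,n-1}_A$ valid on the K\"ahler manifold $X$, we have $\mathcal{E}^{\vee}=\overline{\mathcal{G}}$. Therefore $\mathcal{N}=\overline{\mathcal{G}}$, so every interior class of $\mathcal{N}$ is represented by a smooth strictly positive $(n-1,n-1)$-form, namely a Gauduchon metric.

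From this the vanishing of the negative part is immediate. For $\gamma\in\mathcal{N}^{\circ}$ a smooth positive representative has identically vanishing Lelong numbers, so $\nu(\gamma,x)=0$ for every $x$, whence $N(\gamma)=0$ and $\gamma=Z(\gamma)$. For $\gamma\in\partial\mathcal{N}$ I would write $\gamma=\lim_{j}\gamma_j$ with $\gamma_j\in\mathcal{G}$; fixing $\varepsilon>0$, for $j$ large I represent $\gamma_j$ by a Gauduchon metric and the cohomologically small correction $\gamma-\gamma_j$ by a smooth form of small sup-norm, producing a smooth current in $\gamma[-\varepsilon\omega^{n-1}]$ with zero Lelong number at every point. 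Hence $\nu(\gamma,x)=0$ once more, so $N(\gamma)=0$ and $\gamma=Z(\gamma)$ for all $\gamma\in\mathcal{N}$, as claimed.

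It remains to place $\gamma$ in the movable cone itself, i.e. to upgrade $\mathcal{N}=\overline{\mathcal{G}}$ to $\mathcal{N}=\overline{\mathcal{M}}$. The inclusion $\overline{\mathcal{M}}\subseteq\mathcal{N}$ is automatic. For the reverse, in the projective case $\mathcal{E}_{NS}=\overline{Amp}$ together with the BDPP duality $\mathcal{E}_{NS}^{\vee}=\overline{\mathcal{M}}_{NS}$ \cite{BDPP13} and Kleiman's criterion gives $\overline{\mathcal{M}}_{NS}=\mathcal{E}_{NS}^{\vee}=\overline{Amp}^{\vee}=\overline{NE}=\mathcal{N}$; in the general K\"ahler case one invokes the transcendental duality $\mathcal{E}^{\vee}=\overline{\mathcal{M}}$, equivalently the coincidence $\overline{\mathcal{M}}=\overline{\mathcal{B}}=\overline{\mathcal{G}}$ of Remark \ref{rmk cone coincide}, which is forced here by $\mathcal{E}=\overline{\mathcal{K}}$. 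I expect this last step to be the main obstacle in the transcendental setting: identifying the Gauduchon (or balanced) closure with the genuine movable cone either needs BDPP, available only projectively, or requires the Demailly--Peternell--Schneider structure theorem to exhibit explicit movable representatives, namely translates of curves on the abelian factor and rational curves moving in the Fano fibers. The input $\mathcal{E}=\overline{\mathcal{K}}$ is the other delicate point, where the covering property of the deforming curves must be argued carefully.
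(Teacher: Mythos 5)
There are two genuine gaps here, and they sit exactly at the two places you flag as ``delicate.'' First, your route to $\mathcal{E}=\overline{\mathcal{K}}$ does not work as stated: nefness of $N_{C/X}$ does not force $C$ to deform (one needs sections of the normal bundle, not just nefness), and even granting a covering family, nonnegativity of $\langle\alpha,C\rangle$ against curves only controls the dual of the cone of curves, which in the non-projective K\"ahler setting does not characterize nefness (by Demailly--Paun one must test against $\int_Y\alpha^p$ for subvarieties of all dimensions). This step needs no new argument at all: $\overline{\mathcal{K}}=\mathcal{E}$ for nef $T_X$ is Corollary 1.5 of \cite{Dem92}, proved by current regularization, and that is what the paper cites.

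Second, and more seriously, the step you leave open is the actual content of the proof. Deducing $\mathcal{N}=\overline{\mathcal{G}}$ from $\mathcal{E}^{\vee}=\overline{\mathcal{G}}$ is not enough to kill the minimal multiplicities: a Gauduchon representative is only $dd^c$-closed and represents the \emph{Aeppli} class, so it is not of the form $T+i\partial\bar\partial\psi$ for a $d$-closed current $T$ in the Bott--Chern class $\gamma$, and hence is not an admissible competitor in the definition of $\nu(\gamma,x)$. One needs $d$-closed smooth positive representatives, i.e.\ $\mathcal{N}=\overline{\mathcal{B}}$ (balanced cone), and in fact $\mathcal{N}=\overline{\mathcal{M}}$ for the final assertion. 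The paper closes this by observing that $\mathcal{E}=\overline{\mathcal{K}}$ makes the weak transcendental Morse inequality $vol(\alpha-\beta)\geq\alpha^n-n\,\alpha^{n-1}\cdot\beta$ \emph{provable} for nef $\alpha,\beta$: if $\alpha^n-n\alpha^{n-1}\cdot\beta>0$ then $\alpha-\beta\in\mathcal{E}^{\circ}=\mathcal{K}$ by \cite{Xia13,Pop14}, so $\alpha-t\beta$ stays nef and one integrates $\frac{d}{dt}\,vol(\alpha-t\beta)=-n(\alpha-t\beta)^{n-1}\cdot\beta\geq -n\alpha^{n-1}\cdot\beta$. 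Running the BDPP argument with this inequality gives the transcendental duality $\mathcal{E}^{\vee}=\overline{\mathcal{M}}=\overline{\mathcal{B}}$ unconditionally on these manifolds, whence $\mathcal{N}=\overline{\mathcal{K}}^{\vee}=\mathcal{E}^{\vee}=\overline{\mathcal{B}}$; every class in $\mathcal{N}$ then admits smooth $d$-closed forms $\Omega_\varepsilon\geq-\varepsilon\omega^{n-1}$, so $\nu(\gamma,x)=0$ and $N(\gamma)=0$. Your boundary-approximation step is then unnecessary, and the conclusion $\gamma\in\overline{\mathcal{M}}$ comes for free. Without this Morse-inequality input your argument establishes at best $N(\gamma)=0$ under an unjustified identification, and does not reach $\gamma\in\overline{\mathcal{M}}$ at all.
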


\begin{proof}
This follows from Demailly's regularization theorem of positive $(1,1)$-currents and our previous work on transcendental holomorphic Morse inequality.

If $TX$ is nef, then $\overline{\mathcal{K}}=\mathcal{E}$ (see Corollary 1.5 of \cite{Dem92}). Now let $\alpha, \beta\in \overline{\mathcal{K}}$ be two nef classes such that $\alpha^n -n \alpha^{n-1}\cdot \beta>0$, then $\alpha-\beta$ must be an interior point of $\mathcal{E}$ (see \cite{Xia13}, \cite{Pop14}). By $\overline{\mathcal{K}}=\mathcal{E}$,  $\alpha-\beta$ must be a K\"ahler class. In particular, $\alpha-t\beta \in \overline{\mathcal{K}}$ for $t\in [0,1]$.

Consider the difference $vol(\alpha-\beta)-vol(\alpha)$, we have
\begin{align*}
\label{eq rmk morse}
vol(\alpha-\beta)-vol(\alpha)&=\int_{0} ^1 \frac{d}{dt}vol(\alpha-t\beta)dt\\
&=\int_{0} ^1 -n(\alpha-t\beta)^{n-1} \cdot \beta dt\\
&\geq -n\alpha^{n-1} \cdot \beta,
\end{align*}
thus $vol(\alpha-\beta)\geq \alpha^n -n\alpha^{n-1} \cdot \beta$. Using the same arguments as \cite{BDPP13}, this of course implies the cone duality $\mathcal{E}^ \vee =\overline{\mathcal{M}}$. And this yields $\mathcal{E}^ \vee =\overline{\mathcal{M}}=\overline{\mathcal{B}}$ (see e.g. \cite{FX14}). Using $\overline{\mathcal{K}}=\mathcal{E}$ again, $\overline{\mathcal{K}}^ \vee = \mathcal{N}$ implies $\mathcal{N}=\overline{\mathcal{B}}$. Since $\gamma\in \mathcal{N}=\overline{\mathcal{B}}$, for any $\varepsilon>0$ there exists a smooth $(n-1,n-1)$-form $\Omega_\varepsilon \in \gamma$ such that $\Omega_\varepsilon\geq -\varepsilon \omega^{n-1}$. Now by the definition of minimal multiplicity (see Definition \ref{def minimal multi}), we get $\nu(\gamma,x)=0$ for every point, yielding $N(\gamma)=0$. This implies $\gamma= Z(\gamma)$.
\end{proof}

\section{Further discussions}

\subsection{Another invariant of movable class}

As remarked in the previous section, under the assumption of the conjecture on transcendental holomorphic Morse inequality, we will have $\overline{\mathcal{M}}=\overline{\mathcal{G}}=\overline{\mathcal{B}}$. And invariant of Gauduchon or balanced classes would be invariant of movable class.
Inspired by our previous work \cite{FX14}, we introduce another invariant $\mathfrak{M}_{CY}$ of Gauduchon class by using form-type Calabi-Yau equations (or complex Monge-Amp\`{e}re equations for $(n-1)$-plurisubharmonic functions) (see \cite{FWW10}, \cite{TW13a}, \cite{TW13b}).

\begin{definition}
\label{def vol_B G}
Let $X$ be an $n$-dimensional compact K\"ahler manifold, and let $\gamma$ be a Gauduchon class. Then we define $\mathfrak{M}_{CY}(\gamma)$ as following:
$$
\mathfrak{M}_{CY}(\gamma):=\underset{\Phi, \omega}{sup}
\ \{c_{\Phi, \omega}\}
$$
where
$c_{\Phi, \omega}$ is a positive constant satisfying $\omega^n = c_{\Phi, \omega} \Phi$ such that $\Phi$ is a smooth volume form with $\int \Phi =1$ and $\omega^{n-1}\in \gamma$ is a Gauduchon metric.
\end{definition}

Assume $\gamma=\alpha^{n-1}$ for some $\alpha\in {\mathcal{K}}$, we prove that $\mathfrak{M}_{CY}(\gamma)=vol(\alpha)$.

\begin{proposition}
\label{prop vol_B G coincide nef}
Let $X$ be an $n$-dimensional compact K\"ahler manifold, and let $\gamma=\alpha^{n-1}$ for some K\"ahler class $\alpha$. Then we have $\mathfrak{M}_{CY}(\gamma)=vol(\alpha)$.
\end{proposition}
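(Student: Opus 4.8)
My plan is to reduce the claim to the already-established identity $\mathfrak{M}(\gamma)=vol(\alpha)$ of Proposition \ref{prop vol hat coin kahler}, rather than attempting a direct cohomological computation of $\int_X\omega^n$. The first step is to unwind the definition of $\mathfrak{M}_{CY}$. Since any competing pair $(\Phi,\omega)$ satisfies $\omega^n=c_{\Phi,\omega}\Phi$ with $\int_X\Phi=1$, integrating gives $c_{\Phi,\omega}=\int_X\omega^n$ (and forces $\Phi=\omega^n/\int_X\omega^n$). Hence
\[
\mathfrak{M}_{CY}(\gamma)=\sup\Big\{\textstyle\int_X\omega^n : \omega\text{ Gauduchon},\ [\omega^{n-1}]_A=\gamma\Big\}.
\]
The lower bound is then immediate: taking $\omega$ to be an honest K\"ahler metric in the class $\alpha$ (which is Gauduchon and satisfies $[\omega^{n-1}]_A=\alpha^{n-1}=\gamma$) yields $\int_X\omega^n=vol(\alpha)$, so $\mathfrak{M}_{CY}(\gamma)\ge vol(\alpha)$.

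The essential content is the reverse inequality, and here the main obstacle is that $\int_X\omega^n=\int_X\omega\wedge\omega^{n-1}$ is \emph{not} a cohomological quantity, because a Gauduchon $\omega$ need not be $d$-closed. Pairing the class $\gamma$ against the K\"ahler class only produces the single scalar constraint $\int_X\alpha\wedge\omega^{n-1}=\langle\alpha,\gamma\rangle=vol(\alpha)$, and one checks that this, combined with the pointwise Khovanskii--Teissier (AM--GM) inequality $\alpha\wedge\omega^{n-1}\ge (\alpha^n)^{1/n}(\omega^n)^{(n-1)/n}$, is genuinely insufficient to bound $\int_X\omega^n$ from above (mass of $\omega^n$ can concentrate). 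The way around this is to test against the \emph{whole} family $\mathcal{E}_1$ defining $\mathfrak{M}$, and to prescribe the Monge--Amp\`ere mass of the competing current via Boucksom's singular Calabi--Yau theorem, exactly as in the proof of Proposition \ref{prop vol hat coin kahler}.

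Concretely, I would fix an admissible pair $(\Phi,\omega)$, set $c=\int_X\omega^n$ so that $\omega^n=c\Phi$, and take an arbitrary $\beta\in\mathcal{E}_1$. Since $vol(\beta)=1=\int_X\Phi$, the singular Calabi--Yau theorem (see \cite{Bou02a}) furnishes a positive current $T\in\beta$ with $T_{ac}^n=\Phi$ almost everywhere. Then, using that $T_{ac}$ has $L^1_{loc}$ coefficients and the pointwise AM--GM inequality,
\[
\langle \beta, \gamma \rangle = \int_X T \wedge \omega^{n-1} \ge \int_X T_{ac} \wedge \omega^{n-1} \ge \int_X (T_{ac}^n)^{1/n} (\omega^n)^{(n-1)/n} = \int_X c^{\frac{n-1}{n}}\,\Phi = c^{\frac{n-1}{n}},
\]
so $\langle\beta,\gamma\rangle^{\frac{n}{n-1}}\ge c$. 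Taking the infimum over $\beta\in\mathcal{E}_1$ gives $\mathfrak{M}(\gamma)\ge c$, and then the supremum over all admissible $(\Phi,\omega)$ yields $\mathfrak{M}(\gamma)\ge\mathfrak{M}_{CY}(\gamma)$. Invoking $\mathfrak{M}(\gamma)=vol(\alpha)$ from Proposition \ref{prop vol hat coin kahler} we conclude $\mathfrak{M}_{CY}(\gamma)\le vol(\alpha)$, and combined with the lower bound this proves the equality. I expect the only delicate points to be verifying that the prescribed volume form $\Phi=\omega^n/c$ is smooth and strictly positive (immediate, since $\omega$ is a smooth Gauduchon metric) and checking the total-mass compatibility needed to apply the singular Calabi--Yau theorem; the form-type Calabi--Yau equations of \cite{FWW10,TW13a,TW13b} are needed only to guarantee that the set of admissible pairs in the definition of $\mathfrak{M}_{CY}$ is rich, not in the estimate itself.
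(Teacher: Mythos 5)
Your proof is correct, but it follows a genuinely different route from the paper's. For the upper bound, the paper works entirely inside the K\"ahler class $\alpha$: given an admissible pair $(\Phi,\omega)$ with $\omega^n=c_{\Phi,\omega}\Phi$, it invokes Yau's (smooth) Calabi--Yau theorem to produce a K\"ahler metric $\alpha_\psi\in\alpha$ with $\alpha_\psi^n=\frac{vol(\alpha)}{c_{\Phi,\omega}}\,\omega^n$, and then applies the pointwise inequality $\omega^{n-1}\wedge\alpha_\psi\geq(\alpha_\psi^n/\omega^n)^{1/n}\omega^n$ together with the well-definedness of $\int\omega^{n-1}\wedge\alpha_\psi=\langle\alpha,\gamma\rangle=vol(\alpha)$ to get $c_{\Phi,\omega}\leq vol(\alpha)$ directly. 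You instead test $\gamma$ against \emph{all} of $\mathcal{E}_1$, using Boucksom's singular Calabi--Yau theorem to prescribe $T_{ac}^n=\Phi$ for each $\beta\in\mathcal{E}_1$, thereby establishing the general inequality $\mathfrak{M}_{CY}(\gamma)\leq\mathfrak{M}(\gamma)$ and then quoting $\mathfrak{M}(\alpha^{n-1})=vol(\alpha)$ from Proposition \ref{prop vol hat coin kahler}. This is exactly the paper's proof of the subsequent Proposition \ref{prop compare vol_. vol}, so in effect you prove that more general statement first and specialize; there is no circularity, since that inequality does not use Proposition \ref{prop vol_B G coincide nef}. The trade-off: your route needs the deeper singular version of the Calabi--Yau theorem and the earlier Proposition \ref{prop vol hat coin kahler}, but yields the comparison $\mathfrak{M}_{CY}\leq\mathfrak{M}$ for arbitrary Gauduchon classes as a byproduct; the paper's argument is self-contained modulo Yau's theorem and only uses positivity within the single class $\alpha$. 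Your preliminary observation that $c_{\Phi,\omega}=\int_X\omega^n$ (so the supremum is just over Gauduchon representatives $\omega^{n-1}$ of $\gamma$) and your simpler lower bound via a K\"ahler representative are both correct and slightly streamline the paper's presentation.
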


\begin{proof}
Firstly, since $\alpha$ is a K\"ahler class, by Calabi-Yau theorem (see \cite{Yau78}), there exists an unique K\"ahler metric $\alpha_u\in \alpha$ such that $\alpha_u ^n =vol(\alpha)\Phi$. In particular, $c_{\Phi, \alpha_u}=vol(\alpha)$, thus $vol(\alpha)\leq \mathfrak{M}_{CY}(\gamma)$. We claim that, for any $\Phi, \omega$ in the definition of $\mathfrak{M}_{CY}(\gamma)$, we have $$c_{\Phi, \omega}\leq vol(\alpha).$$
For any fixed such $\Phi, \omega$, we first apply Calabi-Yau theorem to find a K\"ahler metric $\alpha_\psi$ such that $$\alpha_\psi ^ n =\frac{vol(\alpha)}{c_{\Phi,\omega}}\omega^n.$$
Using the following pointwise inequality $$\omega^{n-1}\wedge \alpha_\psi \geq
(\frac{\alpha_\psi^n}{\omega^n})^{\frac{1}{n}} \omega^n$$
and $\omega^{n-1}\in \gamma=\alpha^{n-1}$ being Gauduchon, we estimate $vol(\alpha)$ as following:
$$
vol(\alpha)=\int \gamma\wedge \alpha
=\int \omega^{n-1}\wedge \alpha_\psi
\geq\int (\frac{\alpha_\psi^n}{\omega^n})^{\frac{1}{n}} \omega^n
= vol(\alpha)^{\frac{1}{n}}c_{\Phi, \omega} ^{\frac{n-1}{n}}.
$$
This of course implies $\mathfrak{M}_{CY}(\gamma)\leq vol(\alpha)$. Combining with $vol(\alpha)\leq \mathfrak{M}_{CY}(\gamma)$, we get the desired equality $\mathfrak{M}_{CY}(\gamma)=vol(\alpha)$.
\end{proof}

Note that $\mathfrak{M}_{CY}$ is an analytical invariant by solving non-linear PDEs, and $\mathfrak{M}$ is an intersection-theoretic invariant. It will be very interesting to compare $\mathfrak{M}_{CY}$ and $\mathfrak{M}$, and we have the following proposition.

\begin{proposition}
\label{prop compare vol_. vol}
Let $X$ be an $n$-dimensional compact K\"ahler manifold, and let $\gamma$ be a Gauduchon class. Then we always have $\mathfrak{M}_{CY}(\gamma)\leq \mathfrak{M}(\gamma)$. Moreover, they coincide over K\"ahler classes, that is, $\mathfrak{M}_{CY}(\alpha^{n-1})=\mathfrak{M}(\alpha^{n-1})$ for any K\"ahler class $\alpha$.
\end{proposition}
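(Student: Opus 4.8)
The plan is to reduce the inequality $\mathfrak{M}_{CY}(\gamma)\le\mathfrak{M}(\gamma)$ to a Khovanskii--Teissier type estimate of exactly the same shape as the one established in the proof of Proposition \ref{prop vol hat coin kahler}, the only difference being that the metric $\omega$ is now merely Gauduchon instead of Kähler. The starting observation is that the constraint in Definition \ref{def vol_B G} pins down $\Phi$: if $\omega^n=c_{\Phi,\omega}\Phi$ holds pointwise with $\Phi$ a smooth volume form of total mass one, then necessarily $\Phi=\omega^n/\int_X\omega^n$ and $c_{\Phi,\omega}=\int_X\omega^n$. Hence
$$\mathfrak{M}_{CY}(\gamma)=\underset{\omega}{\sup}\Big\{\int_X\omega^n:\ \omega\ \text{Gauduchon},\ \omega^{n-1}\in\gamma\Big\},$$
and it suffices to bound $\int_X\omega^n$ by $\mathfrak{M}(\gamma)$ for every admissible Gauduchon metric $\omega$.

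First I would fix such an $\omega$ (so $\omega^{n-1}$ is $dd^c$-closed and represents $\gamma$) together with an arbitrary pseudo-effective class $\beta\in\mathcal{E}_1$, and prove $\langle\beta,\gamma\rangle^{\frac{n}{n-1}}\ge\int_X\omega^n$. Since $vol(\beta)=1>0$ the class $\beta$ is big, so by the singular Calabi--Yau theorem (see \cite{Bou02a}) there is a positive $(1,1)$-current $T\in\beta$ with $T_{ac}^n=\big(\int_X\omega^n\big)^{-1}\omega^n$ almost everywhere, the normalization being chosen so that $\int_X T_{ac}^n=vol(\beta)=1$. Writing $\gamma=[\omega^{n-1}]_A$ and computing the pairing by integration gives $\langle\beta,\gamma\rangle=\int_X T\wedge\omega^{n-1}\ge\int_X T_{ac}\wedge\omega^{n-1}$, where the inequality uses that the singular part $T-T_{ac}$ is a positive current and $\omega^{n-1}$ a positive form. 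The pointwise arithmetic--geometric mean inequality $T_{ac}\wedge\omega^{n-1}\ge(T_{ac}^n/\omega^n)^{1/n}\omega^n$, valid for any Hermitian $\omega$, then yields
$$\langle\beta,\gamma\rangle\ge\int_X\Big(\frac{1}{\int_X\omega^n}\Big)^{\frac1n}\omega^n=\Big(\int_X\omega^n\Big)^{\frac{n-1}{n}},$$
and raising to the power $n/(n-1)$ gives the claimed estimate. This is the verbatim analogue of the computation in Proposition \ref{prop vol hat coin kahler}; the essential point is that Kählerness of $\omega$ is never used, only that $\omega^n$ is a smooth volume form and $\omega^{n-1}$ a positive $dd^c$-closed form.

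Taking the infimum over $\beta\in\mathcal{E}_1$ gives $\int_X\omega^n\le\mathfrak{M}(\gamma)$ by Definition \ref{def hat vol movable}, and taking the supremum over admissible $\omega$ then gives $\mathfrak{M}_{CY}(\gamma)\le\mathfrak{M}(\gamma)$. For the coincidence over Kähler classes there is nothing further to prove: Proposition \ref{prop vol hat coin kahler} gives $\mathfrak{M}(\alpha^{n-1})=vol(\alpha)$ and Proposition \ref{prop vol_B G coincide nef} gives $\mathfrak{M}_{CY}(\alpha^{n-1})=vol(\alpha)$, so both invariants equal $vol(\alpha)$ and hence equal each other.

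The step I expect to demand the most care is the current-theoretic input: justifying that for a big class $\beta$ one may solve the singular Calabi--Yau equation prescribing $T_{ac}^n$ proportional to the fixed smooth volume form $\omega^n$, and that the resulting (possibly singular) positive current computes the Bott-Chern/Aeppli pairing through $\langle\beta,\gamma\rangle=\int_X T\wedge\omega^{n-1}$ with the singular part contributing nonnegatively. Everything else—the normalization fixing $\Phi$, the pointwise inequality, and the passage to infimum and supremum—is formal, and the Gauduchon rather than Kähler nature of $\omega$ causes no trouble since it enters only through the smooth positive objects $\omega^n$ and $\omega^{n-1}$.
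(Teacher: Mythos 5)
Your proposal is correct and follows essentially the same route as the paper: the singular Calabi--Yau theorem of \cite{Bou02a} applied to a normalized big class $\beta$, followed by the pointwise arithmetic--geometric mean inequality against the Gauduchon form $\omega^{n-1}$, then taking the infimum over $\beta$ and the supremum over $\omega$; the Kähler case is likewise handled by combining Proposition \ref{prop vol hat coin kahler} with Proposition \ref{prop vol_B G coincide nef}. The only cosmetic difference is that you observe up front that the constraint forces $\Phi=\omega^n/\int_X\omega^n$ and $c_{\Phi,\omega}=\int_X\omega^n$, whereas the paper keeps $\Phi$ as a free parameter and runs the identical estimate for an arbitrary admissible pair $(\Phi,\omega)$.
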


\begin{proof}
For any smooth volume form $\Phi$ with $\int \Phi=1$ and any $\beta\in \mathcal{E}$ with $vol(\beta)=1$, by the singular version of Calabi-Yau theorem (see \cite{Bou02a}), there exists a positive $(1,1)$-current $T\in \beta$ such that $T_{ac}^n = \Phi$ almost everywhere. Now for any Gauduchon metric $\omega^{n-1}\in \gamma$ in the definition of $c_{\Phi, \omega}$, we get
 $$\langle\beta, \gamma\rangle=\int T\wedge \omega^{n-1}\geq
 \int T_{ac} \wedge \omega^{n-1}\geq \int (\frac{T_{ac}^n}{\Phi})^{\frac{1}{n}}
 (\frac{\omega^n}{\Phi})^{\frac{n-1}{n}}\Phi =c_{\Phi,\omega}^{\frac{n-1}{n}}.$$
Since $\beta$, $\omega^{n-1}$ and $\Phi$ are (conditionally) arbitrary, we get $\mathfrak{M}_{CY}(\gamma)\leq \mathfrak{M}(\gamma)$.

By Proposition \ref{prop vol hat coin kahler} and Proposition \ref{prop vol_B G coincide nef}, we have $\mathfrak{M}_{CY}(\alpha^{n-1})=\mathfrak{M}(\alpha^{n-1})$ for any K\"ahler class $\alpha$.
\end{proof}

\begin{remark}
The above proposition also implies that $\mathfrak{M}_{CY}(\gamma)$ is always well defined over compact K\"ahler manifolds, that is, $\mathfrak{M}_{CY}(\gamma)<\infty$, which is not explicit from its definition.
\end{remark}

\begin{remark}
Let $X$ be an $n$-dimensional compact K\"ahler manifold, \emph{we do not know whether $\mathfrak{M}_{CY}(\gamma)= \mathfrak{M}(\gamma)$ for any $\gamma\in \mathcal{G}$}. As we always have $\mathfrak{M}_{CY}(\gamma)\leq \mathfrak{M}(\gamma)$, we only need to show $\mathfrak{M}_{CY}(\gamma)\geq \mathfrak{M}(\gamma)$.
We also want to know the behaviour of $\mathfrak{M}_{CY}$
under bimeromorphic maps (compare with Proposition \ref{prop modification vol_ G B}). In particular, \emph{we do not know whether we have $\mathfrak{M}_{CY}(\mu_* \tilde{\gamma})\geq \mathfrak{M}_{CY}(\tilde{\gamma})$}. If this would be true, then we can use this invariant in Theorem \ref{thm characterize vol of divisor}. It will also be very interesting to study the concavity of $\mathfrak{M}_{CY}$. To study these problems, we need know more about the family of constants $c_{\Phi, \omega}$ in the definition of $\mathfrak{M}_{CY}$.
\end{remark}

\subsection{A general approach}

This section comes from a suggestion of Mattias Jonsson. Let $\mathcal{C}\subseteq V$ be a proper convex cone of a real vector space. Let $u: \overline{\mathcal{C}}\rightarrow \mathbb{R}_{+} $ be a continuous function. Let $p>1$ be a constant.
Let $\mathcal{C}^ \vee \subseteq V^*$ be the dual of $\mathcal{C}$. In general, we can define the dual of $u$ in the following way:
\begin{align*}
\widehat{u}(x^*):=\underset{y\in \mathcal{C}_1}{inf}\langle x^*, y\rangle^q,
\end{align*}
where $\mathcal{C}_1=\{y\in \mathcal{C}|\ u(y)=1\}$ and $\frac{1}{p}+\frac{1}{q}=1$. This is similar to some kind of Legendre-Fenchel transform. It is easy to see $\widehat{u}^{\frac{1}{q}}$ is concave and homogeneous of degree one over $\mathcal{C}^ \vee$. If we assume $u^{1/p}$ is concave and homogeneous of degree one. Then we have
\begin{align*}
\widehat{u}(x^*):=\underset{y\in \mathcal{C}_{\geq 1}}{inf}\langle x^*, y\rangle^q,
\end{align*}
where $\mathcal{C}_{\geq 1}=\{y\in \mathcal{C}|\ u(y)\geq 1\}$. Since $u^{1/p}$ is concave, $\mathcal{C}_{\geq 1}$ is a convex closed subset of $\mathcal{C}$.

In our definition of $\widehat{vol}_\mathcal{N}$ for 1-cycles over compact K\"ahler manifold, we have $\mathcal{C}=\mathcal{K}$, $u=vol$ and $p=1/n$. For $1<k<n-1$, let $\mathcal{N}_k \in H_{BC}^{k,k}(X, \mathbb{R})$ be the cone generated by $d$-closed positive $(k,k)$-currents. It will be interesting if one can generalize this kind of construction of volume to $\mathcal{N}_k$, thus define a volume functional for general $k$-cycles. Principally, we first need to define a function $u$ on some kind of smooth positive $(n-k, n-k)$-forms. However, unlike the case for the cone $\mathcal{N}$, the structure of the dual of $\mathcal{N}_k$ is not clear (and indeed this problem is still widely open). As a starting point, it will be very interesting to carry out the above general approach over toric varieties.
\\

\noindent
\textbf{Acknowledgements}: I would like to thank Professor Jean-Pierre Demailly for suggesting this topic, stimulating discussions and helpful suggestions. Thanks also for Professor Mattias Jonsson and Brian Lehmann for many useful discussions. I also would like to thank Professor Jixiang Fu for constant encouragement. This work is supported by China Scholarship Council.

\noindent
\textsc{Institut Fourier, Universit\'{e} Joseph Fourier-Grenoble I, 38402 Saint-Martin d'H\`{e}res, France} \\
\textsc{and}\\
\textsc{Institute of Mathematics, Fudan University, 200433 Shanghai, China}\\
\verb"Email: jian.xiao@ujf-grenoble.fr"\\

\end{document}